\newcommand{\be}{\begin{equation}}
\newcommand{\ee}{\end{equation}}
\newtheorem{teo}{Theorem}[section]
\newtheorem{lema}{Lemma}[section]
\newtheorem{prop}{Proposition}[section]
\newtheorem{defi}{Definition}[section]
\newtheorem{obs}{Remark}[section]
\newtheorem{coro}{Corollary}[section]
\begin{document}

\title[stability of periodic waves.]
{stability properties of periodic traveling waves for the
Intermediate Long Wave Equation}

\subjclass[2000]{76B25, 35Q51, 35Q53.}

\keywords{stability, periodic waves, ILW
equation, evolution models.}
\thanks{{\it Date}: March, 2015.}

\maketitle

\begin{center}

{\bf Jaime Angulo Pava}

{\scriptsize  Institute of Mathematics and Statistics, State
University of
S\~ao Paulo, S\~ao Paulo, SP, Brazil.}\\
{\scriptsize angulo@ime.usp.br}

\vspace{3mm}

{\bf Eleomar Cardoso Jr.}

{\scriptsize   Federal University of Santa Catarina,  Blumenau, SC, Brazil}\\
{\scriptsize eleomar.jr@hotmail.com
}

\vspace{3mm}

{\bf F\'abio Natali}

{\scriptsize    Department of Mathematics, State University of
Maring\'a, Maring\'a, PR, Brazil. }\\
{\scriptsize fmanatali@uem.br}

\end{center}

\begin{abstract}
In this paper we determine orbital and linear stability of a class of spatially periodic
wavetrain solutions with the mean zero property related to the Intermediate Long
Wave equation. Our arguments follow the recent developments in  \cite{andrade-pastor}, \cite{natali} and \cite{DK} for the study of the
stability of periodic traveling waves.
\end{abstract}

\section{Introduction}

One of the most fascinating  phenomena given by nonlinear
dispersive equations is the existence of solutions that maintain their shape and traveling with constant speed. Such solutions
are caused by a perfect balance between the nonlinear and dispersive
effects at the medium. In general, these solutions are called
traveling waves and it is well known that the their existence has a
very wide applications in fluid dynamics, nonlinear optics,
hydrodynamic and many other fields (see pioneers works due to Boussinesq, Benjamin, Ono,
Benjamin-Bona-Mahoney, Miura, Gardner, and Kruskal).  Then, the study
concerning the dynamics related to these solutions has became one of
the important issues of the last decades for evolutive nonlinear
partial differential equations.

We can say that the initial impetus for the scientific activity of
these profiles was the inverse scattering theory (IST) for the
Korteweg-de Vries equation (KdV-equation henceforth)
$$
u_t+u_x+ (u^2)_x+u_{xxx}=0.
$$
One of the lessons learned by the IST is that the traveling wave
with a solitary wave profile, namely, $u(x,t)=\psi(x-ct)$ with $c>0$
and
$$
\lim_{|\xi|\to +\infty} \psi(\xi)=0,
$$
plays a central role in the long-time asymptotics of solutions to the
initial-value problem associated to KdV-equation. Indeed, general
classes of initial disturbances are known to solve into a finite
sequence of solitary waves followed by a dispersive tail. A
companion result is that individual solitary waves are orbitally
stable solutions of the evolution equation. The exact theory of
stability of solitary waves for the KdV-equation started by Benjamin
in \cite{Be} (see also Bona \cite{bona1}) whose maturity was reached
 a decade ago with the works due to  Albert \cite{albert1}, Albert and Bona \cite{AB}, Albert, Bona and Henry \cite{ABH}
 and Weinstein
\cite{W}-\cite{W1}. Next, in papers  due to Strauss \textit{at al.}
and Weinstein \cite{BSS}, \cite{grillakis1}, \cite{Wo} were shown that not all solitary-wave solutions are stable. Both
necessary and sufficient conditions for stability of the traveling
waves solutions of a range of nonlinear dispersive evolution
equations appear in various of the above references.

In the last years,  the study of  stability of traveling waves of
periodic type associated with nonlinear dispersive equations has
increased significantly.  A rich variety of new mathematical
problems have emerged, as well as, the physical importance related
to them. This subject is often studied in relation to the natural
symmetries associated to the model (translation invariance and/or
rotations invariance) and to perturbations of symmetric classes,
e.g., the class of periodic functions with  the same minimal period
as the underlying wave. In the case of shallow-water wave models (or
long internal waves in a density-stratified ocean, ion-acoustic
waves in a plasma or acoustic waves on a crystal lattice), it is
well known that a formal stability theory of periodic traveling wave
has started with the pioneering work of Benjamin  \cite{benjamin1}
regarding to the periodic steady solutions called {\it cnoidal
waves} for the KdV equation. The waveform profiles were found first by Korteweg and de-Vries
 for KdV-equation. The cnoidal traveling wave solution, namely,
 $u(x,t)=\varphi_c(x-ct)$ has a profile given by
\begin{equation}\label{cnoidal1}
\varphi_c(\xi)=\beta_{2}+
(\beta_3-\beta_2)\mbox{cn}^2\left({\sqrt{\frac{\beta_3-\beta_1}{12}}\xi};k\right),
\end{equation}
where $cn(\cdot;k)$ represents the Jacobi
elliptic function called {\it cnoidal} associated with the elliptic modulus $k\in (0,1)$ and
$\beta_i$'s are real constants satisfying the classical relations
\begin{equation}\label{betas}
\beta_1<\beta_2<\beta_3, \;\; \beta_1+\beta_2+\beta_3=3c,\;\; k^2=\frac{\beta_3-\beta_2}{\beta_3-\beta_1}.
\end{equation}
We recall that $\varphi_c$ satisfies the second order differential equation
\begin{equation}\label{kdveq}
-\varphi''_c(\xi)+c \varphi_c(\xi)-\frac12 \varphi^2_c(\xi)=A_{c},\quad \xi\in\mathbb R
\end{equation}
with $A_{\varphi_c}=-\frac16\sum_{i<j}\beta_i\beta_j$, and that the
formula (\ref{cnoidal1}) is deduced from the  theory of elliptic
integrals and  elliptic  functions. The existence of smooth solutions for (\ref{kdveq}) with a minimal period $L$, $c\in
I\subset \mathbb R\to \varphi_c\in H^n_{per}([0,L])$ is determined
from the implicit function theorem. The  interval $I$ in general
depends of qualitative properties of $\varphi_c$, for instance, for
the property of mean zero, $\int_0^L \varphi_c(\xi)d\xi=0$, we have
$I=(0,+\infty)$ and for $A_{\varphi_c}=0$ and $\varphi_c(\xi)>0$ for
all $\xi\in \mathbb R$, we have $I=(\frac{4\pi^2}{L^2}, +\infty)$. A
first stability approach for the cnoidal wave profile
(\ref{cnoidal1}) was began by Benjamin in \cite{benjamin1} regarding
the stability in $H^1_{per}([0,L])$ of the orbit
\begin{equation}\label{orbit}
\Omega_{\varphi_c}=\{\varphi_c(\cdot+y): y\in \mathbb R\},
\end{equation}
by the periodic flow of the KdV equation. But only years later a
complete study was carried out by Angulo, Bona and Scialom in \cite{ABS}
(see also \cite{A}).

 Recently, Angulo and Natali in \cite{natali} (see also \cite{A}) have  established a new approach for
studying the stability of even and positive periodic traveling waves solutions associated to the  general dispersive model
\begin{equation}
u_t+2uu_x-(\mathcal{M}u)_x=0, \label{equakawa}
\end{equation}
where $\mathcal{M}$ is a differential or
pseudo-differential operator in the framework of periodic functions.
$\mathcal{M}$ is defined as a Fourier multiplier operator by
\begin{equation}\label{symbol}
\widehat{\mathcal{M}g}(n)=\theta
(n)\widehat{g}(n),\;\;\;\kappa\in \mathbb Z,
\end{equation}
where the symbol $\theta$ of $\mathcal{M}$ is assumed to be a
mensurable, locally bounded function on $\mathbb R$, satisfying the
condition
\begin{equation}\label{alpha1}
a_1|n|^{m_1}\leq \theta(n)\leq a_2 (1+|n|)^{m_2},
\end{equation}
where $m_1\leq m_2$, $|n|\geqq n_0$, $\theta(n)>b$ for all $n\in \mathbb Z$, and $a_i\geq 0$. One of the advantage of Angulo and Natali approach was the  possibility of studying non-local evolution models in a periodic framework. For instance, let us consider the case of the Benjamin-Ono equation (henceforth BO-equation)
\begin{equation}\label{BO}
u_t+uu_x-\mathcal{H}u_{xx}=0,
\end{equation}
with  $\mathcal{H}$ denoting the periodic Hilbert transform and  defined for $L$-periodic functions $f$ as
\begin{equation}\label{symbBO}
\mathcal{H}f(x)=\frac{1}{L} \text{p.v.} \int_{-L/2}^{L/2}
cot\Big[\frac{\pi(x-y)}{L}\Big]f(y)dy,
\end{equation}
where p.v. represents the Cauchy principal value of the integral, we
have that the Fourier transform of $\mathcal{H}f$ is given by the
sequence $\{\widehat{\mathcal{H}f}(n)\}_{n\in \mathbb Z}$, where
$\widehat{\mathcal{H}f}(n)=-i sgn(n) \widehat{f}(n)$. In other
words, we have that $\mathcal{M}=\mathcal{H}\partial_x$ whose symbol
is $\theta(n)=|n|$. The periodic traveling waves
$u(x,t)=\varphi_c(x-ct)$ for the BO-equation with minimal period $L$
satisfies the following non-local pseudo-differential equation
$$
\mathcal{H}\phi_c+c\phi_c-\frac12 \phi_c^2=0,
$$
and they are given by
$$
\phi_c(x)= \frac{4\pi}{L} \frac{senh(\gamma)}{cosh(\gamma)-cos(\frac{2\pi x}{L})}
$$
where $\gamma >0$ satisfies $tanh(\gamma)=\frac{2\pi}{cL}$
(therefore the wave speed  $c$ must satisfy $c>2\pi/L$). As an
application of the theory  in \cite{natali}, the authors obtained the
first nonlinear stability result for the orbit generated by the wave
$\varphi_c$.

In this paper,  we are interested in studying  the orbital and linear stability of a family
periodic traveling waves for the physically relevant Intermediate
Long Wave equation (ILW equation  henceforth),
\begin{equation}\label{ILW}
u_t+2uu_x+\delta^{-1}u_x-(\mathcal{T_\delta}u)_{xx}=0,\ \ \ \ \ \ \delta>0,
\end{equation}
with $u=u(x,t)$  a $L-$periodic function and $x,t\in\mathbb R$. The linear operator
$\mathcal{T_\delta}$ is defined by
$$
\mathcal{T_\delta}u(x)=\frac{1}{L} \text{p.v.} \int_{-L/2}^{L/2}
\Gamma_{\delta, L}(x-y) u(y)dy,
$$
where
$$
\Gamma_{\delta, L}(\xi)=-i\sum_{n\neq 0} \coth\left(\frac{2\pi
n\delta}{L}\right)e^{2in \pi \xi/L}.
$$
Actually, the physical derivation of (\ref{ILW}) in a periodic setting requires that
$$
\int_{-L/2}^{L/2} u(x)dx=0,
$$
 where we always can impose (\ref{ILW}), because any non-zero mean could be removed by the
Galilean transformation $v(x,t)=u(x+2\gamma t, t)-\gamma$, $\gamma\in \mathbb R$. Hence,  from the
theory of elliptic functions (see Ablowitz, \textit{et al.}
\cite{afss}) we obtain that
$$
\mathcal{T_\delta}u(x)=-i\sum_{n\neq 0} \coth\left(\frac{2\pi n\delta}{L}\right)\widehat{u}(n)e^{2in \pi \xi/L}.
$$
Moreover, for $\delta\to \infty$, $L$ fixed, we have (see \cite{afss})
$$
\lim_{\delta\to \infty} \Gamma_{\delta, L}(\xi)=-cot \Big( \frac{\pi
x}{L}\Big),
$$
which is the kernel of the Hilbert transform in $(\ref{symbBO})$.
Therefore, the ILW equation (\ref{ILW}) is the natural periodic
extension of the BO-equation (\ref{BO}). We note that  the ILW equation is an example of the class of dispersive
models (\ref{equakawa}) with exactly $\mathcal
M_\delta=\mathcal{T_\delta}\partial_x-\frac{1}{\delta}$.

Now, one of our main objectives in this paper, it will be   to find periodic solutions for (\ref{ILW}) of the form
$u(x,t)=\varphi_c(x-ct)$ with the periodic profile $\varphi_c$
having an mean zero and satisfying \be\label{travkdv}
-c\varphi_c+\varphi_c^2-\mathcal{M}_{\delta}\varphi_c=A_c, \ee
 where $A_c$  will be an integration constant given by $A_c=\frac{1}{L}\int_0^L\varphi_c^2(x)dx$.
 In section 3 we obtain, as a consequence of Theorem \ref{exis},  the following property associated  to the pseudo-differential equation (\ref{travkdv}):

\begin{itemize}
\item[$(P0)$] There is a smooth curve of even periodic solutions for (\ref{travkdv})
with the mean zero property, in the form
$$
c\in I\subset \mathbb R\mapsto\varphi_{c}\in H_{per}^n([0,L]),\ \
n\in \mathbb N,
$$
all of them with the same minimal period $L>0$.
\end{itemize}

By following the  arguments due to Parker \cite{parker} (see also
Nakamura and Matsuno in \cite{NM}), we obtain the following  formula of
even periodic solution for  (\ref{travkdv}) with the mean zero
property (see section 3 below),
\begin{equation}\label{solilw}
\varphi_c(x):=\varphi_c(L,\delta,k;x)
=\displaystyle\frac{2K(k)i}{L}\displaystyle\left[Z\displaystyle
\left(\displaystyle\frac{2K(k)}{L}(x-i\delta);k\right)-Z\displaystyle
\left(\displaystyle\frac{2K(k)}{L}(x+i\delta);k\right)\right],\ \
\end{equation}
where $K(k)$ denotes the complete elliptic integral of the first
kind, $Z$ is the Jacobi Zeta Function and $k\in(0,1)$ (see notation section below). For fixed $L$
and $\delta$, the wave-speed $c$ and the elliptic modulus $k$ must
satisfy specific restrictions.

Other one focus of our  study, it will be  the dynamic
of solutions of the ILW equation initially close to the
mean-zero profile $\varphi_c$ in (\ref{solilw}), the stability of the profile  $\varphi_c$. There are two
common approaches to the stability question. Firstly, we can
analyze the nonlinear initial-value problem governing the difference
between an arbitrary solution of the ILW equation and a given exact
solution representing a wavetrain, the profile $\varphi_c$. In the
first approximation, we assume that the difference is
small and we linearize the evolution equation. The resulting linear equation can be
studied in an appropriate frame of reference by a spectral approach.
To our knowledge, the linearized spectral approach has  never been established
 for the ILW equation. A second approach to stability is
the  orbital stability, more exactly, we study the Lyapunov stability
property of the orbit
\begin{equation}\label{orbit1}
\Omega_{\varphi_c}=\{\varphi_c(\cdot+y): y\in \mathbb R\},
\end{equation}
generated by the profile  $\varphi_c$. The study of the dynamic of the set $\Omega_{\varphi_c}$ consist  in
verifying that for any initial condition $u_0$  close to
$\Omega_{\varphi_c}$ we have that the solution $u(t)$ of
$(\ref{ILW})$ with $u(0)=u_0$ remains close to $\Omega_{\varphi_c}$
for all values of $t\in\mathbb{R}$. The specific notion of ``close''
is based in terms of the following pseudo-metric defined on a
determined space $W$, namely,  for $f,g\in W$,
\begin{equation}\label{dis}
d_2(f,g)=\inf_{r\in \mathbb R}\|f- \tau_r g\|_W,
\end{equation}
with $\tau_rh(x)=h(x+r)$. The translation symmetry $\tau$ enables us to form a quotient
space, $W/\tau$, by identifying the translations $\tau f$ of each
$f\in W$. If we consider $f$ and $g$ as elements of $W/\tau$, we obtain
that $d_2$ represents a well-defined  {\it metric} on this set. Note that in
$W/\tau$ the difference $u-\varphi_c$, between $\varphi_c$ and the
perturbed solution $u$, it will represent the most vital difference between
two wave forms, namely, the {\it shape}. Again, according to our best knowledge, the
orbital stability property associated to the profile $\varphi_c$ in (\ref{solilw}) has  never been established for the ILW equation in a periodic setting.

Next, we shall give a brief explanation of our work. In fact, let us consider the new variable
$$
v(x,t)=u(x+xt,t)-\varphi_c(x),
$$
where $u$ solves $(\ref{ILW})$ and $\varphi_c$ solves $(\ref{travkdv})$. Substituting this form in equation $(\ref{ILW})$ and by using
(\ref{travkdv}) one finds that $v$ satisfies the nonlinear equation
\be\label{vequ}
v_t+2vv_x+2(v\varphi_c)_x-cv_x-\mathcal{M}_{\delta}v_x=0. \ee As a
leading approximation for small perturbation, we replace
(\ref{vequ}) by its linearization about $\varphi_c$, and hence
obtain the linear equation \be\label{vlinear}
v_t=\partial_x(\mathcal{M}_{\delta} v +cv -2v\varphi_c). \ee Since
$\varphi_c$ depends only on $x$, the equation
(\ref{vlinear}) admits treatment by separation of variables, which
leads naturally to a  spectral problem. Then,  by seeking particular solutions
of (\ref{vlinear}) of the form $v(x,t)=e^{\lambda t} \psi(x)$, where
$\lambda \in \mathbb C$, $\psi$ satisfies  the linear problem
\be\label{vlinear2}
\partial_x\mathcal{L}\psi=\lambda \psi,
\ee
for $\mathcal{L}:=\mathcal{L}_{c,\delta}$ denoting  the self-adjoint operator
 \be\label{linop}
\mathcal{L}_{c,\delta}:=\mathcal{M}_{\delta}+c-2\varphi_c. \ee We
recall that the complex growth rate $\lambda$ appears as (spectral)
parameter. Equation (\ref{linop}) will only have a nonzero solution
$\psi$ in a given Banach space $Y$ for certain $\lambda \in \mathbb
C$. A necessary condition for the stability of $\varphi_c$ is that
there are not  points $\lambda$ with $\mbox{Re}(\lambda)>0$ (which
would imply the existence  of a solution $v$ of (\ref{vlinear}) that
lies in $Y$ as a function of $x$ and grows exponentially in time).
If we denoted  by $\sigma$ the spectrum of $\partial_x\mathcal{L}$,
the later discussion suggests the utility of the following
definition:

\begin{defi} (spectral stability and instability)
\label{defspe} A periodic traveling wave solution $\varphi_c$ of the
ILW equation (\ref{ILW}) is said to be spectrally stable if
$\sigma\subset i\mathbb R$. Otherwise (i.e., if $\sigma$ contains
point with $\mbox{Re}(\lambda)>0$) $\varphi_c$ is  {\it spectrally
unstable}.
\end{defi}

We recall that as (\ref{vlinear}) is a real Hamiltonian equation, it
forces certain elementary symmetries on the spectrum of $\sigma$,
more exactly, $\sigma$ is symmetric with respect to reflection in
the real and imaginary axes. Therefore, it implies that
exponentially growing perturbation are always paired with
exponentially decaying ones. It is the reason by which was only
required in  Definition \ref{defspe} that the spectral parameter
$\lambda$ satisfies that $\mbox{Re}(\lambda)>0$.

An similar spectral problem to (\ref{vlinear2}) has been the focus
of many research studies recently. For instance, if we restrict initially to
traveling wave solution of solitary wave type, sufficient conditions
in order to get the linear stability/instability  has been
established for many specific dispersive equations in
Kapitula and Stefanov \cite{kap}, in particular, the linear
stability related to the generalized Korteweg-de Vries equation
\begin{equation}\label{gkdv}
u_t+(p+1)u^pu_x+u_{xxx}=0\qquad p\in\mathbb N,
\end{equation}
 was obtained by using a Krein-Hamiltonian instability index to count the number of negative
eigenvalues with a positive real part.  In the case of linear instability, Lin in  \cite{lin} and Lopes in \cite{lopes} have
presented sufficient conditions  for general dispersive models.

\indent In a periodic framework, general spectral problem of the form
$$
J\mathcal L\psi=\lambda \psi
$$
 has emerged, with $J=\partial_x$ and $\mathcal L$ a self-adjoint operator. Since $J$ is not a one-to-one operator, classical linear stability results as in \cite{grillakis1}  can not be applied. To overcome this difficult, recently Deconinck and Kapitula in \cite{DK} (see also Haragus and Kapitula \cite{haragus})
considered  the similar problem
\be\label{modspecp1}
J\mathcal{L}\big|_{H_0}\psi=\lambda \psi,
 \ee
in the closed subspace of mean zero,
\begin{equation}\label{zero}
H_0=\left\{f\in L^2([0,L]);\ \int_0^Lf(x)dx=0\right\}.
\end{equation}
Thus,  an specific Krein-Hamiltonian index formula was deduced for concluding the linear stability of periodic profile with a mean zero property. In particular, it was  deduced the linear
stability of periodic  traveling waves of  cnoidal type associated with the equation
$(\ref{gkdv})$ for $p=2$ (we also refer the reader to see Bronski, Johnson and Kapitula in \cite{BJK}  and Deconinck and Nivala in \cite{DK}). We note, nevertheless, that for obtaining this specific result was necessary to know the periodic wave profile as well as the
knowledge  of a specific quantity of  eigenvalues associated to the Lam\'e problem
$$
-\Phi''+6k^2sn^2(x;k)\Phi=\theta \Phi.
$$
 Unfortunately, in our problem (\ref{vlinear2}), this specific type of information can not be established.

We note that the spectral/orbital stability properties of periodic traveling waves in Hamiltonian equations that are first-order in time (e.g. the Korteweg-de Vries or the Schr\"odinger equations) have been very  well-studied in recent years by using different approaches to those discussed above. See, for instance, Bronski and Johnson \cite{BJ}, Bronski, Johnson and Kapitula \cite{BJK}-\cite{BJK2}, Bronski, Johnson and Zumbrun \cite{BJZ}, Deconinck and Kapitula \cite{DK1}, Deconinck and Nivala \cite{DN}, Haragus and Kapitula \cite{haragus}, Hur and Johnson \cite{HJ}, Jonhson \cite{J2}-\cite{J1} and Kapitula and Promislow \cite{KP}.

In section 5 below, we use the approaches in Angulo and Natali
\cite{AN1}, Deconinck and Kapitula \cite{DK} and Haragus and Kapitula
\cite{haragus} for establishing the relevant result that the
periodic profile $\varphi_c$ in (\ref{solilw}) for the ILW equation
are linearly  stable. By techniques reasons, we establish it result for $c$ being strictly positive (see Remarks  \ref{restri} and \ref{numeric} below).

Now, some informations for obtaining our  linear stability result  in section 5 for $\varphi_c$  in (\ref{solilw}) can be  used in order to conclude the orbital stability property of these periodic waves. Moreover, it property will be established for every admissible  speed-wave $c$. Our approach,  it will follow from
a slight adaptation of the classical Lyapunov stability analysis established  by Andrade and Pastor in \cite{andrade-pastor}. In our case, the stability analysis will be based on the elliptic modulus $k$ instead of $c$, such as is standard in the classical literature, therefore we need establish a stability framework adapted to this new ``speed-wave''. The
energy space where the orbital stability property of the profile $\varphi_c$ will be studied, it  is the following Hilbert-space,
\begin{equation}\label{W}
\mathcal{W}=\left\{g\in L_{per}^2([0,L]);\
||g||_{\mathcal{W}}:=\left(\sum_{m=-\infty}^{+\infty}[1+\theta_{\delta}(m)]|\widehat{g}(m)|^2\right)^{\frac{1}{2}}<\infty\right\},
\end{equation}
where $\theta_{\delta}$ indicates the symbol associated with
$\mathcal{M}_{\delta}$.  In section 6, we briefly describe the main arguments for obtaining our orbital result of the profile $\varphi_c$
by  the periodic flow of the ILW-equation.\\
\indent Our paper is organized as follows. In section 2 we present notation and  the definition of the Jacobi elliptic functions. Section 3 is devoted
to the existence of periodic waves having the mean zero property. In
section 4, we present the required spectral property associated with
the linear operator $(\ref{linop})$ by following the arguments in
\cite{natali}. In section 5, the linear stability of the periodic profile $\varphi_c$  will
be shown. To the end, in section 6 we establish our  orbital stability  result.

 \section{Notation}

 For $k\in(0,1)$, we define the \textit{normal elliptic integral of the first kind},
 $$
u(x;k)=\int\limits_{0}^{x}\frac{dt}{\sqrt{(1-t^{2})(1-k^{2}t^{2})}}=\int\limits_{0}^{\varphi}\frac{d\theta}{\sqrt{1-k^{2}sin^{2}\theta}}=F(\varphi;k)
$$
with $x=sin \varphi$. The number $k$ and $\varphi$ are called  the  \textit{modulus} and the \textit{argument}, respectively.  For $x=1$ ($\varphi=\frac{\pi}{2}$), the integral above is said to be \textit{complete}. In this case, ones writes :
$$
K(k)=\int\limits_{0}^{1}\frac{dt}{\sqrt{(1-t^{2})(1-k^{2}t^{2})}}=\int\limits_{0}^{\frac{\pi}{2}}\frac{d\theta}{\sqrt{1-k^{2}sin^{2}\theta}}.
$$
Hence, $K(0)=\frac{\pi}{2}$ and $K(1)=+\infty$. For $k$ fixed,  $u=u(x;k)$ is a strictly increasing function of variable $x$ (real). We define its inverse function by $x\equiv sn(u;k)$ (\textit{snoidal} function).  Then, we obtain the basic Jacobian elliptic functions \textit{cnoidal} and \textit{dnoidal}, defined by
$cn(u;k)\equiv\sqrt{1-sn^{2}(u;k)}$ and $dn(u;k)\equiv\sqrt{1-k^{2}sn^{2}(u;k)}$
 (see Byrd and Friedman \cite{byrd} and Abramowitz and Segun \cite{abramo}).  Snoidal, cnoidal,  and  dnoidal have fundamental period $4K(k)$, $4K(k)$ and $2K(k)$, respectively. Moreover,  $sn^{2}(u;k)+cn^{2}(u;k)=1$,   $k^2sn^{2}(u;k)+dn^{2}(u;k)=1$, $sn(u;0)=sin(u)$, $cn(u; 0)=cos(u)$, $sn(u ;1)=tanh(u)$ and $cn(u ;1)=dn(u,1)=sech(u)$.  The Zeta Jacobi function, $Z(u)=Z(u,k)$, it is  defined for $u\in \mathbb R$ by
$$
Z(u)=\int\limits_{0}^{u}\bigg[dn^{2}(x;k)-\frac{E(k)}{K(k)} \bigg]dx.
$$
It is a function which  is odd with fundamental period $2K(k)$. Moreover, $Z(\pi/2, k)=0$ and $Z(mK)=0$, para $m=0,1,2,...$.  For $u$ being a complex argument we refer the reader to formula 143.01 in \cite{byrd}. In particular for $u=ix$, $x\in \mathbb R$ we obtain
$$
Z(ix,k)=i\frac{sn(x; k')}{cn(x; k')} dn(x; k')-iZ(x, k ')-i\frac{\pi x}{2K(k)K(k')},
$$
with $k'=\sqrt{1-k^2}$.

\section{Existence of Periodic Waves.}

\setcounter{equation}{0}
    \setcounter{defi}{0}
    \setcounter{teo}{0}
    \setcounter{lema}{0}
    \setcounter{prop}{0}
    \setcounter{coro}{0}

 This section is devoted to establish the property $(P0)$ defined in the introduction, more exactly, we construct a smooth curve of periodic waves with the mean zero property,
$c\in I\mapsto\varphi_{c}\in H_{per}^s([0,L])\cap H_0$,  where the period $L>0$ and the velocity $c$ will have some specific restrictions. Our arguments will follow Hirota's method, put forward in the works \cite{NM}  and \cite{parker}. By convenience of the reader and from our stability approach to be established in sections 5 and 6, we will review slightly the method.

Indeed, let us assume the existence of
$f:\mathbb{C}\times\mathbb{R}\rightarrow\mathbb{C}$, such that
the profile
$$
u(x,t)=i\displaystyle\frac{\partial}{\partial
x}\displaystyle\left[\ln\displaystyle\left(\displaystyle\frac{f(x+i\delta,t)}{f(x-i\delta,t)}\right)\right],\
(x,t)\in\mathbb{R}\times \mathbb{R},
$$
it will  satisfy equation $(\ref{ILW})$, with $f(\cdot, t)$ being analytic in a specific rectangle $R$ of the complex-plane. To simplify the notation, we  define $f_{+}(x,t)=f(x+i\delta,t)$ and $f_{-}(x,t)=f(x-i\delta,t)$. So, by  arguments in \cite{parker},  there is a constant
$B$, such that we have the bilinear equation
\begin{eqnarray}\label{ilw.25.1}\displaystyle\left[iD_t+\displaystyle\frac{i}{\delta}D_x-D_x^2+B\right]f_{+}\cdot
f_{-}=0,\end{eqnarray}
with
 $$D_t^mD_x^na(x,t)\cdot
b(x,t):=\displaystyle\left.(\partial_t-\partial_{t'})^m(\partial_x-\partial_{x'})^na(x,t)b(x',t')\right|_{x=x',\
t=t'}.$$ In addition, we can deduce from (\ref{ilw.25.1}) that
\begin{eqnarray}\label{ilw.25.2}F(D_t,D_x)f\cdot f=0,\end{eqnarray} where $$F(D_t,D_x)\equiv i\displaystyle\left(D_t+\displaystyle\frac{1}{\delta}D_x\right)\sinh(i\delta D_x)+(D_x^2-B)\cosh(i\delta D_x).$$

Consider $z=px+w t$, where $p,w\in\mathbb{R}$ will be determined later.
Suppose that $f$ has the following Jacobi Theta profile (see \cite{abramo})
$$
f(x,t)\equiv \theta_3(z,q):=1+2\displaystyle\left[\displaystyle\sum_{n=1}^{+\infty}q^{n^2}\cos(2nz)\right]=\displaystyle\sum_{n=-\infty}^{+\infty}q^{n^2}e^{2inz},
$$
for $q=e^{i\pi\tau}$ with $\tau=i\frac{K(k')}{K(k)}$, where $K'(k)\equiv K(\sqrt{1-k^2})$ is the associated elliptic integral of the first kind. In general $q=q(\tau)$ is the function called
``nome''  with $\textrm{Im}(\tau)>0$. By substituting
$f$ at the identity (\ref{ilw.25.2}), one has
$$\widetilde{F}_0\theta_3(2z,q^2)+\widetilde{F}_1q^{-\frac{1}{2}}\theta_2(2z,q^2)=0.$$
Here, $\theta_2$ represents the  Jacobi Theta function of second kind. Moreover, one has $$\widetilde{F}_m=\displaystyle\sum_{n=-\infty}^{+\infty}F[2i(2n-m)w,2i(2n-m)p]q^{n^2+(n-m)^2},\ \
m=0,1.$$

In order to prove that $f(x,t)=\theta_3(z,q)$ is a periodic solution related to the equation
(\ref{ILW}), it is enough to prove that
$\tilde{F}_0=\tilde{F}_1=0$. To do so, it suffices to show that
\begin{eqnarray}\label{ilw.25.2.0}\displaystyle\frac{1}{\delta}\displaystyle\left(w+\displaystyle\frac{p}{\delta}\right)A_0'-\displaystyle\frac{p^2}{\delta^2}A_0''-A_0B=0\
\ \textrm{e}\ \
\displaystyle\frac{1}{\delta}\displaystyle\left(w+\displaystyle\frac{p}{\delta}\right)A_1'-\displaystyle\frac{p^2}{\delta^2}A_1''-A_1B=0,\
 \ \end{eqnarray} where
\begin{eqnarray*}A_0=A_0(p;q,\delta)=\displaystyle\sum_{n=-\infty}^{+\infty}q^{2n^2}\cosh(4np\delta)=\theta_3(2ip\delta,q^2),\end{eqnarray*}
\begin{eqnarray*}A_1=A_1(p;q,\delta)=\displaystyle\sum_{n=-\infty}^{+\infty}q^{n^2+(n-1)^2}\cosh[2(2n-1)p\delta]=q^{\frac{1}{2}}\theta_2(2ip\delta,q^2)\end{eqnarray*}
and $A_i'$, $i=0,1$, represent the derivative of the parameters $A_0$ e $A_1$ with respect to $p$, respectively. Next, we fix parameters $p$, $q$ and $\delta$ above. Solving the system in (\ref{ilw.25.2.0}) we get
\begin{eqnarray*}B=B(p;q,\delta)=\displaystyle\frac{p^2}{\delta^2}\cdot\displaystyle\frac{A_0'A_1''-A_0''A_1'}{A_0A_1'-A_0'A_1}\end{eqnarray*}
and
\begin{eqnarray*}w=w(p;q,\delta)=-\displaystyle\frac{p}{\delta}+\displaystyle\frac{p^2}{\delta}\cdot\displaystyle\frac{A_0A_1''-A_0''A_1}{A_0A_1'-A_0'A_1}=-\displaystyle\frac{p}{\delta}+\displaystyle\frac{p^2}{\delta}\cdot\displaystyle\frac{\partial}{\partial
p}\displaystyle\left\{\ln[W(A_0,A_1)]\right\},\end{eqnarray*} where
$W(A_0,A_1)=A_0A_1'-A_0'A_1$ indicates the Wronskian of $A_0$ and $A_1$. Now, if we use some standard identities concerning the Jacobi elliptic functions (see \cite{abramo} and \cite{byrd}), we deduce that $f(x,t)=\theta_3(z,q)$ must satisfy the identity
(\ref{ilw.25.2}) provided that
\begin{eqnarray*}B=B(p;q,\delta)=-p^2\displaystyle\left[\displaystyle\frac{\theta_1''(2ip\delta,q)}{\theta_1(2ip\delta,q)}-\displaystyle\frac{\theta_1'''(0,q)}{\theta_1'(0,q)}\right]\end{eqnarray*}
and
\begin{eqnarray*}w=w(p;q,\delta)=-\displaystyle\frac{p}{\delta}+2ip^2\cdot\displaystyle\frac{\theta_1'(2ip\delta,q)}{\theta_1(2ip\delta,q)},
\end{eqnarray*}
where $\theta_1$ represents the  Jacobi Theta function of first kind.

Now, similar arguments can be used if one considers the slight change of variables $z\mapsto\displaystyle\frac{z}{2}$. In this case we see that
\begin{eqnarray}\label{ilw.25.2.2}B=B(p;k,\delta)=-\displaystyle\frac{p^2}{4}\cdot\displaystyle\left[\displaystyle\frac{\theta_1''(ip\delta,q(k))}{\theta_1(ip\delta,q(k))}-\displaystyle\frac{\theta_1'''(0,q(k))}{\theta_1'(0,q(k))}\right]\end{eqnarray}
and
\begin{eqnarray}\label{ilw.25.2.1}w=w(p;k,\delta)=-\displaystyle\frac{p}{\delta}+ip^2\cdot\displaystyle\frac{\theta_1'(ip\delta,q(k))}{\theta_1(ip\delta,q(k))},\end{eqnarray}
where $k\in (0,1)$, $k'=\displaystyle\sqrt{1-k^2}$, and
$q(k)=e^{-\frac{\pi K(k')}{K(k)}}$.

Hence, we obtain that our hypothetic  solution $u$ becomes
\begin{eqnarray}\label{ilw.25.3}u(x,t)&=&i\displaystyle\frac{\partial}{\partial
x}\displaystyle\left\{\ln\displaystyle\left[\displaystyle\frac{\theta_3\displaystyle\left(\frac{1}{2}(z-ip\delta),q(k)\right)}{\theta_3\displaystyle\left(\frac{1}{2}(z+ip\delta),q(k)\right)}\right]\right\}\nonumber\\
\\
&=&\displaystyle\frac{ip}{2}\cdot\displaystyle
\left[\displaystyle\frac{\theta_3'\displaystyle\left(\frac{1}{2}(z-ip\delta),q(k)\right)}
{\theta_3\displaystyle\left(\frac{1}{2}(z-ip\delta),q(k)\right)}-
\displaystyle\frac{\theta_3'\displaystyle\left(\frac{1}{2}(z+ip\delta),q(k)\right)}{\theta_3\displaystyle\left(\frac{1}{2}(z+ip\delta),q(k)\right)}
\right],
\nonumber\end{eqnarray}
it which represents a $L$-periodic function at the spatial variable with the natural choice of $p=2\pi/L$.

Next, we obtain specific restrictions on the parameter $p, k$ and the minimal period $L$ for $u$ to be a smooth periodic function. Indeed, for $k\in(0,1)$ fixed, it is well known that  the  theta function $\theta_3(z,q(k))$ has simple zeros at the points
$$z=\displaystyle\left(m+\displaystyle\frac{1}{2}\right)\pi+\displaystyle\left(n+\displaystyle\frac{1}{2}\right)\pi\tau, \quad m,n\in\mathbb{Z}.
$$
So, the right-hand side of $(\ref{ilw.25.3})$ possess infinitely many isolated singularities which we need to avoid. To overcome this situation, it makes necessary to impose a convenient condition over the parameters $p$, $\delta$ and $k$, namely,
\begin{eqnarray}\label{ilw.25.4}0<p\delta<-i\pi\tau=\pi\displaystyle\frac{K(k')}{K(k)},
\end{eqnarray}
$k'=\sqrt{1-k^2}$. To do so, it suffices to consider $k\in(0,1)$ satisfying
\begin{equation}\label{ilw.4}
v(L,\delta,k):=\displaystyle\frac{2\delta}{L}\cdot\displaystyle\frac{K(k)}{K(k')}<1.
\end{equation}

Our next step is to present a convenient formula for the solution $u$. Consider the parameters $B$ and $w$ satisfying condition in (\ref{ilw.25.2.2}) and (\ref{ilw.25.2.1}), respectively, then by using formula 16.43.3-\cite {abramo} in (\ref{ilw.25.3}) one has
\begin{eqnarray}\label{ilw.5.1}
u(x,t)&=&\displaystyle\frac{iK(k)p}{\pi}
\displaystyle\left[Z\displaystyle\left(\displaystyle\frac{K(k)}{\pi}(z-ip\delta);k\right)
-Z\displaystyle\left(\displaystyle\frac{K(k)}{\pi}(z+ip\delta);k\right)\right]\\
&=&\displaystyle\frac{2K(k)i}{L}\displaystyle\left[Z\displaystyle\left(\displaystyle\frac{2K(k)}{L}(x-ct-i\delta);k\right)-Z\displaystyle\left(\displaystyle\frac{2K(k)}{L}(x-ct+i\delta);k\right)\right],
\end{eqnarray}
para $c:=-\displaystyle\frac{w}{p}$.  Therefore, identity
$(\ref{ilw.5.1})$ determines a class of $L-$periodic functions which
solves the ILW equation (\ref{ILW}) with speed-wave $c$. Here, $Z$
represents the periodic Jacobi Zeta Function (see section 2 above).

Next, we will determined an expression for  $c$.  Indeed, from the analysis above we obtain that
\begin{eqnarray*}c:=-\displaystyle\frac{w}{p}=\displaystyle\frac{1}{\delta}-ip\cdot \displaystyle\frac{\theta_1'(ip\delta,q(k))}{\theta_1(ip\delta,q(k))}=\displaystyle\frac{1}{\delta}-\displaystyle\frac{2\pi
i}{L}\cdot\displaystyle\frac{\theta_1'\displaystyle\left(\displaystyle\frac{2\pi\delta
i}{L},q(k)\right)}{\theta_1\displaystyle\left(\displaystyle\frac{2\pi\delta
i}{L},q(k)\right)}\cdot\end{eqnarray*} Thus, if we use formula
16.34.1 in \cite{abramo}, we get
\begin{eqnarray}\label{ilw.26}
c=\displaystyle\frac{1}{\delta}-\displaystyle\frac{4iK(k)}{L}\cdot
\displaystyle\left[Z\displaystyle\left(\displaystyle\frac{4i\delta
K(k)}{L};k\right)+\displaystyle\frac{\textrm{cn}\displaystyle\left(\displaystyle\frac{4i\delta
K(k)}{L};k\right)\cdot
\textrm{dn}\displaystyle\left(\displaystyle\frac{4i\delta
K(k)}{L};k\right)}{\textrm{sn}\displaystyle\left(\displaystyle\frac{4i\delta
K(k)}{L};k\right)}\right],
\end{eqnarray}
with $sn, cn, dn$ denoting the Jacobi elliptic functions {\it
snoidal, cnoida} and {\it dnoidal}, respectively
(see section 2 above). Hence, for $\xi=x-ct$ in (2.10) we
obtain the periodic traveling wave solution $\varphi_c$  in
(\ref{solilw}) for the ILW equation. Moreover, by construction one
has that $\varphi_c\in H_0$.

Next, by using formula 143.01 in \cite{byrd},  we can rewrite the profile $\varphi_c$ in function of the Jacobi elliptic functions snoidal, cnoida, and  dnoidal:
\begin{eqnarray}\label{ilw.30}
&&\varphi_c(x)=-\displaystyle\frac{4K(k)}{L}\cdot
Z\displaystyle\left(\displaystyle\frac{2K(k)\delta}{L};k'\right)-\displaystyle\frac{4\delta\pi}{L^2}\cdot\displaystyle\frac{K(k)}{K(k')}\\
\nonumber\\
&&+\displaystyle\frac{4K(k)}{L}\cdot\displaystyle\frac{\textrm{dn}^2\displaystyle\left(\displaystyle\frac{2K(k)x}{L};k\right)\cdot
\textrm{cn}\displaystyle\left(\displaystyle\frac{2K(k)\delta}{L};k'\right)
\cdot
\textrm{sn}\displaystyle\left(\displaystyle\frac{2K(k)\delta}{L};k'\right)
\cdot
\textrm{dn}\displaystyle\left(\displaystyle\frac{2K(k)\delta}{L};k'\right)}{1-
\textrm{dn}^2\displaystyle\left(\displaystyle\frac{2K(k)x}{L};k\right)\cdot
\textrm{sn}^2\displaystyle\left(\displaystyle\frac{2K(k)\delta}{L};k'\right)}.\nonumber
\end{eqnarray}
Figure 1 below, it shows the profile $ \varphi_c$ with some specific parameters of $L, \delta$ and $k$.

\begin{figure}[!htb]
\includegraphics[width=5cm,height=5cm]{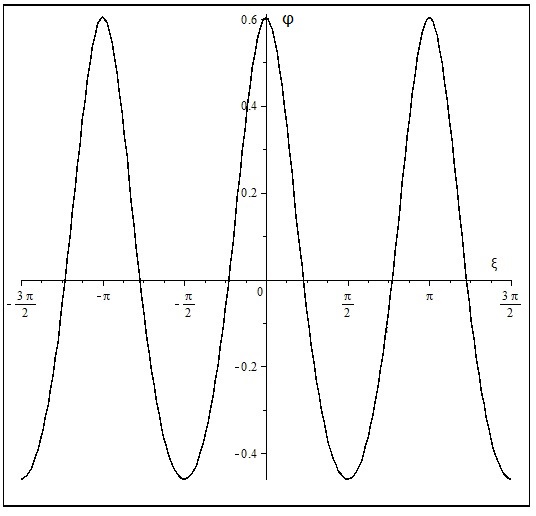}
\caption{\small Function $\varphi_{c}$ in
(\ref{ilw.30}) with $L=\pi$, $\delta=1$ and $k=0.5$.}
\end{figure}
Moreover, by using formulas 143.02, 161.01 and 120.02 in
\cite{byrd} at the identity (\ref{ilw.26}) one arrives to the  convenient formula for $c=c(k)$,
\begin{equation}\label{ilw.29}\begin{array}{llll}c&=&\displaystyle\frac{1}{\delta}-\displaystyle\frac{8\pi\delta K(k)}{L^2K(k')}-\displaystyle\frac{4K(k)}{L}\cdot Z\displaystyle\left(\displaystyle\frac{4\delta K(k)}{L};k'\right)\\
\\
&-&\displaystyle\frac{4K(k)}{L}\cdot\displaystyle\frac{\textrm{cn}\displaystyle\left(\displaystyle\frac{4\delta
K(k)}{L};k'\right)\cdot
\textrm{dn}\displaystyle\left(\displaystyle\frac{4\delta
K(k)}{L};k'\right)}{\textrm{sn}\displaystyle\left(\displaystyle\frac{4\delta
K(k)}{L};k'\right)}.\end{array}\end{equation}

Lastly, it follows immediate from  condition (\ref{ilw.4}) that for
$L$ and $\delta$ fixed there is an interval $(0, k_1)\subset (0,1)$,
with $k_1=k_1(L,\delta)$, such that $v(L, \delta, k)<1$ for all
$k\in (0, k_1)$. Therefore, we have the following existence result
of periodic traveling wave for the ILW equation by depending of the elliptic modulus $k$.

\begin{teo}\label{exis} For $L$ and $\delta$ fixed, there is $k_1 \in (0,1)$ such that for $c=c(k)$ defined in (\ref{ilw.29}) we have that the following smooth mapping
\begin{equation}\label{curve}
k\in (0, k_1)\to \varphi_{c(k)}\in H_{per}^n([0,L])\cap H_0\qquad n\in \mathbb N,
\end{equation}
it is well defined. Moreover, for every $k\in (0, k_1)$ we obtain that $\varphi_{k}=\varphi_{c(k)}$ satisfies (\ref{travkdv}) with $A_c=A(k)=\frac{1}{L}\int_0^L \varphi_{k}^2(x)dx$.
\end{teo}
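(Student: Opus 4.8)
The Hirota computation carried out above already produces the candidate family: with $p=2\pi/L$, the profile $\varphi_{c(k)}$ is the function in $(\ref{solilw})$ (equivalently $(\ref{ilw.30})$) and the wave speed is $c(k)$ given by $(\ref{ilw.29})$. The plan is to (a) fix the admissible modulus range by making the definition of $k_1$ precise, (b) verify that $k\mapsto\varphi_{c(k)}$ is a smooth curve into $H^n_{per}([0,L])$, (c) check that $\varphi_{c(k)}\in H_0$, and (d) recover the profile equation $(\ref{travkdv})$ together with the value of $A_c$. For (a), set $v(L,\delta,k)=\tfrac{2\delta}{L}\cdot\tfrac{K(k)}{K(k')}$ as in $(\ref{ilw.4})$. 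Since $k\mapsto K(k)$ is continuous and strictly increasing on $(0,1)$ while $k\mapsto K(k')=K(\sqrt{1-k^2})$ is continuous and strictly decreasing, $v(L,\delta,\cdot)$ is continuous and strictly increasing, with $v(L,\delta,k)\to 0$ as $k\to 0^{+}$ (because $K(0)=\pi/2$ and $K(k')\to K(1)=+\infty$) and $v(L,\delta,k)\to+\infty$ as $k\to 1^{-}$. Hence there is a unique $k_1=k_1(L,\delta)\in(0,1)$ with $v(L,\delta,k_1)=1$, and $v(L,\delta,k)<1$ for $k\in(0,k_1)$; by the computation preceding $(\ref{ilw.4})$, for such $k$ this is exactly the strip condition $0<p\delta<\pi K(k')/K(k)$ of $(\ref{ilw.25.4})$ with $p=2\pi/L$.

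For (b), I would observe that on $(0,k_1)$ the inequality $v<1<2$ forces every denominator occurring in the explicit formulas to be zero-free: the theta zeros $z=(m+\tfrac12)\pi+(n+\tfrac12)\pi\tau$ of $\theta_3(\cdot,q(k))$ stay off the real translates appearing in $(\ref{ilw.25.3})$; the argument $\tfrac{2K(k)\delta}{L}$ lies strictly between $0$ and $K(k')$, so $0<\mathrm{sn}^2\big(\tfrac{2K(k)\delta}{L};k'\big)<1$ and the denominator $1-\mathrm{dn}^2\big(\tfrac{2K(k)x}{L};k\big)\,\mathrm{sn}^2\big(\tfrac{2K(k)\delta}{L};k'\big)$ in $(\ref{ilw.30})$ is bounded below by a positive constant, uniformly in $x$ and uniformly in $k$ on compact subintervals of $(0,k_1)$; and $\mathrm{sn}\big(\tfrac{4\delta K(k)}{L};k'\big)\neq 0$ in $(\ref{ilw.29})$ since $\tfrac{2\delta K(k)}{LK(k')}=v\in(0,1)$. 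Consequently $\varphi_{c(k)}(x)$ is jointly real-analytic in $(k,x)\in(0,k_1)\times\mathbb R$ and $L$-periodic in $x$, and $c(k)$ is real-analytic in $k$; by the usual argument (real-analyticity of a jointly analytic, periodic map into each $C^m([0,L])$, hence into $H^n_{per}([0,L])$) the map in $(\ref{curve})$ is well defined and real-analytic, in particular smooth.

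For (c), write $f_{+}(x)$ and $f_{-}(x)$ for the numerator and denominator theta-factors in $(\ref{ilw.25.3})$ with $z=px$, $p=2\pi/L$; since $q(k)$ is real one has $f_{-}(x)=\overline{f_{+}(x)}$ for real $x$, so $i\log(f_{+}/f_{-})=2\,\mathrm{Im}\log f_{+}$ and $\varphi_{c(k)}=2\,\partial_x\big(\mathrm{Im}\log f_{+}\big)$. By $(\ref{ilw.4})$ the closed curve $x\in[0,L]\mapsto f_{+}(x)$ avoids $0$, so its winding number about $0$ is an integer depending continuously on $k\in(0,k_1)$, hence constant, and equal to its value for small $k$, where $q(k)$ is small and $f_{+}\approx 1$, namely $0$; thus $\mathrm{Im}\log f_{+}$ is a genuine $L$-periodic function and $\int_0^L\varphi_{c(k)}(x)\,dx=0$, i.e. $\varphi_{c(k)}\in H_0$. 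For (d), $u(x,t)=\varphi_{c(k)}(x-c(k)t)$ solves $(\ref{ILW})$ by the construction above; inserting this traveling-wave ansatz into $(\ref{ILW})$ and using $\mathcal M_\delta=\mathcal T_\delta\partial_x-\tfrac1\delta$ yields $-c\varphi_c'+(\varphi_c^2)'-(\mathcal M_\delta\varphi_c)'=0$, so $-c\varphi_c+\varphi_c^2-\mathcal M_\delta\varphi_c$ equals a constant $A_c$, which is $(\ref{travkdv})$; averaging over a period and using that $\varphi_c$ and $\mathcal M_\delta\varphi_c=(\mathcal T_\delta\varphi_c)'-\delta^{-1}\varphi_c$ both have zero mean gives $A_c=A(k)=\tfrac1L\int_0^L\varphi_{c(k)}^2(x)\,dx$.

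The only genuinely delicate point I anticipate is the bookkeeping around the single inequality $(\ref{ilw.4})$: one must verify that it is simultaneously satisfiable (this defines $k_1$), that it is precisely the non-singularity condition turning the theta-function ansatz into a smooth $L$-periodic solution of $(\ref{ILW})$, and that it is strong enough to bound every elliptic-function denominator in $(\ref{ilw.30})$--$(\ref{ilw.29})$ away from zero uniformly on compact $k$-subintervals --- the last being what powers the smoothness of $k\mapsto\varphi_{c(k)}$ as a map into $H^n_{per}([0,L])$. By contrast, the algebraic verification that $\theta_3(z,q(k))$ solves the bilinear equation $(\ref{ilw.25.2})$, and hence that $\varphi_{c(k)}$ solves $(\ref{ILW})$, is routine given the theta-function identities and has already been carried out following \cite{parker} and \cite{NM}.
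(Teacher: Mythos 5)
Your proposal is correct and follows the same route as the paper: the theorem is essentially a summary of the Hirota/theta-function construction of Section 3, with $k_1$ determined by the non-singularity condition $(\ref{ilw.4})$. You supply several verifications the paper leaves implicit --- the strict monotonicity of $v(L,\delta,\cdot)$ pinning down $k_1$, the uniform positive lower bounds on the elliptic-function denominators that give smoothness of $k\mapsto\varphi_{c(k)}$ into $H^n_{per}([0,L])$, the mean-zero property (which the paper reads off the cosine Fourier expansion $(\ref{ilw.53})$ rather than via your winding-number argument), and the averaging that identifies $A_c=\frac1L\int_0^L\varphi_{c}^2\,dx$ --- all of which are consistent with the paper's construction.
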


In our analysis of linear and orbital stability of the profile $\varphi_{c(k)}$ in sections 5 and 6 below, we need to determine the sign of the derivate $\frac{d}{dk} c(k)$. For arbitrary values of $L$ and $\delta$ this calculation becomes a challenge. By making many numerical simulations with fixed values of $L$ and $\delta$ we obtain that $c=c(k)$ will always represent  a strictly increasing function on the specific interval $(0, k_1)$, and so we can assure the property $(P0)$. For instance,  the specific case of $L=\pi$ and $\delta=1$ we obtain the following plots for the function $c(k)$ and its derivate $c'(k)$, respectively,
\begin{figure}[!htb]
\begin{minipage}[b]{0.42\linewidth}
\begin{center}
\includegraphics[width=5.0cm,height=5.0cm]{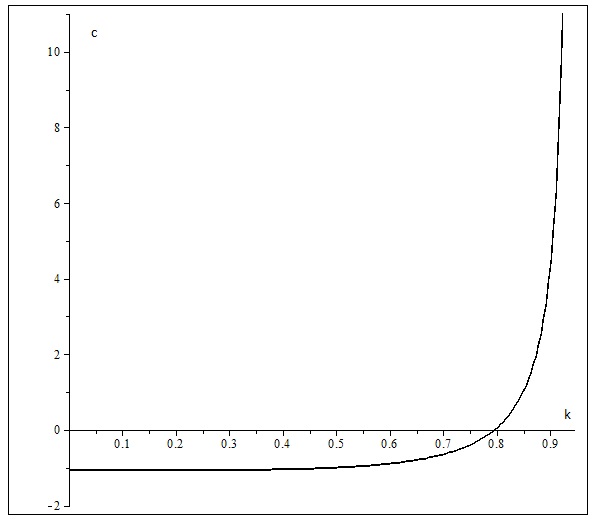}
\end{center}
\end{minipage} \hfill
\begin{minipage}[b]{0.42\linewidth}
\begin{center}
\includegraphics[width=5.0cm,height=5.0cm]{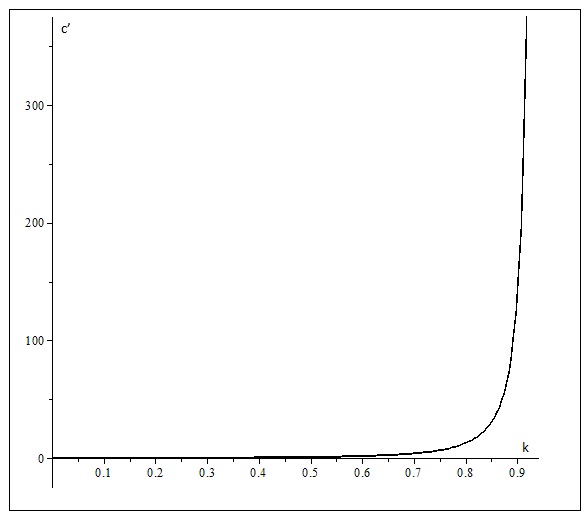}
\end{center}
\end{minipage}
\end{figure}\


Moreover, from the formula in (\ref{ilw.29}) and some numerical simulations, we obtain immediately  that  $k_1$ in Theorem \ref{exis}  has the approximation $k_1\approx 0,944085037$, and  for $k\in (0, k_1)$ we have the basic condition in  (\ref{ilw.4}), $v(\pi, 1, k)<1$, and
$$
c(0)=\displaystyle\lim_{k\rightarrow 0^{+}}c(k)\approx -1.07462944,\;\; \text{and}\;\;
\displaystyle\lim_{k\rightarrow k_1^{-}}c(k)= +\infty.
$$
We note that, there is a unique
$k_0\approx 0.795178532$ such that
\begin{eqnarray}\label{ilw.34.2}
c(k_0)=0\ \ \textrm{and}\ \ c(k)>0,\ \mbox{for all}\ k\in
(k_0,k_1),
\end{eqnarray}
therefore, the velocity $c$ is  negative on the interval $(0, k_0)$.

The simulations for the cases $L=\pi$ and differentes values of $\delta$, showed a similar behavior of the functions $c(k)$ and $c'(k)$ as showed above.

\section{Spectral Analysis}
\setcounter{equation}{0}
    \setcounter{defi}{0}
    \setcounter{teo}{0}
    \setcounter{lema}{0}
    \setcounter{prop}{0}
    \setcounter{coro}{0}

In this section, we start the analysis of the spectral problem
(\ref{vlinear2}) with $\mathcal{L}:=\mathcal{L}_{c,\delta}$ defined
in (\ref{linop}). The main idea for this study will be determine two
specific spectral  properties for $\mathcal L$, namely, that the
kernel is one-dimensional with $ker(\mathcal L)=[\frac{d}{dx}
\varphi_c]$ and the existence of a unique negative eigenvalue which
is simple.  Since the operator $\mathcal{L}$ is non-local this
analysis is not immediate.  In this point we will apply the
theory of Angulo and Natali put forward in  \cite{AN1} for studying the
stability of periodic traveling waves for the nonlinear dispersive  model
(\ref{equakawa}). The initial obstacle for applying Angulo and Natali's
approach is that the periodic traveling wave profile $\psi$
related to the equation (\ref{equakawa}) needs to be positive and satisfying the equation
$$
\mathcal M \psi+c\psi-\psi^2=0.
$$
Moreover, the wave speed $\varsigma$ needs to satisfy $\varsigma> -\inf_{r\in
\mathbb R} \theta (r)$ in order to determine that $\mathcal M +\varsigma$ is a positive
operator. In our analysis above (section 2), the traveling wave profile of $\varphi_c$ in (\ref{solilw})
has mean zero and the constant $A_c$ in (\ref{travkdv}) is not zero.
In order to overcome this difficulty, we shall use that the ILW equation is
invariant  by the Galilean transformation
$$
v(x,t)=u(x+2\gamma t,
t)-\gamma,
$$
for $\gamma$ being a real arbitrary value. The second obstacle is to determine the required spectral properties associated with the linearized operator $\mathcal{L}$ for arbitrary values of $L$ and $\delta$. So, by convenience in the exposition we shall restrict on a couple of specific values for $L$ and $\delta$, $L=\pi$ and $\delta=1$, respectively. However, numerical simulations enable us to conclude  that for other arbitrary values of $L$ and $\delta$ our results  remain valid.

In what follows, we establish some preliminaries definitions
and results due to Angulo and Natali's in \cite{natali}.

\begin{defi}\label{d.ilw.2} We say that a sequence $\alpha=(\alpha_n)_{n\in\mathbb{Z}}\subseteq
\mathbb{R}$ is in the class $PF(2)$
discrete if \\
\begin{enumerate}
\item[i)] $\alpha_n>0$,  for all $n\in\mathbb{Z}$,
\item[ii)] $\alpha_{n_1-m_1}\alpha_{n_2-m_2}-\alpha_{n_1-m_2}\alpha_{n_2-m_1}\geq 0$, for $n_1<n_2$ and $m_1<m_2$,
\item[iii)] $\alpha_{n_1-m_1}\alpha_{n_2-m_2}-\alpha_{n_1-m_2}\alpha_{n_2-m_1}> 0$,  if $n_1<n_2$, $m_1<m_2, n_2>m_1$, and $n_1<m_2$.
\end{enumerate}
\end{defi}
\indent The definition above is a particular case of the continuous
ones which appears in \cite{albert1} (see also Karlin \cite{Kar}), namely, we say that a function
$g:\mathbb R\to \mathbb R$
is in $PF(2)$-continuous if,
\begin{enumerate}
\item[i)] $g(x)>0$, for all $x\in\mathbb{R}$,
\item[ii)] $g(x_1-y_1)g(x_2-y_2)-g(x_1-y_2)g(x_2-y_1)\geq0$, for
$x_1<x_2$ and $\ y_1<y_2$,
\item[iii)] strict inequality holds in {\rm (ii)} whenever the intervals $(x_1, x_2)$ and $(y_1, y_2)$
intersect.\\
\end{enumerate}

An sufficient condition for $g$ belongs to $PF(2)$-continuous is for $g$ to be logarithmically concave, namely,
$$
\frac{d^2}{dx^2} log [g(x)]<0,\qquad x\neq 0.
$$
As an example of $PF(2)$-continuous functions, we have the profile
$Q_0(x)=\mbox{sech}^p(x)$, for $p>0$, and for $0<\nu<\mu$
$$
Q(x)=\frac{sinh(\nu x)}{sinh(\mu x)}.
$$
Hence, the sequences $(Q_0(n))_{n\in\mathbb{Z}}$ and $(Q(n))_{n\in\mathbb{Z}}$ belong to the class $PF(2)$
discrete.

The main theorem in  \cite{AN1} is the following

\begin{teo}\label{teonatali}
Suppose that $\psi_{\varsigma}$ is an even positive  solution of
(\ref{travkdv}) with $A\equiv0$, namely,
$$
\mathcal M \psi_{\varsigma}+\varsigma\psi_{\varsigma}-\psi_{\varsigma}^2=0,
$$
such that $\{\widehat{\psi_{\varsigma}}(n)\}_{n\in \mathbb Z}\in PF(2)$
discrete. Then the self-adjoint operator $\mathcal{L}_{\varsigma}:=\mathcal{M}+\varsigma -2\psi_{\varsigma}$
possesses only one negative eigenvalue which is simple and zero is a
simple eigenvalue with eigenfunction $\frac{d}{dx}\psi_{\varsigma}$. Moreover,
its spectrum is bounded away from zero.
\end{teo}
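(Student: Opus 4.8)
The plan is to follow Albert \cite{albert1}: to reduce the spectral problem for $\mathcal{L}_\varsigma=\mathcal{M}+\varsigma-2\psi_\varsigma$ to a Birman--Schwinger type eigenvalue problem on $\ell^{2}(\mathbb{Z})$ whose matrix is built directly from the sequence $\{\widehat{\psi_\varsigma}(n)\}$, and then to feed the $PF(2)$ hypothesis into the oscillation theory for totally positive kernels (Karlin \cite{Kar}). First, the easy assertions. Since $\theta$ obeys (\ref{alpha1}) with $\theta(n)\to\infty$, the operator $\mathcal{M}+\varsigma$ is self-adjoint and positive with compact resolvent, and $2\psi_\varsigma$ is a bounded (hence relatively compact) perturbation; therefore $\mathcal{L}_\varsigma$ is self-adjoint, bounded below, and $\sigma(\mathcal{L}_\varsigma)$ consists of eigenvalues of finite multiplicity accumulating only at $+\infty$. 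In particular $0$ is isolated in the spectrum, which is the last assertion. Differentiating the profile identity $\mathcal{M}\psi_\varsigma+\varsigma\psi_\varsigma-\psi_\varsigma^{2}=0$ in $x$ gives $\mathcal{L}_\varsigma\bigl(\tfrac{d}{dx}\psi_\varsigma\bigr)=0$, so $\tfrac{d}{dx}\psi_\varsigma\in\ker\mathcal{L}_\varsigma$; using that identity again, $\mathcal{L}_\varsigma\psi_\varsigma=-\psi_\varsigma^{2}$, hence $\langle\mathcal{L}_\varsigma\psi_\varsigma,\psi_\varsigma\rangle=-\int_{0}^{L}\psi_\varsigma^{3}\,dx<0$ since $\psi_\varsigma>0$, so $\mathcal{L}_\varsigma$ has at least one negative eigenvalue.

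For $\lambda\le0$ one has $\theta(n)+\varsigma-\lambda\ge\theta(n)+\varsigma>0$, so $\mathcal{M}+\varsigma-\lambda$ is boundedly invertible. Setting $c_{n}(\lambda)=(\theta(n)+\varsigma-\lambda)^{-1/2}$ and passing to Fourier coefficients, a direct computation shows that $\lambda\in\sigma(\mathcal{L}_\varsigma)$ with eigenfunction $f$ if and only if $1$ is an eigenvalue of the operator $T^{\lambda}$ on $\ell^{2}(\mathbb{Z})$ with entries $(T^{\lambda})_{nm}=2\,c_{n}(\lambda)\,\widehat{\psi_\varsigma}(n-m)\,c_{m}(\lambda)$, the correspondence being $b_{n}=c_{n}(\lambda)^{-1}\widehat{f}(n)$; since $c_{n}(\lambda)\to0$, the operator $T^{\lambda}$ is compact, and it is real symmetric because $\psi_\varsigma$ is real and even. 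The crucial point is that $T^{\lambda}$ inherits total positivity: each $2\times2$ minor of $T^{\lambda}$ over ordered index sets is a strictly positive multiple (by the $c_{n}(\lambda)$'s) of the corresponding minor of $\{\widehat{\psi_\varsigma}(n-m)\}$, which is $\ge0$ by item (ii) of Definition \ref{d.ilw.2} and strictly positive in the configurations of item (iii); together with item (i) this makes $T^{\lambda}$ a discrete oscillation kernel of order $2$. Hence, by the Krein--Rutman theorem and Karlin's oscillation theory, the eigenvalues of $T^{\lambda}$ satisfy $\mu_{0}(\lambda)>\mu_{1}(\lambda)>\mu_{2}(\lambda)\ge\cdots>0$, with $\mu_{0}(\lambda)$ and $\mu_{1}(\lambda)$ simple, and the eigenvector associated with $\mu_{j}(\lambda)$ changes sign exactly $j$ times (as a sequence) for $j=0,1$.

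Now the count. By the Birman--Schwinger principle the number of negative eigenvalues of $\mathcal{L}_\varsigma$, counted with multiplicity, equals $\#\{j:\mu_{j}(0)>1\}$, and the multiplicity of $0$ as an eigenvalue of $\mathcal{L}_\varsigma$ equals $\#\{j:\mu_{j}(0)=1\}$. Choosing $b_{n}=(\theta(n)+\varsigma)^{1/2}\widehat{\psi_\varsigma}(n)$, which corresponds to $f=\psi_\varsigma$, one finds $\langle T^{0}b,b\rangle=2\|b\|^{2}$ (this is the Fourier restatement of $\langle\mathcal{L}_\varsigma\psi_\varsigma,\psi_\varsigma\rangle<0$), whence $\mu_{0}(0)\ge2>1$. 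On the other hand, the $T^{0}$-eigenvector attached to $\tfrac{d}{dx}\psi_\varsigma$ is $b_{n}=(\theta(n)+\varsigma)^{1/2}\,\tfrac{2\pi i n}{L}\,\widehat{\psi_\varsigma}(n)$, i.e.\ $i$ times a real sequence which is negative for $n<0$, vanishes at $n=0$ and is positive for $n>0$; it has exactly one sign change, so by the oscillation theory it must be the $\mu_{1}(0)$-eigenvector, and therefore $\mu_{1}(0)=1$. Since $\mu_{0}(0)>1=\mu_{1}(0)>\mu_{j}(0)$ for all $j\ge2$, we get $\#\{j:\mu_{j}(0)>1\}=1$ and $\#\{j:\mu_{j}(0)=1\}=1$. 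Thus $\mathcal{L}_\varsigma$ has exactly one negative eigenvalue, which is simple; $0$ is a simple eigenvalue, so $\ker\mathcal{L}_\varsigma=\bigl[\tfrac{d}{dx}\psi_\varsigma\bigr]$; and the rest of the spectrum is positive and isolated from $0$, as claimed.

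The step I expect to be the main obstacle is the rigorous use of the oscillation theory on the bi-infinite index set $\mathbb{Z}$: checking that $T^{\lambda}$ is genuinely an oscillation kernel of order $2$ --- so that $\mu_{0}(\lambda)$ and $\mu_{1}(\lambda)$ are simple and strictly separated from the rest of the spectrum, which is exactly where the strictness in item (iii) of Definition \ref{d.ilw.2} is used --- and matching ``the eigenvector with exactly one sign change'' with the explicit Fourier-coefficient sequence produced by $\tfrac{d}{dx}\psi_\varsigma$. Making precise the monotonicity and limiting behaviour of the $\mu_{j}(\lambda)$ needed to justify the Birman--Schwinger counting identities is the other technical point; the remaining verifications are routine.
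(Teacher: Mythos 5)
The paper does not prove this theorem: it is quoted verbatim as the main result of the cited reference \cite{AN1} (see also \cite{natali}), and your reconstruction follows exactly the route used there --- the discrete Birman--Schwinger reduction to the family $T^{\lambda}$ on $\ell^{2}(\mathbb{Z})$, inheritance of the $PF(2)$ property through the positive weights $c_{n}(\lambda)$, the oscillation/Krein--Rutman argument identifying $\mu_{0}$ and $\mu_{1}$, and the sign-change count on $\widehat{\psi_{\varsigma}'}(n)=\tfrac{2\pi i n}{L}\widehat{\psi_{\varsigma}}(n)$ forcing $\mu_{1}(0)=1$. The argument is correct (the points you flag as technical --- monotonicity and decay of the $\mu_{j}(\lambda)$, and the converse ``one sign change implies $\mu_{1}$'' step --- are precisely the lemmas established in \cite{albert1} and their discrete analogues in \cite{natali}), so this is essentially the same proof as in the source the paper relies on.
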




%

Our focus in the following  is to apply Theorem \ref{teonatali} in order to prove
our main result associated to the linear operator $\mathcal{L}$
in $(\ref{linop})$.

\begin{teo}\label{ilw.spec} Let $L=\pi$ and $\delta=1$ and consider $k\in (0,k_1)$, with $k_1$ defined by Theorem \ref{exis}. Then for $\varphi_c$ defined in (\ref{solilw}) with $c=c(k)$, we have that $\mathcal{L}$ in $(\ref{linop})$ is a self-adjoint operator with a discrete spectrum and satisfying
$\ker(\mathcal{L})=[\frac{d}{dx}\varphi_{c}]$. In addition,
$\mathcal{L}$ possess a unique negative eigenvalue which simple and
the remainder of the spectrum is constituted by isolated real
numbers which are bounded away from zero.
\end{teo}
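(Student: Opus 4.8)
The plan is to reduce Theorem \ref{ilw.spec} to the already-established Theorem \ref{teonatali} by exploiting the Galilean invariance of the ILW equation, which was precisely the reason for recalling that invariance in the discussion preceding the statement. First I would note that $\varphi_c$ in (\ref{solilw}) satisfies (\ref{travkdv}), i.e. $-c\varphi_c+\varphi_c^2-\mathcal{M}_\delta\varphi_c=A_c$ with $A_c=\frac1L\int_0^L\varphi_c^2\,dx\neq0$, so the hypotheses of Theorem \ref{teonatali} (namely $A\equiv0$ and positivity of the profile) are not directly met. To fix this, set $\psi:=\varphi_c+\gamma$ for a suitable constant $\gamma\in\mathbb R$ and observe that, since $\mathcal{M}_\delta=\mathcal{T}_\delta\partial_x-\frac1\delta$ annihilates constants up to the $-\frac1\delta$ term, a short computation shows $\psi$ solves an equation of the same type, $\mathcal{M}_\delta\psi+\varsigma\psi-\psi^2=0$, with a shifted speed $\varsigma=\varsigma(c,\gamma)$ and with the constant term eliminated for the correct choice of $\gamma$ (the quadratic-in-$\gamma$ condition has a real root because of the explicit expression for $A_c$). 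Crucially, $\mathcal{L}_{c,\delta}=\mathcal{M}_\delta+c-2\varphi_c$ and $\mathcal{L}_\varsigma:=\mathcal{M}_\delta+\varsigma-2\psi$ coincide as operators (the constant $-2\gamma$ absorbs the difference between $c$ and $\varsigma$), so the spectral picture of the one is exactly that of the other, and $\frac{d}{dx}\psi=\frac{d}{dx}\varphi_c$.

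Next I would verify the two structural hypotheses needed to invoke Theorem \ref{teonatali} for $\psi$: that $\psi$ is even and positive, and that the Fourier coefficient sequence $\{\widehat{\psi}(n)\}_{n\in\mathbb Z}$ lies in the class $PF(2)$ discrete. Evenness is immediate from the construction in section 3 (the $\theta_3$ representation is even in $x$). Positivity of $\psi=\varphi_c+\gamma$ for the chosen $\gamma$ should follow by inspecting the explicit formula (\ref{ilw.30}) together with the restriction $k\in(0,k_1)$ ensuring $v(\pi,1,k)<1$, so that the denominator $1-\mathrm{dn}^2\mathrm{sn}^2$ stays positive and the whole expression is bounded; one then checks the shift $\gamma$ pushes it above zero. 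For the $PF(2)$ property, the natural route is to compute (or read off from the $\theta_1$/Zeta-function representation) the Fourier coefficients of $\psi$ and recognize them as, up to positive normalization, of the form $\frac{\sinh(\nu n)}{\sinh(\mu n)}$ or a comparable logarithmically concave profile — indeed this is exactly the periodic analogue of the Benjamin--Ono situation, where the soliton-type kernel $\frac{\sinh(\nu x)}{\sinh(\mu x)}$ with $0<\nu<\mu$ is cited in the excerpt as a $PF(2)$ example. Here the parameters $\nu,\mu$ will be governed by $\delta$, $L=\pi$ and $k$, and the constraint (\ref{ilw.4}) is what guarantees $0<\nu<\mu$. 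With these in hand, Theorem \ref{teonatali} gives that $\mathcal{L}_\varsigma$ has exactly one negative eigenvalue, simple, with $\ker(\mathcal{L}_\varsigma)=[\frac{d}{dx}\psi]$ and spectrum bounded away from zero; transporting back via the operator identity yields the claim for $\mathcal{L}$.

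Finally I would record that $\mathcal{L}$ is self-adjoint with compact resolvent on $L^2_{per}([0,\pi])$ — this is standard since $\mathcal{M}_\delta$ has symbol $\theta_\delta(n)=|n|\coth(2\pi n\delta/L)-\frac1\delta\to\infty$, so $\mathcal{M}_\delta+c$ has compact resolvent and the bounded multiplication by $-2\varphi_c$ is a relatively compact perturbation; hence the spectrum is discrete, consisting of isolated eigenvalues of finite multiplicity accumulating only at $+\infty$, which is the last assertion of the theorem. The main obstacle I anticipate is the $PF(2)$ verification: one must produce the Fourier coefficients of $\psi$ in closed form and prove the logarithmic concavity (or the determinantal inequalities in Definition \ref{d.ilw.2}) with the explicit dependence on $k$ and $\delta$, and one must be careful that the normalization constant coming from the $\gamma$-shift is positive and does not destroy the sign structure; the positivity of $\psi$ itself is a secondary but genuine point, since it hinges delicately on staying within the admissible range $k\in(0,k_1)$.
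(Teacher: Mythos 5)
Your proposal follows essentially the same route as the paper: shift $\varphi_c$ by the positive root $a$ of $a^2+ca-A_c=0$ to obtain a positive, even solution $\phi_{\varsigma}=\varphi_c+a$ of the $A\equiv 0$ equation with speed $\varsigma=c+2a$, verify $\{\widehat{\phi_{\varsigma}}(n)\}_{n\in\mathbb Z}\in PF(2)$ discrete via the logarithmic concavity of $\sinh(\nu x)/\sinh(\mu x)$ with $0<\nu<\mu$ guaranteed by (\ref{ilw.4}), apply Theorem \ref{teonatali}, and transport the conclusion back through the operator identity $\mathcal{L}_{\varsigma,\delta}=\mathcal{L}$. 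The two delicate points you correctly flag (positivity of $\varphi_c+a$, and the compatibility of the zeroth Fourier coefficient $\widehat{\phi_{\varsigma}}(0)=a$ with a log-concave interpolation, i.e. $a>\tfrac{2\pi}{L}v(L,\delta,k)$) are resolved in the paper by numerical verification for $L=\pi$, $\delta=1$, which is precisely why the theorem is stated for those specific parameters.
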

{\proof  Initially, from the specific form of $\mathcal{L}$ we obtain from classical perturbation theory and spectral theory that $\mathcal{L}$ is a  self-adjoint operator with a discrete spectrum (see \cite{natali}).

Now,  in order to simplify the notation, we denote
\begin{eqnarray*}N(k):=\displaystyle\int_0^L\varphi_{c(k)}^2(x)\;dx, \qquad
R(k):=\displaystyle\frac{N(k)}{L},\end{eqnarray*}

\begin{eqnarray}\label{ilw.35} m_1:=\displaystyle\frac{4K(k)}{L}\cdot\textrm{cn}\displaystyle\left(\displaystyle\frac{2K(k)\delta}{L};k'\right) \cdot \textrm{sn}\displaystyle\left(\displaystyle\frac{2K(k)\delta}{L};k'\right) \cdot
\textrm{dn}\displaystyle\left(\displaystyle\frac{2K(k)\delta}{L};k'\right),\end{eqnarray}
\begin{eqnarray}\label{ilw.36} m_2:=\textrm{sn}^2\displaystyle\left(\displaystyle\frac{2K(k)\delta}{L};k'\right),
\end{eqnarray}
\begin{eqnarray}\label{ilw.38}m_3:=-\displaystyle\frac{4K(k)}{L}\cdot
Z\displaystyle\left(\displaystyle\frac{2K(k)\delta}{L};k'\right)-\displaystyle\frac{4\delta\pi}{L^2}\cdot\displaystyle\frac{K(k)}{K(k')},\
\ \textrm{and}\ \  m_4:=\displaystyle\frac{2K(k)}{L}.\end{eqnarray}

In the following analysis we will leave the parameters $L$ and $\delta$ fixed, but arbitrary. Thus,  from  (\ref{ilw.30}), (\ref{ilw.35}), (\ref{ilw.36}) and (\ref{ilw.38})
we get the expression
\begin{eqnarray}\label{ilw.39}\varphi_c(x)=m_1\cdot\displaystyle\frac{\textrm{dn}^2\displaystyle\left(m_4\cdot x;k\right)}{1-
m_2\cdot\textrm{dn}^2\displaystyle\left(m_4\cdot x;k\right)}+m_3
\end{eqnarray}
and, consequently,
\begin{eqnarray}\label{ilw.N}N(k)&=&m_1^2\cdot\displaystyle\int_0^L\displaystyle\frac{\textrm{dn}^4\displaystyle\left(m_4\cdot x;k\right)}{\displaystyle\left[1-
m_2\cdot\textrm{dn}^2\displaystyle\left(m_4\cdot x;k\right)\right]^2}\
dx\nonumber\\
\\
&+&2m_1m_3\cdot \displaystyle\int_0^L
\displaystyle\frac{\textrm{dn}^2\displaystyle\left(m_4\cdot x;k\right)}{1-
m_2\cdot\textrm{dn}^2\displaystyle\left(m_4\cdot x;k\right)} \
dx+Lm_3^2.\nonumber\end{eqnarray}

Next, by using formula 410.04 in \cite{byrd} we deduce
\begin{eqnarray}\label{ilw.41}\displaystyle\int_0^L
\displaystyle\frac{\textrm{dn}^2\displaystyle\left(m_4\cdot x;k\right)}{1-
m_2\cdot\textrm{dn}^2\displaystyle\left(m_4\cdot x;k\right)} \
dx&=&\displaystyle\frac{1}{m_4}\cdot\displaystyle\int_0^{m_4\cdot
L}\displaystyle\frac{\textrm{dn}^2\displaystyle\left(\zeta;k\right)}{1-
m_2\cdot\textrm{dn}^2\displaystyle\left(\zeta;k\right)} \
d\zeta\nonumber\\
\nonumber\\
&=&\displaystyle\frac{1}{m_4}\cdot\displaystyle\int_0^{2K(k)}\displaystyle\frac{\textrm{dn}^2\displaystyle\left(\zeta;k\right)}{1-m_2+
m_2\cdot k^2\cdot\textrm{sn}^2\displaystyle\left(\zeta;k\right)} \
d\zeta\nonumber\\
\\
&=&\displaystyle\frac{2}{m_4\cdot(1-m_2)}\cdot\displaystyle\int_0^{K(k)}\displaystyle\frac{\textrm{dn}^2\displaystyle\left(\zeta;k\right)}{1-\alpha^2\cdot\textrm{sn}^2\displaystyle\left(\zeta;k\right)}
\ d\zeta\nonumber\\
\nonumber\\
&=&\displaystyle\frac{2}{m_4\cdot(1-m_2)}\cdot\displaystyle\left[\displaystyle\frac{\pi\cdot(k^2-\alpha^2)\cdot
\Lambda_0(\psi,k)}{2\displaystyle\sqrt{\alpha^2\cdot(1-\alpha^2)\cdot
(\alpha^2-k^2)}}\right],\nonumber
\end{eqnarray}
where
\begin{equation}\label{ilw.42}\alpha^2=-\displaystyle\frac{m_2\cdot k^2}{1-m_2}<0,\
\ m_2\neq 1,\ \
\psi=\sin^{-1}\displaystyle\left(\displaystyle\sqrt{\displaystyle\frac{\alpha^2}{\alpha^2-k^2}}\right),\end{equation}
and $\Lambda_0$ indicates the Lambda Heuman function defined
by\begin{equation}\label{ilw.42.1}\Lambda_0(\psi,k)=\displaystyle\frac{2}{\pi}\cdot\displaystyle\left[E(k)\cdot
F(\psi,k')+K(k)\cdot E(\psi,k')-K(k)\cdot
F(\psi,k')\right],\end{equation} where
\begin{equation}\label{ilw.42.2}E(k)=\displaystyle\int_0^1\displaystyle\sqrt{\displaystyle\frac{1-k^2t^2}{1-t^2}}\ dt,\ \ E(\psi,k')=\displaystyle\int_0^{\psi}\displaystyle\sqrt{1-(1-k^2)\sin^2(\theta)}\ d\theta\end{equation}
and
\begin{equation}\label{ilw.42.3}F(\psi,k')=\displaystyle\int_0^{\psi}\displaystyle\frac{d\theta}{\displaystyle\sqrt{1-(1-k^2)\sin^2(\theta)}}.\end{equation}
Therefore, formula 410.08 in \cite{byrd} enables us to conclude
\begin{eqnarray}\label{ilw.43}\displaystyle\int_0^L
\displaystyle\frac{\textrm{dn}^4\displaystyle\left(m_4\cdot x;k\right)}{\displaystyle\left[1-
m_2\cdot\textrm{dn}^2\displaystyle\left(m_4\cdot x;k\right)\right]^2}
\
dx&=&\displaystyle\frac{1}{m_4}\cdot\displaystyle\int_0^{m_4\cdot
L}\displaystyle\frac{\displaystyle\left[\textrm{dn}^2\displaystyle\left(\zeta;k\right)\right]^2}{\displaystyle\left[1-
m_2\cdot\textrm{dn}^2\displaystyle\left(\zeta;k\right)\right]^2} \
d\zeta\nonumber\\
\nonumber\\
&=&\displaystyle\frac{1}{m_4}\cdot\displaystyle\int_0^{2K(k)}\displaystyle\frac{\displaystyle\left[1-k^2\cdot\textrm{sn}^2\displaystyle\left(\zeta;k\right)\right]^2}{\displaystyle\left[1-m_2+
m_2\cdot
k^2\cdot\textrm{sn}^2\displaystyle\left(\zeta;k\right)\right]^2}
\ d\zeta\nonumber\\
\nonumber\\
&=&\displaystyle\frac{2}{m_4\cdot(1-m_2)^2}\cdot\displaystyle\int_0^{K(k)}\displaystyle\frac{\displaystyle\left[1-k^2\cdot
\textrm{sn}^2\displaystyle\left(\zeta;k\right)\right]^2}{\displaystyle\left[1-\alpha^2\cdot\textrm{sn}^2\displaystyle\left(\zeta;k\right)\right]^2}
\ d\zeta\nonumber\\
\\
&=&\displaystyle\frac{2}{m_4\cdot(1-m_2)^2}\cdot\displaystyle\frac{1}{\alpha^4}\cdot\displaystyle\left[k^4\cdot
K(k)\right.\nonumber\\
\nonumber\\
&+&\displaystyle\left.2\cdot k^2\cdot (\alpha^2-k^2)\cdot
\Pi(\alpha^2,k)+(\alpha^2-k^2)^2\cdot V_2\right],\nonumber
\end{eqnarray}
where \begin{eqnarray}\label{ilw.44}\Pi(\alpha^2,k)=\displaystyle\frac{k^2\cdot K(k)}{k^2-\alpha^2}-\displaystyle\frac{\pi\cdot \alpha^2\cdot \Lambda_0(\psi,k)}{2\displaystyle\sqrt{\alpha^2\cdot (1-\alpha^2)\cdot
(\alpha^2-k^2)}}\end{eqnarray} and
\begin{eqnarray}\label{ilw.45}V_2&=&\displaystyle\frac{1}{2\cdot (\alpha^2-1)\cdot (k^2-\alpha^2)}\cdot\displaystyle\left\{\displaystyle\frac{\displaystyle\left[2\cdot k^4\cdot\alpha^2-2\cdot k^4+\alpha^4\cdot (1-k^2)\right]\cdot
K(k)}{k^2-\alpha^2}\right.\nonumber\\
\\
&+&\displaystyle\left.\alpha^2\cdot
E(k)-\displaystyle\frac{\pi\cdot\displaystyle\left(2\cdot\alpha^2\cdot
k^2+2\cdot\alpha^2-\alpha^4-3\cdot k^2\right)\cdot \alpha^2\cdot
\Lambda_0(\psi,k)}{2\displaystyle\sqrt{\alpha^2\cdot
(1-\alpha^2)\cdot (\alpha^2-k^2)}}\right\}.\nonumber
\end{eqnarray}
Statements
(\ref{ilw.35})-(\ref{ilw.45}), give us
\begin{eqnarray}\label{ilw.46}N(k)&=&\displaystyle\frac{2\cdot m_1^2}{m_4\cdot(1-m_2)^2}\cdot\displaystyle\frac{1}{\alpha^4}\cdot\displaystyle\left[k^4\cdot
K(k)\right.\nonumber\\
\nonumber\\
&+&\displaystyle\left.2\cdot k^2\cdot (\alpha^2-k^2)\cdot
\Pi(\alpha^2,k)+(\alpha^2-k^2)^2\cdot V_2\right]\\
\nonumber\\
&+&\displaystyle\frac{2\cdot m_1\cdot
m_3}{m_4\cdot(1-m_2)}\cdot\displaystyle\left[\displaystyle\frac{\pi\cdot(k^2-\alpha^2)\cdot
\Lambda_0(\psi,k)}{\displaystyle\sqrt{\alpha^2\cdot(1-\alpha^2)\cdot
(\alpha^2-k^2)}}\right]+Lm_3^2.\nonumber
\end{eqnarray}\

Next, by considering the specific values of  $L=\pi$, $\delta=1$, we obtain for all $k\in (0,k_1)$ ($k_1\approx 0,944085037$), the existence of $a=a(k)>0$
such that
\begin{eqnarray}\label{ilw.51}a^2+ca-R=0.\end{eqnarray} In fact, one has
\begin{eqnarray}\label{ilw.52}a=\displaystyle\frac{-c+\displaystyle\sqrt{c^2+4R}}{2}.
\end{eqnarray}
Moreover, by using that
$$
\min_{x\in
[0,L]}\varphi_c(x)=\varphi_c\displaystyle\left(\displaystyle\frac{L}{2}\right),
$$
we find via numerical simulations (see Figure \ref{f.ilw.8} below)
that
\begin{eqnarray}\label{ilw.52.1}
a(k)> -\varphi_{c(k)}\displaystyle\left(\displaystyle\frac{L}{2}\right),\qquad \text{for all}\;\; k\in (0,k_1).
\end{eqnarray}

\begin{figure}[!h]
\includegraphics[width=7.0cm,height=6.0cm]{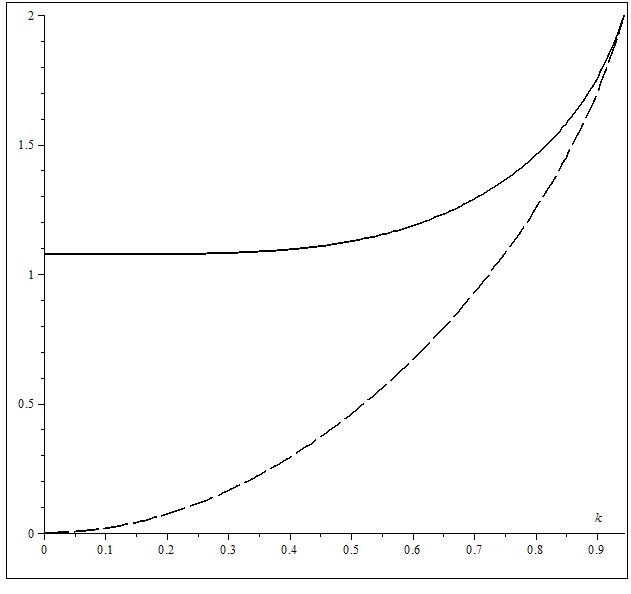}
\caption{\label{f.ilw.8}\small Consider $L=\pi$ and $\delta=1$. The
continuous line gives us the behavior of the function $a=a(k)$ for
$k\in (0,k_1)$. On the other hand, the dashed line shows us the
behavior of $-\varphi_{c(k)}( L/2)$ for $k\in(0,k_1)$.}
\end{figure}

Next, let us define $
\varsigma=\varsigma(k)$ by
$$
\varsigma:=c+2a=\displaystyle\sqrt{c^2+4R}>0
$$
and we consider the
translation function $\phi_{\varsigma}:=a+\varphi_c$. By using
(\ref{ilw.52.1}), we conclude $\phi_{\varsigma}>0$. Moreover, since
$\varphi_c$ is an even $L-$periodic function one has that
$\phi_{\varsigma}$ is also an even $L-$periodic function. Now, we claim
that $\phi_{\varsigma}$ satisfies equation (\ref{travkdv}) with
$A\equiv0$. Indeed, since
$\mathcal{M}_{\delta}(\varphi_c+\beta)=\mathcal{M}_{\delta}\varphi_c$,
for all $\beta\in \mathbb R$, it follows from (\ref{travkdv}) and
(\ref{ilw.51}) that
\begin{eqnarray*}-\mathcal{M}_{\delta}\phi_{\varsigma}-\varsigma
\phi_{\varsigma}+\phi_{\varsigma}^2&=&-\mathcal{M}_{\delta}\phi_{\varsigma}-c
\phi_{\varsigma}-2a\phi_{\varsigma}+\phi_{\varsigma}^2\nonumber\\
&=&-\mathcal{M}_{\delta}\varphi_c-c
(\varphi_c+a)-2a(\varphi_c+a)+(\varphi_c+a)^2\nonumber\\
&=&-\mathcal{M}_{\delta}\varphi_c-c\varphi_c+\varphi_c^2-(ca+a^2)=0.\nonumber\\
\end{eqnarray*}

In what follows, we will verify that for all $k\in (0,k_1)$,
$\{\widehat{\phi_{\varsigma}}(n)\}_{n\in \mathbb Z}\in PF(2)$ discrete.
We recall that such values of $k$ satisfy the analytic condition (\ref{ilw.4}). Applying  formula 905.01
of \cite{byrd} in (\ref{solilw}), we obtain
\begin{eqnarray}\label{ilw.53}\varphi_c(x)&=&\displaystyle\frac{2\pi
i}{L}\displaystyle\left[\displaystyle\sum_{m=1}^{+\infty}\displaystyle\frac{\sin\displaystyle\left(\displaystyle\frac{2m\pi}{L}\displaystyle\left(x-i\delta\right)\right)}{\sinh\displaystyle\left(\displaystyle\frac{m\pi
K(k')}{K(k)}\right)}-\displaystyle\sum_{m=1}^{+\infty}\displaystyle\frac{\sin\displaystyle\left(\displaystyle\frac{2m\pi}{L}\displaystyle\left(x+i\delta\right)\right)}{\sinh\displaystyle\left(\displaystyle\frac{m\pi
K(k')}{K(k)}\right)}\right]\\
\nonumber\\
&=&\displaystyle\frac{4\pi}{L}\displaystyle\sum_{m=1}^{+\infty}\displaystyle\frac{\sinh\displaystyle\left(\displaystyle\frac{2m\pi\delta}{L}\right)}{\sinh\displaystyle\left(\displaystyle\frac{m\pi
K(k')}{K(k)}\right)}\cdot\cos\displaystyle\left(\displaystyle\frac{2m\pi x}{L}\right),\nonumber
\end{eqnarray} that is,
\begin{eqnarray}\label{ilw.54}\phi_{\varsigma}(x)=a+\displaystyle\frac{4\pi}{L}\displaystyle\sum_{m=1}^{+\infty}\displaystyle\frac{\sinh\displaystyle\left(\displaystyle\frac{2m\pi\delta}{L}\right)}{\sinh\displaystyle\left(\displaystyle\frac{m\pi
K(k')}{K(k)}\right)}\cdot\cos\displaystyle\left(\displaystyle\frac{2m\pi x}{L}\right).
\end{eqnarray}
So, the periodic Fourier transform related to the function $\phi_{\varsigma}$ is expressed by
$\widehat{\phi_{\varsigma}}(0)=a$ and
\begin{equation}\label{ilw.55}\widehat{\phi_{\varsigma}}(m)=\displaystyle\frac{2\pi}{L}\cdot\displaystyle\frac{\sinh\displaystyle\left(\displaystyle\frac{2m\pi\delta}{L}\right)}{\sinh\displaystyle\left(\displaystyle\frac{m\pi
K(k')}{K(k)}\right)},\;\;\;\;\ \mbox{for all}\ m\in\mathbb{Z}-\{0\}.
\end{equation}

Letting $$\nu:=\displaystyle\frac{2\pi\delta}{L}\ \ \ \textrm{and}\
\ \ \mu:=\displaystyle\frac{\pi K(k')}{K(k)},
$$
we obtain from (\ref{ilw.4}) immediately that $0<\nu<\mu.$ On the other hand, by considering
\begin{eqnarray}\label{ilw.55.1}Q(x):=\displaystyle\frac{\sinh(\nu x)}{\sinh(\mu
x)},\qquad x\neq 0,
\end{eqnarray}
we see that
\begin{eqnarray}\label{ilw.56.1}\displaystyle\frac{d^2}{dx^2}\displaystyle\left[\log(Q(x))\right]<0,\qquad
\forall\ x\neq 0.\end{eqnarray}
Therefore,  we obtain that $Q\in PF(2)$-continuous (see \cite{albert1}). In addition, we obtain  the following specific calculation to be used below,
\begin{eqnarray}\label{ilw.56.2}\displaystyle\lim_{x\rightarrow 0}\displaystyle\frac{2\pi}{L}\cdot\displaystyle\frac{\sinh\displaystyle\left(\displaystyle\frac{2\pi\delta x}{L}\right)}{\sinh\displaystyle\left(\displaystyle\frac{\pi K(k')x}{K(k)}\right)}=\displaystyle\frac{4\pi\delta
K(k)}{L^2K(k')}.\end{eqnarray}

Next, the  following picture show us that the function
$$
a(k)-\displaystyle\frac{2\pi}{L}\cdot v(L,\delta,k)\equiv a(k)-2\cdot
v(\pi,1,k)
$$
for $k\in(0,k_1)$, it is strictly positive mapping.

\begin{figure}[!h]\begin{center}
\includegraphics[width=6.0cm,height=6.0cm]{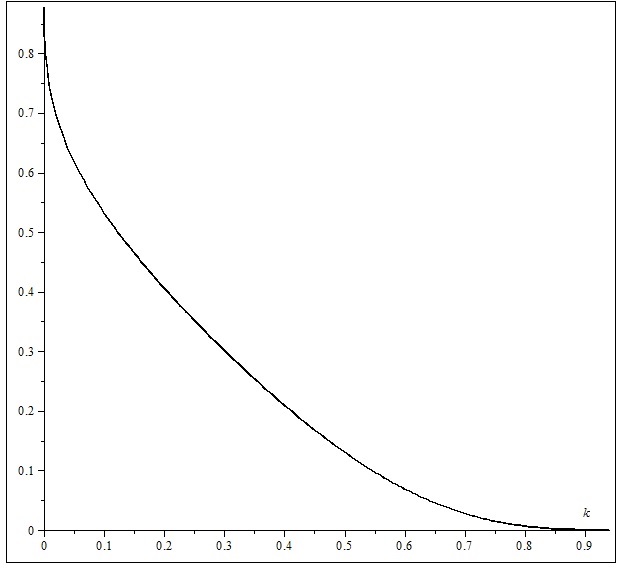}
\end{center}
\end{figure}
Therefore, we obtain for all $k\in (0,k_1)$ the relation
\begin{eqnarray}\label{ilw.57}a>\displaystyle\frac{4\pi\delta
K(k)}{L^2K(k')}=\displaystyle\frac{2\pi}{L}\cdot
v(L,\delta,k).
\end{eqnarray}
Hence, the statements (\ref{ilw.55.1})-(\ref{ilw.57}) allow us to define a smooth function
$\tau:\mathbb{R}\rightarrow\mathbb{R}$ such that
$$
\tau(x):=\displaystyle\frac{2\pi Q(x)}{L},\quad\quad \forall\ x\in
(-\infty,-1]\cup[1,+\infty)
$$
$\tau(0)=a$ and $\tau $ in $(-1, 1)$ such that $\tau\in PF(2)$ continuous. Therefore, we can conclude that
\begin{eqnarray*}
\{\widehat{\phi_{\varsigma}}(m)\}_{m\in\mathbb{Z}}\in PF(2) \ \textrm{discrete}.
\end{eqnarray*}

Hence, from Theorem \ref{teonatali} we obtain that the linear operator $\mathcal{L}_{\varsigma,\delta}=\mathcal{M}_{\delta}+\varsigma-2\phi_{\varsigma}$ admits exactly one negative eigenvalue which is simple and zero is also a simple eigenvalue whose correspondent eigenfunction is $\frac{d}{dx}\phi_{\varsigma}$. Lastly,  we analyze the operator $\mathcal L$. Indeed, since
\begin{eqnarray}
\label{ilw.58}\mathcal{L}_{\varsigma,\delta}=\mathcal{M}_{\delta}+\varsigma-2\phi_{\varsigma}=\mathcal{M}_{\delta}+(c+2a)-(2\varphi_c+2a)
=\mathcal{M}_{\delta}+c-2\varphi_c=\mathcal{L},
\end{eqnarray}
then we obtain
\begin{eqnarray}\label{ilw.59}
\ker(\mathcal{L})=\ker(\mathcal{L}_{\varsigma,\delta})=\Big [\frac{d}{dx}\phi_{\varsigma}\Big]=\Big[\frac{d}{dx}\varphi_{c}\Big],\;\;\;\;\text{and}\;\;\;\;n(\mathcal{L})=1.
\end{eqnarray}
This finishes the Theorem.

\begin{flushright} $\square$ \end{flushright}

 \begin{obs}\label{restri} To study the behaviour of the function $a=a(k)$ in $(\ref{ilw.52})$ in order to determine that $(\ref{ilw.52.1})$ holds for arbitrary values of  $L$ and $\delta$ will induce  enormous technical difficulties if we do not use numerical simulations for fixing values of $L$ and $\delta$. Maple 16 enable us to conclude that $(\ref{ilw.52.1})$ remains still valid for general values of $L$ and $\delta$ satisfying the analytic condition in (\ref{ilw.4}). As a consequence, the results in Theorem $\ref{ilw.spec}$ can be established for general values of $L$ and $\delta$.
 \end{obs}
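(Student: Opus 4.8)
The goal is to establish the pointwise inequality (\ref{ilw.52.1}), $a(k) > -\varphi_{c(k)}(L/2)$, for every admissible pair $(L,\delta)$ (those satisfying (\ref{ilw.4})) rather than only for $L=\pi,\ \delta=1$; once this holds, the translation $\phi_\varsigma = a+\varphi_c>0$, its evenness, and the verification that $\{\widehat{\phi_\varsigma}(m)\}\in PF(2)$ go through verbatim, so Theorem \ref{ilw.spec} extends to general $(L,\delta)$. My first step is a purely algebraic reduction that removes the square root in (\ref{ilw.52}). Set $P := -\varphi_c(L/2) = -\min_{[0,L]}\varphi_c > 0$ (recall $\varphi_c$ has mean zero, so its minimum, attained at $L/2$, is negative). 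Since $a$ is the larger root of $g(t) := t^2+ct-R$ and $R>0$ forces the other root $a_-$ to satisfy $a_- < 0 < P$, we have $a>P$ if and only if $P$ lies strictly between the two roots, i.e. if and only if $g(P)<0$. Thus (\ref{ilw.52.1}) is equivalent to
\be\label{prop-red1}
R > P^2 + cP = \varphi_c(L/2)^2 - c\,\varphi_c(L/2).
\ee

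Second, I would feed the profile equation into (\ref{prop-red1}). Evaluating (\ref{travkdv}) at $x=L/2$ and using $A_c = R$ from Theorem \ref{exis},
\[
\varphi_c(L/2)^2 - c\,\varphi_c(L/2) = R + \mathcal{M}_\delta\varphi_c(L/2).
\]
Substituting into (\ref{prop-red1}), the constant $R$ cancels and the whole inequality (\ref{ilw.52.1}) collapses to the single clean condition
\be\label{prop-red2}
\mathcal{M}_\delta\varphi_c(L/2) < 0.
\ee
This is the form I would actually try to prove, since it no longer involves $a$, $R$, or the monstrous closed form (\ref{ilw.46}) of $N(k)$.

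Third, I would compute (\ref{prop-red2}) on the Fourier side. The symbol of $\mathcal{M}_\delta = \mathcal{T}_\delta\partial_x - \tfrac1\delta$ is $\theta_\delta(m) = \frac{2\pi m}{L}\coth(2\pi m\delta/L) - \frac1\delta$, which is even in $m$ with $\theta_\delta(0)=0$, and which is strictly positive for $m\ge 1$ because $\coth t > 1/t$ for $t>0$. Applying $\mathcal{M}_\delta$ to the cosine series (\ref{ilw.53}) and evaluating at $x=L/2$ (so that $\cos(m\pi)=(-1)^m$) turns (\ref{prop-red2}) into the requirement that the alternating series
\be\label{prop-dagger}
\sum_{m=1}^{+\infty}(-1)^{m+1}\,\theta_\delta(m)\,b_m, \qquad b_m := \frac{\sinh(2m\pi\delta/L)}{\sinh(m\pi K(k')/K(k))},
\ee
be strictly positive. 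Here $b_m>0$, the series converges absolutely since $\theta_\delta(m)\sim \frac{2\pi m}{L}$ while $b_m$ decays exponentially, and condition (\ref{ilw.4}) reads exactly $\nu := 2\pi\delta/L < \mu := \pi K(k')/K(k)$.

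The main obstacle is (\ref{prop-dagger}). Writing $c_m := \theta_\delta(m)b_m > 0$, its continuous interpolant behaves like $x^2$ near $0$ and like $x\,e^{-(\mu-\nu)x}$ at infinity, hence is unimodal rather than monotone; consequently the naive sign test for alternating series (which would need $c_m$ decreasing) is unavailable, and $c_2>c_1$ can occur. I would instead pair consecutive terms and bound $\sum(-1)^{m+1}c_m$ from below through control of the ratios $c_{m+1}/c_m$, complementing this with the two endpoint regimes: as $k\to 0^+$ one has $\mu\to\infty$, so the series is dominated by the positive $m=1$ term; as $k\to k_1^-$ one has $\nu\to\mu$, and an Abel-type summation of the leading large-$m$ part yields a strictly positive limit of order $\tfrac{2\pi}{L}\,\frac{r}{(1+r)^2}$ with $r=e^{-(\mu-\nu)}\to 1$. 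The genuine difficulty — and the reason the authors verify (\ref{ilw.52.1}) numerically with Maple — is to make these estimates uniform over the entire admissible region $\{(L,\delta,k): v(L,\delta,k)<1\}$ rather than at a single fixed $(L,\delta)$.
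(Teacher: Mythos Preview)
The paper offers no analytical argument for this remark: it simply reports that Maple~16 plots confirm (\ref{ilw.52.1}) for other choices of $L$ and $\delta$. Your proposal goes well beyond that. The two-step reduction --- first observing that $a>P:=-\varphi_c(L/2)$ is equivalent to $g(P)=P^2+cP-R<0$ since $a_-<0<P<a$ iff $g(P)<0$, and then evaluating the profile equation (\ref{travkdv}) at $x=L/2$ to cancel $R$ --- is correct and yields the genuinely clean equivalent formulation $\mathcal{M}_\delta\varphi_c(L/2)<0$. Passing to Fourier and using the cosine expansion (\ref{ilw.53}) to rewrite this as positivity of the alternating series (\ref{prop-dagger}) is also correct. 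This is a different and more informative route than the paper's bare numerics; even if one ultimately still checks things by machine, verifying the sign of a single rapidly convergent series is far more transparent than comparing the closed forms (\ref{ilw.52}) and (\ref{ilw.39}).

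That said, the proposal is not a proof, for two reasons. First --- and you acknowledge this --- the positivity of (\ref{prop-dagger}) is not established: the coefficients $c_m=\theta_\delta(m)b_m$ are unimodal rather than monotone, the endpoint heuristics you sketch at $k\to 0^+$ and $k\to k_1^-$ are plausible but not carried out, and no uniform bound over the full region $\{v(L,\delta,k)<1\}$ is given. So at the end you are in the same position as the paper, deferring to numerics. Second, your claim that once (\ref{ilw.52.1}) holds ``the verification that $\{\widehat{\phi_\varsigma}(m)\}\in PF(2)$ goes through verbatim'' overlooks that the paper's $PF(2)$ argument also uses the separate numerical inequality (\ref{ilw.57}), namely $a(k)>\tfrac{2\pi}{L}\,v(L,\delta,k)$, which is needed to interpolate $\widehat{\phi_\varsigma}(0)=a$ smoothly into the log-concave profile $\tfrac{2\pi}{L}Q(x)$. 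Your reduction addresses only (\ref{ilw.52.1}); to extend Theorem~\ref{ilw.spec} to general $(L,\delta)$ you must either also treat (\ref{ilw.57}) or give an alternative $PF(2)$ argument that bypasses it.
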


\section{Linear Stability for the ILW-Equation}

\indent

In this section we establish our linear stability result for the mean zero traveling wave $\varphi_{c}$ in (\ref{solilw}). For the convenience of the reader we will give some definitions and specific  sufficient conditions for obtaining our  linear stability result (see \cite{DK} and \cite{haragus}).

We start our study by establishing some definitions associated to the operator $\partial_x \mathcal L|_{H_0}$, with $\mathcal L= \mathcal L_{c,\delta}$ in (\ref{linop}) and $H_0$  in (\ref{zero}).

\begin{defi}\label{K1}
We define,
\begin{enumerate}
\item $k_r$ as the number of positive real eigenvalues  (counting
multiplicities) of the operator $\partial_x \mathcal L|_{H_0}$.
\item  $k_c$ indicates the number of complex-valued eigenvalues with a positive real part (counting
multiplicities) of the operator $\partial_x \mathcal L|_{H_0}$.
\item For $B$ a linear operator with domain $D(B)$,  we define the linear operator $Im(B)u\equiv Im (Bu)$ for $u\in D(B)$.
\end{enumerate}
\end{defi}

We note immediately from the later Definition, that since $Im(\mathcal L)\equiv 0$ then  $k_c$ is an even
integer.  Next, for a self-adjoint operator $\mathcal{A}$, we denote by  $n(\langle
w,\mathcal{A}w\rangle)$  the dimension of the maximal subspace for
which $\langle w,\mathcal Aw\rangle<0$ (Morse index of $\mathcal A$). Also, let $\lambda$ be an
eigenvalue for $\partial_x \mathcal L$ and $E_{\lambda}$ its corresponding eigenspace. The
eigenvalue is said to have negative Krein signature if
$$k_i^{-}(\lambda):=n(\langle w,(\mathcal{L}\big|_{H_0})\big|_{E_{\lambda}}w\rangle)\geq1,$$
otherwise, if $k_i^{-}=0$, then the eigenvalue is said to have a positive Krein signature. If $\lambda$ is a geometrically and algebraically simple eigenvalue for $\partial_x \mathcal L$ with  eigenfunction $\psi_{\lambda}$ then $E_\lambda=[\psi_\lambda]$, and so
$$k_i^{-}(\lambda)=\left\{\begin{array}{llll}
0,\ \langle \psi_{\lambda},(\mathcal{L}\big|_{H_0})\psi_{\lambda}\rangle>0\\
1,\ \langle \psi_{\lambda},(\mathcal{L}\big|_{H_0})\psi_{\lambda}\rangle<0.\end{array}
\right.$$

The total Krein signature is given by
$k_i^{-}:=\sum _{\lambda\in i\mathbb{R}\backslash\{0\}}k_{i}^{-}(\lambda).$
Since ${\rm Im}(\mathcal{L})=0$ we obtain that $k_i^{-}$ is an even integer.\\

\begin{defi}\label{K2}
The Hamiltonian-Krein index associated to the operator $\partial_x \mathcal L$ is the following non-negative integer
$$
\mathcal{K}_{{\rm Ham}}=k_r+k_c+k_{i}^{-}.
$$
\end{defi}

Next, let us consider the quantity
\be\label{I}
\mathcal{I}=\langle \mathcal{L}^{-1}1,1\rangle.
\ee
 We also note that for any $f\in ker(\mathcal L)^\bot$ the quantity $ \langle \mathcal{L}^{-1}f,f\rangle$ is always independent of $h\in  \mathcal{L}^{-1}f$. Now, we denote by $\mathcal{D}$  the $2\times 2-$matrix given by
\be\label{Dmatrix}
\mathcal{D}=\frac{1}{\langle\mathcal{L}^{-1}1,1\rangle}\left[\begin{array}{llll}
\langle\mathcal{L}^{-1}\varphi_{c},\varphi_{c}\rangle & & \langle\mathcal{L}^{-1}\varphi_{c},1\rangle\\\\
\langle\mathcal{L}^{-1}\varphi_{c},1\rangle & & \langle\mathcal{L}^{-1}1,1\rangle\end{array}\right].
\ee
Then, from \cite{DK} and \cite{haragus} we have  the following results:
\begin{teo}\label{krein}
Suppose that $ker(\mathcal L)=[\frac{d}{dx} \varphi_{c}]$. If $\mathcal{I}\neq0$ and
$\mathcal{D}$ is non-singular we have for the eigenvalue problem in
$(\ref{modspecp1})$ the following relation
$$
\mathcal{K}_{{\rm Ham}}=n(\mathcal{L})-n(\mathcal{I})-n(\mathcal{D}).
$$
\end{teo}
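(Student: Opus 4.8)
The plan is to reduce the count of the Hamiltonian--Krein index $\mathcal{K}_{{\rm Ham}}$ for $\partial_x\mathcal{L}|_{H_0}$ to a purely symplectic-linear-algebra computation on the finite-dimensional ``constraint'' data attached to $\mathcal{L}$. This is exactly the setup of the abstract instability-index theorems of Kapitula--Kevrekidis--Sandstede as refined by Deconinck--Kapitula \cite{DK} and Haragus--Kapitula \cite{haragus}, so the proof is essentially a verification that our operator $\partial_x\mathcal{L}|_{H_0}$ fits their hypotheses, followed by an identification of the relevant matrices with $\mathcal{I}$ and $\mathcal{D}$.

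First I would set the functional-analytic stage: on the mean-zero space $H_0$ the operator $J:=\partial_x$ is skew-adjoint with dense range and trivial kernel (the constants, killed by passing to $H_0$), and $\mathcal{L}=\mathcal{L}_{c,\delta}$ is self-adjoint, bounded below, with compact resolvent and, by Theorem \ref{ilw.spec}, exactly one simple negative eigenvalue, a simple kernel spanned by $\varphi_c'$, and the rest of the spectrum bounded away from zero; in particular $n(\mathcal{L})=1$ and $\dim\ker(\mathcal{L})=1$ as an operator on $H_0$ (note $\varphi_c'\in H_0$ automatically, and $1\notin H_0$ plays the role of the extra symmetry direction). The abstract theorem then asserts
\be
\mathcal{K}_{{\rm Ham}}=k_r+k_c+k_i^- = n(\mathcal{L}) - n(D),
\ee
where $D$ is the symmetric matrix whose entries are the second variations of the conserved quantities, evaluated on the generalized-kernel directions of $J\mathcal{L}$, restricted to the constraint subspace. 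Concretely, the generalized kernel of $\partial_x\mathcal{L}|_{H_0}$ is spanned by $\varphi_c'$ (from translation) and by the solution of $\mathcal{L}g=\varphi_c$ (the ``$\partial_c$'' direction), while the constraint coming from working in $H_0$ rather than all of $L^2$ is precisely $\langle\cdot,1\rangle=0$; the matrix built from $\langle\mathcal{L}^{-1}\varphi_c,\varphi_c\rangle$, $\langle\mathcal{L}^{-1}\varphi_c,1\rangle$, $\langle\mathcal{L}^{-1}1,1\rangle$ is exactly $\mathcal{D}$ of \eqref{Dmatrix} after the normalization by $\mathcal{I}=\langle\mathcal{L}^{-1}1,1\rangle$.

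The key steps, in order, would be: (i) check the spectral hypotheses on $\mathcal{L}$ (done, via Theorem \ref{ilw.spec}) and that $J|_{H_0}$ has the required mapping properties; (ii) identify $\ker(\mathcal{L})^\perp$ so that $\mathcal{L}^{-1}$ makes sense on $1$ and on $\varphi_c$ — here one needs $\varphi_c,1\in\ker(\mathcal{L})^\perp=[\varphi_c']^\perp$, which holds because $\varphi_c$ and $1$ are even while $\varphi_c'$ is odd; (iii) invoke the Deconinck--Kapitula/Haragus--Kapitula index formula, whose hypotheses require precisely $\mathcal{I}\neq 0$ and $\mathcal{D}$ nonsingular (the genericity/non-degeneracy assumptions that guarantee no eigenvalue has wandered to the origin and the constraint matrix is invertible); (iv) read off $\mathcal{K}_{{\rm Ham}}=n(\mathcal{L})-n(\mathcal{I})-n(\mathcal{D})$, using additivity of the Morse index under the block structure of $\mathcal{D}$ relative to the one-dimensional subspace $[1]$, which splits $n(\mathcal{D})$ into the contribution $n(\mathcal{I})$ from the $\langle\mathcal{L}^{-1}1,1\rangle$ block and the Schur-complement contribution. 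I expect the main obstacle to be step (iv): correctly bookkeeping the two distinct sources of negative directions — the intrinsic Morse index $n(\mathcal{L})$ and the ``lost'' direction due to restricting to $H_0$, encoded by the sign of $\mathcal{I}$ — and showing that the net effect is the clean subtraction $-n(\mathcal{I})-n(\mathcal{D})$ rather than $-n(\mathcal{D})$ alone; this is a Schur-complement / index-of-a-bordered-matrix argument that must be carried out carefully, and it is where the hypotheses $\mathcal{I}\neq 0$ and $\det\mathcal{D}\neq 0$ are genuinely used. Everything else is a translation of our concrete operators into the abstract framework already developed in \cite{DK} and \cite{haragus}.
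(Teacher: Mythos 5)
The paper does not prove this theorem at all: it is quoted verbatim from Deconinck--Kapitula \cite{DK} and Haragus--Kapitula \cite{haragus} (the sentence immediately preceding the statement is ``Then, from \cite{DK} and \cite{haragus} we have the following results''). Your proposal takes essentially the same route --- reduce to the abstract Hamiltonian--Krein index theorem of those references and verify its hypotheses --- so in that sense it matches the paper, and your verification steps (i)--(iii) (spectral data of $\mathcal{L}$ from Theorem \ref{ilw.spec}, solvability of $\mathcal{L}h=1$ and $\mathcal{L}h=\varphi_c$ on $[\varphi_c']^{\perp}$ by parity, the nondegeneracy hypotheses $\mathcal{I}\neq 0$ and $\det\mathcal{D}\neq 0$) are correct and are exactly what the paper implicitly relies on.

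One caveat on your step (iv): you describe the term $-n(\mathcal{I})$ as arising from splitting the $2\times 2$ matrix $\mathcal{D}$ into a $\langle\mathcal{L}^{-1}1,1\rangle$ block plus a Schur complement. That is not how the count is organized in \cite{DK}: after the normalization by $1/\mathcal{I}$ in (\ref{Dmatrix}) the lower-right entry of $\mathcal{D}$ is identically $1$, so no information about the sign of $\mathcal{I}$ survives inside $\mathcal{D}$. The $-n(\mathcal{I})$ term comes from a separate, prior step --- the standard index formula for a self-adjoint operator restricted to the codimension-one constraint space $H_0=\{\langle\cdot,1\rangle=0\}$, which gives $n(\mathcal{L}|_{H_0})=n(\mathcal{L})-n(\mathcal{I})$ --- and only afterwards does the generalized-kernel matrix $\mathcal{D}$ remove the remaining $n(\mathcal{D})$ directions. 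You do state the correct conceptual picture in your closing sentence (two distinct sources of negative directions), but the Schur-complement mechanism you propose for extracting $n(\mathcal{I})$ from $\mathcal{D}$ would not work as written; the two subtractions must be performed in sequence, not read off from one bordered matrix.
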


We recall that $n(\mathcal{I})=0\Leftrightarrow\mathcal{I}>0$ and $n(\mathcal{I})=1\Leftrightarrow\mathcal{I}<0$. An immediate consequence of Theorem \ref{krein} is the following criterium of linear stability.

\begin{coro}\label{coroest}
Under the assumptions of Theorem $\ref{krein}$, if $k_c=k_r=k_{i}^{-}=0$ then the periodic wave $\varphi_{c}$ is linearly stable. In addition, if $\mathcal{K}_{{\rm Ham}}=1$ then  the refereed periodic wave is linearly unstable.
\end{coro}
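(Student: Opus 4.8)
The plan is to deduce both assertions from the bookkeeping already in place, using two structural facts recorded above: since $\mathrm{Im}(\mathcal L)\equiv 0$ the integers $k_c$ and $k_i^-$ are both even, and since (\ref{vlinear}) is a real Hamiltonian equation the spectrum $\sigma:=\sigma(\partial_x\mathcal L|_{H_0})$ is symmetric under reflection in the real axis and in the imaginary axis. I would also recall that $\mathcal L$ is self-adjoint with compact resolvent (Theorem~\ref{ilw.spec}), so $\partial_x\mathcal L|_{H_0}$ has discrete spectrum, consisting of isolated eigenvalues of finite algebraic multiplicity.

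\emph{Stability.} Assume $k_c=k_r=k_i^-=0$, so that $\mathcal K_{{\rm Ham}}=0$ by Definition~\ref{K2}. Suppose, for contradiction, that some $\lambda\in\sigma$ has $\mathrm{Re}(\lambda)\neq 0$. By the reflection symmetry we may assume $\mathrm{Re}(\lambda)>0$; but then $\lambda$ is counted by $k_r$ if $\lambda\in\mathbb R$ and by $k_c$ if $\lambda\notin\mathbb R$, contradicting $k_r=k_c=0$. Hence $\sigma\subset i\mathbb R$, which is precisely spectral stability in the sense of Definition~\ref{defspe}. To promote this to linear stability one uses $k_i^-=0$: every nonzero purely imaginary eigenvalue then carries strictly positive Krein signature, and by the Krein-signature argument underlying the results of \cite{DK} and \cite{haragus} such eigenvalues are semisimple, so the linearized flow $t\mapsto e^{t\partial_x\mathcal L}$ on $H_0$ is uniformly bounded and $\varphi_c$ is linearly stable.

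\emph{Instability.} Assume $\mathcal K_{{\rm Ham}}=1$. Writing $1=k_r+k_c+k_i^-$ with $k_c$ and $k_i^-$ even forces $k_c=k_i^-=0$ and $k_r=1$. Thus $\partial_x\mathcal L|_{H_0}$ possesses a simple positive real eigenvalue $\lambda>0$ with some eigenfunction $\psi\in H_0$; by elliptic regularity (the symbol $\theta_\delta$ grows at infinity) one has $\psi\in H^n_{per}([0,L])$ for every $n$, so $v(x,t)=e^{\lambda t}\psi(x)$ is a genuine solution of (\ref{vlinear}) that grows exponentially in $t$. Hence $\varphi_c$ is linearly, and in particular spectrally, unstable.

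The parity bookkeeping and the reflection-symmetry argument are routine; the one step that genuinely uses Krein theory is the implication ``$\sigma\subset i\mathbb R$ together with $k_i^-=0$'' $\Rightarrow$ semisimplicity of the purely imaginary spectrum, hence boundedness of the linearized semigroup. This is where I would quote the corresponding statements in \cite{DK} and \cite{haragus} rather than reprove them; if one is content with spectral stability in the sense of Definition~\ref{defspe}, that step can be omitted entirely.
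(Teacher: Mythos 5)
Your proposal is correct and follows essentially the same route as the paper: the instability half is the identical parity argument ($k_c$ and $k_i^-$ even and nonnegative force $k_r=1$, hence a positive real eigenvalue and exponential growth), and for the stability half you, like the paper, ultimately defer the decisive Krein-signature/semisimplicity step to Theorem 2.7 of \cite{DK} and to \cite{haragus}. The extra bookkeeping you supply (reflection symmetry giving $\sigma\subset i\mathbb R$, regularity of the eigenfunction) is a harmless elaboration of what the paper leaves implicit.
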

\begin{proof}
The first part of the corollary is an immediate consequence of
Theorem 2.7 in \cite{DK} (see also \cite{haragus}). Now, if $\mathcal{K}_{{\rm
Ham}}=1$ we deduce that $k_r=1$ since $k_c$ and $k_{i}^{-}$ are even
nonnegative integers. Then,  the
spectral problem in $(\ref{modspecp1})$ has a positive eigenvalue which
able us to deduce the linear instability of the periodic wave $\varphi_{c}$.
\end{proof}

\indent Next we establish our linear stability result associated to the periodic traveling wave $\varphi_{c}$ in (\ref{ilw.30}). Since our study will be based on Theorem \ref{krein}, the value of $\mathcal{K}_{{\rm Ham}}$ must be calculated. From Theorem \ref{ilw.spec} we have that $n(\mathcal{L})=1$. Next will prove that $n(\mathcal{D})=1$ and $n(\mathcal{I})=0$ by considering the case of $c$ being positive by technical reasons. For obtaining these quantities we will need to calculate some expressions for $\mathcal{I}$ and $\det(\mathcal{D})$ in terms of the Jacobi elliptic functions. More explicitly, we will obtain (see propositions below) the following explicit formulas:
\begin{equation}\label{nI}
\mathcal{I}=\langle\mathcal{L}^{-1}1,1\rangle=\displaystyle\frac{L^2}{cL+2\displaystyle\frac{\partial}{\partial c}\displaystyle\left[\displaystyle\int_0^L\varphi^2_{c}(x)dx\right]},\end{equation}
and \be\label{detD}
\det(\mathcal{D})=-\frac{1}{2}\frac{1}{\mathcal{I}}\frac{\partial}{\partial
c}\int_0^L \varphi_c^2(x)dx.
\ee
Thus, we will prove that  $\frac{\partial}{\partial c}\int_0^L\varphi_c^2(x)dx>0$ and therefore
$\mathcal{I}>0$ and $\det(\mathcal{D})<0$. Hence, $n(\mathcal{I})=0$ and
$n(\mathcal{D})=1$. Therefore, from Theorem \ref{ilw.spec} and Theorem \ref{krein} we conclude that $\mathcal{K}_{\rm
Ham}=0$. Then, by Corollary $\ref{coroest}$ one has that  the periodic wave $\varphi_c$ is linearly stable. Formally, we have the following  linear stability result.

\begin{teo}\label{t.ilw.4.1} Consider $c>0$.  The periodic traveling waver $\varphi_{c}$ in $(\ref{ilw.30})$ is  linearly stable  for the ILW equation.
\end{teo}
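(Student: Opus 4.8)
The plan is to determine the Hamiltonian--Krein index $\mathcal{K}_{{\rm Ham}}$ of $\partial_x\mathcal{L}|_{H_0}$ and then appeal to Corollary \ref{coroest}. Theorem \ref{ilw.spec} already supplies the three ingredients needed to apply Theorem \ref{krein}: $\ker(\mathcal{L})=[\frac{d}{dx}\varphi_c]$, the rest of $\sigma(\mathcal{L})$ is isolated and bounded away from $0$, and $n(\mathcal{L})=1$. Since $\varphi_c$ has mean zero and is $L$-periodic, both $1$ and $\varphi_c$ lie in $\ker(\mathcal{L})^{\perp}$, so $\mathcal{I}=\langle\mathcal{L}^{-1}1,1\rangle$ and the matrix $\mathcal{D}$ of $(\ref{Dmatrix})$ are well defined. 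The target is then to show $\mathcal{I}>0$ and $\det\mathcal{D}<0$; this gives $n(\mathcal{I})=0$, $n(\mathcal{D})=1$, hence $\mathcal{K}_{{\rm Ham}}=n(\mathcal{L})-n(\mathcal{I})-n(\mathcal{D})=1-0-1=0$, so $k_r=k_c=k_i^-=0$ and Corollary \ref{coroest} yields linear stability.

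To pin down the two signs I would first establish the representation formulas $(\ref{nI})$ and $(\ref{detD})$ by differentiating the profile equation $(\ref{travkdv})$ with respect to its parameters. Embed $\varphi_c$ in the two-parameter family $\Phi=\Phi_{c,A}$ of even $L$-periodic solutions of $\mathcal{M}_\delta\Phi+c\Phi-\Phi^2=-A$, with $\varphi_c=\Phi_{c,-A_c}$; its local existence and smoothness follow from the implicit function theorem, using that $\mathcal{L}$ is invertible on $\ker(\mathcal{L})^{\perp}$ by Theorem \ref{ilw.spec}. Differentiating in $c$ and in $A$ gives $\mathcal{L}\,\partial_c\Phi=-\Phi$ and $\mathcal{L}\,\partial_A\Phi=-1$, so that $\mathcal{L}^{-1}1=-\partial_A\Phi$ and $\mathcal{L}^{-1}\varphi_c=-\partial_c\Phi$ modulo $\ker(\mathcal{L})$. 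Pairing with $1$ and $\varphi_c$ turns every entry of $\mathcal{D}$ and the scalar $\mathcal{I}$ into first derivatives of $\int_0^L\Phi\,dx$ and $\int_0^L\Phi^2\,dx$; feeding in the normalization $\int_0^L\varphi_c\,dx=0$, the relation $A_c=\frac{1}{L}\int_0^L\varphi_c^2\,dx$, and the chain rule that returns from the $(c,A)$-parametrization to the genuine curve $c\mapsto\varphi_c$, one recovers exactly $(\ref{nI})$ and $(\ref{detD})$.

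With these at hand the whole theorem collapses to one inequality, $\frac{\partial}{\partial c}\int_0^L\varphi_c^2(x)\,dx>0$. Granting it and using $c>0$, the denominator $cL+2\,\partial_c\!\int_0^L\varphi_c^2\,dx$ of $(\ref{nI})$ is strictly positive, so $\mathcal{I}>0$ and $n(\mathcal{I})=0$; then $(\ref{detD})$ gives $\det\mathcal{D}=-\frac{1}{2}\,\mathcal{I}^{-1}\,\partial_c\!\int_0^L\varphi_c^2\,dx<0$, so $n(\mathcal{D})=1$. To verify the inequality, recall that the wave is parametrized by the elliptic modulus $k$ via $c=c(k)$, so $\partial_c\!\int_0^L\varphi_c^2\,dx=N'(k)/c'(k)$ with $N(k)=\int_0^L\varphi_{c(k)}^2\,dx$ given in closed form by $(\ref{ilw.46})$. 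Since $c'(k)>0$ on $(0,k_1)$ was recorded in Section 3, it remains to check $N'(k)>0$, which one extracts by differentiating the elliptic-integral expression $(\ref{ilw.46})$ and using standard monotonicity identities for $K$, $E$ and the Heuman lambda function $\Lambda_0$, confirming the sign numerically for $L=\pi$, $\delta=1$ as is done elsewhere in the paper.

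I expect the sign of $\partial_c\!\int_0^L\varphi_c^2\,dx$ to be the main obstacle: the closed form $(\ref{ilw.46})$ for $N(k)$ is a heavy combination of $K(k)$, $E(k)$, $\Lambda_0(\psi,k)$ and the auxiliary quantities $m_1,\dots,m_4$ and $\alpha$, so a clean analytic proof of $N'(k)>0$ uniform in the admissible $(L,\delta)$ is delicate --- which is precisely why the statement is restricted to $c>0$ (making the denominator of $(\ref{nI})$ transparently positive once $\partial_c N\ge0$) and why numerical verification is invoked, as flagged in Remark \ref{restri}. The remaining technical points --- the existence and smoothness of the family $\Phi_{c,A}$ and the bookkeeping with $\mathcal{L}^{-1}$ on $\ker(\mathcal{L})^{\perp}$ --- are routine thanks to the one-dimensional kernel and the spectral gap provided by Theorem \ref{ilw.spec}.
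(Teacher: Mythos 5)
Your proposal is correct and its skeleton coincides with the paper's: compute $\mathcal{K}_{\rm Ham}=n(\mathcal{L})-n(\mathcal{I})-n(\mathcal{D})$ via Theorem \ref{krein}, feed in $n(\mathcal{L})=1$ from Theorem \ref{ilw.spec}, show $\mathcal{I}>0$ and $\det\mathcal{D}<0$, and invoke Corollary \ref{coroest}. Two sub-steps differ. First, to derive $(\ref{nI})$ and $(\ref{detD})$ you build a two-parameter family $\Phi_{c,A}$ and use $\mathcal{L}\partial_A\Phi=-1$, $\mathcal{L}\partial_c\Phi=-\Phi$; this works (restricting to even functions so that the odd kernel element $\frac{d}{dx}\varphi_c$ does not obstruct the implicit function theorem), but the paper gets the same formulas more cheaply, with no auxiliary family: since $\mathcal{M}_\delta(1)=0$ one has the pointwise identity $\mathcal{L}(1)=c-2\varphi_c$, hence $1=c\mathcal{L}^{-1}1-2\mathcal{L}^{-1}\varphi_c$, and differentiating $(\ref{travkdv})$ along the actual one-parameter curve (where $A_c=\frac{1}{L}\int_0^L\varphi_c^2$ varies with $c$) gives $\mathcal{L}(\partial_c\varphi_c)=-\varphi_c-\frac{1}{L}\frac{d}{dc}\|\varphi_c\|^2$; pairing these two relations with $1$ and $\varphi_c$ and using $\varphi_c\in H_0$ yields Propositions \ref{form2} and \ref{form3} directly. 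Second, and more substantively, for the key inequality $\frac{\partial}{\partial c}\int_0^L\varphi_c^2\,dx>0$ you propose differentiating the elliptic closed form $(\ref{ilw.46})$ and falling back on numerics; the paper instead applies Plancherel to the Fourier cosine expansion $(\ref{ilw.53})$ to get the series $(\ref{ilw.33.2})$, $N(k)=\frac{8\pi^2}{L}\sum_{m\ge1}\sinh^2(2m\pi\delta/L)\,\sinh^{-2}\bigl(m\pi K(k')/K(k)\bigr)$, each term of which is manifestly increasing in $k$ because $\frac{d}{dk}[K(k')/K(k)]<0$. This gives $N'(k)>0$ analytically and for arbitrary $L,\delta$ --- the one genuinely clever move you miss; the only numerical input the paper then needs at this point is $c'(k)>0$, exactly as in your chain rule $\partial_c N=N'(k)/c'(k)$.
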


The focus of the following propositions will be to show that $\mathcal{I}>0$ and $\det(\mathcal{D})<0$. We recall that for convenience in the exposition we are considering $L=\pi$ and $\delta=1$. We start by establishing the following main result.

\begin{prop}\label{form1}  For $c>0$ one has $\frac{\partial}{\partial c}\int_0^L\varphi_c^2(x)dx>0$. \end{prop}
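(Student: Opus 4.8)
The plan is to reduce the claim, via the parametrization of the wave family by the elliptic modulus $k$, to a monotonicity statement that can be read off the explicit elliptic‑function formulas. By Theorem \ref{exis} the solutions are organized along the smooth curve $k\in(0,k_1)\mapsto\varphi_{c(k)}$ with $c=c(k)$ as in (\ref{ilw.29}), and the numerical study of Section 3 (the plots of $c(k)$ and $c'(k)$) shows $c'(k)>0$ throughout $(0,k_1)$. Setting $N(k):=\int_0^L\varphi_{c(k)}^2(x)\,dx$ and using $\partial_c=\frac{1}{c'(k)}\partial_k$ along the curve, we get
\be
\frac{\partial}{\partial c}\int_0^L\varphi_c^2(x)\,dx=\frac{N'(k)}{c'(k)}.
\ee
Since $c'(k)>0$, the assertion is equivalent to $N'(k)>0$, and it is enough to check this on the subinterval $(k_0,k_1)$ on which $c(k)>0$, cf. (\ref{ilw.34.2}). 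This is also exactly the quantity controlling $\mathcal I$ and $\det\mathcal D$ through (\ref{nI})--(\ref{detD}), which is why only the case $c>0$ is required.

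Next I would bring in a convenient closed form for $N$. From the defining relation (\ref{ilw.51})--(\ref{ilw.52}) together with $R(k)=N(k)/L$ one has $a(k)^2+c(k)a(k)=N(k)/L$, that is
\be
N(k)=L\,a(k)\bigl(a(k)+c(k)\bigr),\qquad a(k)>0,
\ee
the positivity of $a$ being immediate from (\ref{ilw.52}) since $R>0$. Differentiating and writing $\varsigma(k):=2a(k)+c(k)>0$ (the quantity introduced just after (\ref{ilw.52.1})) gives $N'(k)=L\bigl(\varsigma(k)a'(k)+a(k)c'(k)\bigr)$, hence
\be
\frac{\partial}{\partial c}\int_0^L\varphi_c^2(x)\,dx=L\,a(k)+L\,\varsigma(k)\,\frac{a'(k)}{c'(k)}.
\ee
Because $a(k)>0$, $\varsigma(k)>0$ and $c'(k)>0$, this is strictly positive as soon as $a'(k)\ge 0$, i.e. as soon as the function $a=a(k)$ of Figure \ref{f.ilw.8} is non‑decreasing on $(k_0,k_1)$. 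One may instead avoid the auxiliary $a$ and differentiate directly the explicit expression (\ref{ilw.46}) for $N(k)$ — a combination of $K(k)$, $E(k)$, the complete integral of the third kind $\Pi(\alpha^2,k)$ and the Heuman lambda function $\Lambda_0(\psi,k)$, with $m_1,\dots,m_4,\alpha,\psi$ the explicit functions of $k$ appearing in (\ref{ilw.35})--(\ref{ilw.42}); the two routes are equivalent.

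The genuinely hard point is pinning down the sign of $a'(k)$ (equivalently of $N'(k)$) on $(k_0,k_1)$: none of the expressions above admits a transparent analytic sign discussion, since $a$ is the positive root of (\ref{ilw.51}) whose constant term $R=N/L$ is itself the unwieldy elliptic combination (\ref{ilw.46}), and $c(k)$ is given by (\ref{ilw.29}). I would therefore finish exactly as elsewhere in the paper, by symbolic/numerical computation (Maple) for the representative values $L=\pi$, $\delta=1$: plot $a'(k)$ (or $N'(k)$, or $N'(k)/c'(k)$) over $k\in(k_0,k_1)$, verify strict positivity, and, as in Remark \ref{restri}, check that the same behaviour persists for all admissible $L,\delta$ satisfying (\ref{ilw.4}). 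A useful consistency check is the limit $k\to k_0^+$, where $c(k)\to 0^+$, so $a(k)\to\sqrt{R(k_0)}>0$ and $\varsigma(k)\to 2\sqrt{R(k_0)}$, and the formula above reduces to $L\sqrt{R(k_0)}\bigl(1+2a'(k_0)/c'(k_0)\bigr)$, which should stay bounded away from zero.
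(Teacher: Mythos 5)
There is a genuine gap: you have missed the one idea that makes this proposition provable analytically, and instead you defer the decisive sign to a numerical computation. The paper's proof performs the same chain-rule reduction $\partial_c\int_0^L\varphi_c^2\,dx=N'(k)/c'(k)$ and likewise borrows $c'(k)>0$ from the numerics of Section 3, but it then evaluates $N(k)$ via the Fourier expansion (\ref{ilw.53}) and Plancherel's theorem, obtaining
\begin{equation*}
N(k)=\frac{8\pi^2}{L}\sum_{m=1}^{+\infty}\frac{\left[\sinh\left(\frac{2m\pi\delta}{L}\right)\right]^2}{\left[\sinh\left(\frac{m\pi K(k')}{K(k)}\right)\right]^2}.
\end{equation*}
In this representation the only $k$-dependence sits in the ratio $K(k')/K(k)$, and the classical identity
\begin{equation*}
\frac{d}{dk}\left[\frac{K(k')}{K(k)}\right]=\frac{[E(k)-K(k)]K(k')+K(k)E(k')}{k(k^2-1)K(k)^2}<0
\end{equation*}
makes every term of the series strictly increasing in $k$, whence $N'(k)>0$ with no further computation. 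Your route through $N(k)=L\,a(k)(a(k)+c(k))$ is essentially circular, since $a$ is itself defined through $R=N/L$ in (\ref{ilw.51})--(\ref{ilw.52}); reducing the claim to $a'(k)\ge 0$ (which, incidentally, is sufficient but not necessary for your displayed expression to be positive) trades one opaque sign for another. Likewise, differentiating the elliptic-function expression (\ref{ilw.46}) head-on is exactly the dead end the Plancherel identity is designed to avoid. As written, your argument establishes the proposition only conditionally on an unverified numerical claim, whereas the paper gives an actual proof of $N'(k)>0$.

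One smaller remark: your closing consistency check at $k\to k_0^+$ is harmless but does not bear on the proof; and note that the paper's argument in fact yields $N'(k)>0$ on all of $(0,k_1)$, which is what later permits the extension to every $c\neq 0$ in Remark 5.2 and in the orbital stability section.
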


\begin{proof} We start with the relation
\begin{eqnarray}\label{ilw.33}\displaystyle\frac{\partial}{\partial
c}\displaystyle\left[\displaystyle\int_0^L\varphi_c^2(x)dx\right]=\displaystyle\frac{dk}{dc}\cdot\displaystyle\frac{\partial}{\partial
k}\displaystyle\left[\displaystyle\int_0^L\varphi_{c(k)}^2(x)
dx\right]\equiv \displaystyle\frac{dk}{dc}\cdot N'(k).\end{eqnarray}
Thus, since  $c'(k)>0$, for all $k\in (k_0,k_1)$ (see (\ref{ilw.34.2})), we only need to
establish the sign of $N'(k)$. Before that, it makes necessary to handle with the quantity $N(k)$
in (\ref{ilw.N}) for obtaining  a convenient expression for our calculations. Indeed, from (\ref{ilw.53}) and Plancherel Theorem, we obtain
\begin{eqnarray}\label{ilw.33.2}N(k)=\displaystyle\int_0^L\varphi_{c(k)}^2(x) dx=L\displaystyle\sum_{m=-\infty}^{+\infty}|\widehat{\varphi_c}(m)|^2=\displaystyle\frac{8\pi^2}{L}\displaystyle\sum_{m=1}^{+\infty}\displaystyle\frac{\displaystyle\left[\sinh\displaystyle\left(\displaystyle\frac{2m\pi\delta}{L}\right)\right]^2}{\displaystyle\left[\sinh\displaystyle\left(\displaystyle\frac{m\pi
K(k')}{K(k)}\right)\right]^2},\ \
\end{eqnarray} for all $k\in(0,k_1)$.
So, one can take the first derivative with respect to $k\in
(k_0,k_1)$ in (\ref{ilw.33.2}) to deduce
\begin{eqnarray*}N'(k)=-\displaystyle\frac{16\pi^3}{L}\displaystyle\sum_{m=1}^{+\infty}\displaystyle\left\{\displaystyle\frac{m\cdot\displaystyle\left[\sinh\displaystyle\left(\displaystyle\frac{2m\pi\delta}{L}\right)\right]^2\cdot\displaystyle\left[\cosh\displaystyle\left(\displaystyle\frac{m\pi K(k')}{K(k)}\right)\right]\cdot\displaystyle\left[\displaystyle\frac{d}{dk}\displaystyle\left[\displaystyle\frac{K(k')}{K(k)}\right]\right]}{\displaystyle\left[\sinh\displaystyle\left(\displaystyle\frac{m\pi
K(k')}{K(k)}\right)\right]^3}\right\}.
\end{eqnarray*}

Since
\begin{eqnarray*}\displaystyle\frac{d}{dk}\displaystyle\left[\displaystyle\frac{K(k')}{K(k)}\right]=\displaystyle\frac{[E(k)-K(k)]\cdot K(k')+K(k)\cdot E(k')}{k(k^2-1)\cdot K(k)^2}<0,\quad\text{for all}\;\; k\in (0,1),\end{eqnarray*} we obtain immediately that
\begin{eqnarray}\label{ilw.34}N'(k)>0,\quad\text{for all}\;\; k\in (k_0,k_1).
\end{eqnarray}
This finishes the proof.
\end{proof}

\begin{obs}  By using the proof of Proposition \ref{form1} and the numerical calculations made in Section 2 (see (\ref{ilw.34.2})) we see that $N'(k)>0$ for every $k\in (0, k_1)-\{k_0\}$. So,  we have
\begin{equation}
\frac{d}{dc}\|\varphi_c\|^2>0, \quad\text{for every}\;\; c\neq 0
\end{equation}
  \end{obs}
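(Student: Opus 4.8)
The plan is to reduce the inequality to a monotonicity statement along the curve $k\mapsto\varphi_{c(k)}$ of Theorem~\ref{exis}. Writing $N(k)=\int_0^L\varphi_{c(k)}^2(x)\,dx$ and using the chain rule,
\be
\frac{\partial}{\partial c}\int_0^L\varphi_c^2(x)\,dx=\frac{dk}{dc}\,N'(k),
\ee
so, since $c>0$ corresponds by (\ref{ilw.34.2}) to $k\in(k_0,k_1)$, where the analysis following Theorem~\ref{exis} gives $c'(k)>0$ and hence $dk/dc>0$, it suffices to prove $N'(k)>0$ on $(k_0,k_1)$. The closed form (\ref{ilw.46}) for $N(k)$ in terms of Heuman's lambda function is too cumbersome to differentiate directly; instead I would pass to the Fourier side. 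From the cosine series (\ref{ilw.53}) for $\varphi_c$ and Plancherel's identity one gets
\be
N(k)=\frac{8\pi^2}{L}\sum_{m=1}^{+\infty}\frac{\sinh^2\!\big(\tfrac{2m\pi\delta}{L}\big)}{\sinh^2\!\big(\tfrac{m\pi K(k')}{K(k)}\big)},
\ee
in which the entire $k$-dependence is concentrated in the single scalar $\rho(k):=K(k')/K(k)$ appearing in the denominators.

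The second step is termwise differentiation. Each summand equals $s_m^2\,\sinh^{-2}\!\big(m\pi\rho(k)\big)$ with $s_m=\sinh(2m\pi\delta/L)>0$ a $k$-independent constant, so its $k$-derivative is $-2\pi m\,s_m^2\,\rho'(k)\,\cosh\!\big(m\pi\rho(k)\big)\,\sinh^{-3}\!\big(m\pi\rho(k)\big)$; hence $N'(k)$ has the sign of $-\rho'(k)$. It then remains to invoke the classical fact that $\rho$ is strictly decreasing on $(0,1)$: differentiating and using Legendre's relation $E(k)K(k')+E(k')K(k)-K(k)K(k')=\pi/2$ collapses the derivative to $\rho'(k)=\dfrac{\pi/2}{k(k^2-1)K(k)^2}<0$, which is the inequality used in the paper just before (\ref{ilw.34}). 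Therefore every summand is strictly increasing in $k$, so $N'(k)>0$, and combined with $c'(k)>0$ this yields $\frac{\partial}{\partial c}\|\varphi_c\|_{L^2}^2>0$ for $c>0$. The same computation gives $N'(k)>0$ on all of $(0,k_1)$, which upgrades the statement to $\frac{d}{dc}\|\varphi_c\|^2>0$ for every $c\neq0$, matching the Remark.

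Two points need attention. The routine one is justifying the termwise differentiation: on any compact subinterval of $(0,k_1)$ one has $\rho(k)\ge\rho_0>0$ together with $\tfrac{2\delta}{L}<\rho(k)$, which is exactly the analytic constraint (\ref{ilw.4}); the standard exponential bounds on $\sinh$ and $\cosh$ then show that both the series and its formal $k$-derivative are dominated there by a fixed geometrically decaying sequence, so the interchange of sum and derivative is legitimate. The genuine obstacle is that the argument rests on the numerically verified positivity of $c'(k)$ on $(k_0,k_1)$ (indeed on $(0,k_1)$); proving this analytically from the explicit but unwieldy formula (\ref{ilw.29}) for $c(k)$, for general $L$ and $\delta$, is what forces the restriction $L=\pi$, $\delta=1$ together with the appeal to simulations. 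A possible alternative is to differentiate the profile equation (\ref{travkdv}) in $c$, obtaining $\mathcal L\,\partial_c\varphi_c=\varphi_c+\partial_c A_c$ with $\partial_c A_c=\tfrac1L\,\partial_c\|\varphi_c\|^2$, pair this against $\varphi_c$, and attempt to extract the sign from the spectral picture of Theorem~\ref{ilw.spec}; but that route returns to the sign of the same quantity, so the Plancherel computation above is the most direct.
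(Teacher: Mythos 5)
Your proposal is correct and follows essentially the same route as the paper: the chain rule reduction to the sign of $N'(k)$, the Plancherel/Fourier-series expression (\ref{ilw.33.2}), termwise differentiation, and the negativity of $\frac{d}{dk}[K(k')/K(k)]$ (which the paper states with the numerator $[E(k)-K(k)]K(k')+K(k)E(k')$, equal to $\pi/2$ by Legendre's relation exactly as you observe), together with the numerically verified sign of $c'(k)$. Your added remarks on justifying the term-by-term differentiation and on the numerical dependence of $c'(k)>0$ are accurate refinements of the paper's argument rather than departures from it.
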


Next we establish the formulas (\ref{nI}) and (\ref{detD}).

\begin{prop}\label{form2}  For every $c>0$ we obtain $\mathcal{I}>0$. In particular, $n(\mathcal{I})=0$.

\end{prop}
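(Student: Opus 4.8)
The plan is to first establish the closed expression (\ref{nI}) for $\mathcal I=\langle \mathcal L^{-1}1,1\rangle$ and then read off its sign from Proposition \ref{form1}. As a preliminary, since $\varphi_c$ is $L$-periodic we have $\langle 1,\frac{d}{dx}\varphi_c\rangle=0$, so the constant function $1$ lies in $\ker(\mathcal L)^{\perp}=[\frac{d}{dx}\varphi_c]^{\perp}$; hence $\mathcal L^{-1}1$ is well defined modulo $\ker(\mathcal L)$ and, as noted before the statement, $\langle\mathcal L^{-1}1,1\rangle$ does not depend on the chosen representative, so $\mathcal I$ is unambiguous.

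To derive (\ref{nI}) I would exhibit an explicit preimage of $1$ under $\mathcal L$. Two identities are at hand. First, differentiating the profile equation (\ref{travkdv}) with respect to $c$ and recalling $\mathcal L=\mathcal M_{\delta}+c-2\varphi_c$ gives $\mathcal L\big(\frac{\partial}{\partial c}\varphi_c\big)=-\varphi_c-\frac{dA_c}{dc}$, where $\frac{dA_c}{dc}=\frac1L\frac{\partial}{\partial c}\int_0^L\varphi_c^2(x)\,dx$. Second, applying $\mathcal L$ directly to the constant function and using that $\mathcal M_{\delta}$ sends constants to constants yields $\mathcal L(1)=(\text{const})-2\varphi_c$. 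Eliminating $\varphi_c$ between these two relations produces an identity of the form $\mathcal L\big(\frac{\partial}{\partial c}\varphi_c-\frac12\cdot 1\big)=(\text{const})\cdot 1$, hence an explicit formula for $\mathcal L^{-1}1$ as a scalar multiple of $1-2\frac{\partial}{\partial c}\varphi_c$ modulo $\ker(\mathcal L)$. Pairing this with $1$ and using $\langle\varphi_c,1\rangle=0$ together with $\langle\frac{\partial}{\partial c}\varphi_c,1\rangle=\frac{\partial}{\partial c}\langle\varphi_c,1\rangle=0$ collapses the right-hand side and gives exactly (\ref{nI}). (An equivalent route is to release the integration constant, view the profiles as a two-parameter family $\Phi_{c,A}$ solving $\mathcal M_{\delta}\Phi+c\Phi-\Phi^2=-A$, differentiate in $A$ to get $\mathcal L\,\partial_A\Phi=-1$, and compute $\mathcal I=-\frac{\partial}{\partial A}\int_0^L\Phi_{c,A}$; I would keep the first route since it already produces $\frac{\partial}{\partial c}\int_0^L\varphi_c^2$.)

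With (\ref{nI}) in hand the conclusion is immediate: for $c>0$ the numerator $L^2$ is positive, and Proposition \ref{form1} gives $\frac{\partial}{\partial c}\int_0^L\varphi_c^2(x)\,dx>0$, so the denominator $cL+2\frac{\partial}{\partial c}\int_0^L\varphi_c^2(x)\,dx$ is a sum of two strictly positive terms. Hence $\mathcal I>0$, and by the convention $n(\mathcal I)=0\Leftrightarrow\mathcal I>0$ we obtain $n(\mathcal I)=0$.

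The only delicate point is the derivation of (\ref{nI}). Because the wave has mean zero, the integration constant $A_c$ in (\ref{travkdv}) is not a free parameter but is slaved to the normalization $A_c=\frac1L\int_0^L\varphi_c^2$, so one cannot simply differentiate in $A_c$ inside the given one-parameter family; the care required is in inverting the non-local operator $\mathcal L$ on the constant function and in checking that the scalar identities produced do not depend on the representative in $\mathcal L^{-1}1$. Once this bookkeeping is handled, positivity of $\mathcal I$ is a one-line consequence of Proposition \ref{form1}.
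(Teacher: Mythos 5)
Your proposal is correct and follows essentially the same route as the paper: both rest on the two identities $\mathcal{L}(1)=c-2\varphi_{c}$ and $\mathcal{L}\bigl(\tfrac{\partial}{\partial c}\varphi_{c}\bigr)=-\varphi_{c}-\tfrac{1}{L}\tfrac{d}{dc}\|\varphi_{c}\|^{2}$, combined with the mean-zero facts $\langle\varphi_{c},1\rangle=\langle\tfrac{\partial}{\partial c}\varphi_{c},1\rangle=0$ to arrive at (\ref{nI}), and then invoke Proposition \ref{form1}. The only (harmless) difference is that you eliminate $\varphi_{c}$ at the operator level to invert $\mathcal{L}$ directly on the constant function, whereas the paper pairs each identity with $1$ and solves the resulting pair of scalar equations for $\langle\mathcal{L}^{-1}1,1\rangle$.
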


\begin{proof} Since $f\equiv 1\in
H_{per}^s([0,L])$, for all $s\geq0$, and $\mathcal{M}_{\delta}(1)=0$ we get
\begin{eqnarray}\label{ilw.15}
\mathcal{L}(1)=\mathcal{M}_{\delta}(1)+c-2\varphi_{c}=c-2\varphi_{c}.
\end{eqnarray}
Then, since $\ker(\mathcal{L})=[\frac{d}{dx}\varphi_c]$, $\frac{d}{dx}\varphi_c\perp
1$ and $\frac{d}{dx}\varphi_c\perp \varphi_c$, one has from
(\ref{ilw.15}) that
\begin{eqnarray}\label{ilw.15.1}1=c\mathcal{L}^{-1}1-2\mathcal{L}^{-1}\varphi_{c}.
\end{eqnarray}
Thus
$$
c\langle\mathcal{L}^{-1}1,1\rangle=\langle
1,1\rangle+2\langle\mathcal{L}^{-1}\varphi_{c},1\rangle.
$$
Now, since $c>0$, we get
\begin{eqnarray}\label{ilw.16}\langle\mathcal{L}^{-1}1,1\rangle=\displaystyle\frac{L}{c}
+\displaystyle\frac{2\langle\mathcal{L}^{-1}\varphi_{c},1\rangle}{c}.
\end{eqnarray}

Next, by differentiating  identity (\ref{travkdv}) with regard to $c$ we obtain
\begin{eqnarray}\label{ilw.17}
\mathcal{L}\displaystyle\left(\displaystyle\frac{\partial}{\partial c}\varphi_{c}\right)=-\varphi_{c}-\displaystyle\frac{1}{L}\frac{d}{dc}\|\varphi_{c}\|^2.
\end{eqnarray}
Then,  by applying the  operator $\mathcal{L}^{-1}$ at the equality (\ref{ilw.17}) we deduce
\begin{equation}\label{ilw.19}
\displaystyle\frac{\partial}{\partial c}\varphi_{c}=-\mathcal{L}^{-1}\varphi_{c}-\displaystyle\frac{1}{L}\displaystyle\frac{d}{dc}\|\varphi_{c}\|^2 \mathcal{L}^{-1}1.
\end{equation}
Hence, since $\varphi_{c}$ has the mean zero property we have
\begin{eqnarray}\label{ilw.20}\displaystyle\left\langle\displaystyle\frac{\partial}{\partial c}\varphi_{c},1\right\rangle=\displaystyle\frac{\partial}{\partial c}\displaystyle\int_0^L\varphi_{c}(x)\ dx=0,
\end{eqnarray}
and so, by combining (\ref{ilw.19}) and (\ref{ilw.20}) it follows that
\begin{eqnarray}\label{ilw.21}\langle\mathcal{L}^{-1}\varphi_{c},1\rangle+\displaystyle\frac{1}{L}\displaystyle\frac{d}{dc}\|\varphi_{c}\|^2\langle\mathcal{L}^{-1}1,1\rangle=0.
\end{eqnarray}
Therefore, from (\ref{ilw.16}) and (\ref{ilw.21}) we arrive to the equality
\begin{eqnarray*}\langle\mathcal{L}^{-1}1,1\rangle+\displaystyle\frac{2}{Lc}\displaystyle\frac{d}{dc}\|\varphi_{c}\|^2 \langle\mathcal{L}^{-1}1,1\rangle=\displaystyle\frac{L}{c}.
\end{eqnarray*}
Lastly, since $\frac{d}{dc}\|\varphi_{c}\|^2>0$ (Proposition \ref{form1}), we get
\begin{eqnarray}\label{ilw.22}
\mathcal{I}=\langle\mathcal{L}^{-1}1,1\rangle=
\displaystyle\frac{L^2}{cL+2\displaystyle\frac{d}{dc}\|\varphi_{c}\|^2}.
\end{eqnarray}
Thus, we obtain the formula in (\ref{nI}) and from the hypotheses on $c$ and  Proposition \ref{form1} we have immediately that $\mathcal{I}>0$. This finishes the proof.
\end{proof}

\begin{obs} \label{numeric}  From (\ref{ilw.22}) we note that the requirement for $c$ to be positive in Proposition \ref{form2} has only technical reasons. If we do not require $c> 0$, the study of $\mathcal{I}$ will  depends on a ``heavy" numerical calculations. Here, additional calculations in \textit{Maple 16} enable us to say that $\mathcal{I}> 0$, for all $c \neq 0$  $(k\in (0, k_1)-\{k_0\})$.
  \end{obs}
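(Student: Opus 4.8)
The plan is to observe that the closed formula (\ref{ilw.22}) for $\mathcal{I}$ is in fact valid for every $c\neq 0$, and then to reduce the claim $\mathcal{I}>0$ to a sign question for its denominator. First I would revisit the derivation carried out in the proof of Proposition \ref{form2}: the manipulations (\ref{ilw.15})--(\ref{ilw.21}) are purely algebraic and use only that $\mathcal{L}^{-1}$ is well defined on $\ker(\mathcal L)^{\perp}$ (which holds by Theorem \ref{ilw.spec}, since $1,\varphi_{c}\perp\frac{d}{dx}\varphi_{c}$) together with a single division by $c$ in passing to (\ref{ilw.16}). None of these steps uses the \emph{sign} of $c$. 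Hence for all $c\neq 0$, equivalently $k\in(0,k_1)\setminus\{k_0\}$, one still obtains
\be
\mathcal{I}=\frac{L^2}{cL+2\frac{d}{dc}\|\varphi_{c}\|^2},
\ee
so that $\mathcal{I}>0$ is equivalent to the strict positivity of the denominator $\Delta(c):=cL+2\frac{d}{dc}\|\varphi_{c}\|^2$.

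Next I would split according to the sign of $c$. For $c>0$, that is $k\in(k_0,k_1)$, both summands of $\Delta$ are positive by Proposition \ref{form1}, which is exactly the content of Proposition \ref{form2}. The genuinely new case is $c<0$, that is $k\in(0,k_0)$, where the negative term $cL$ competes with the positive quantity $2\frac{d}{dc}\|\varphi_{c}\|^2$. To render $\Delta$ computable I would pass to the elliptic modulus through the chain rule, writing $\frac{d}{dc}\|\varphi_{c}\|^2=N'(k)/c'(k)$, with $N(k)$ given by the rapidly convergent series (\ref{ilw.33.2}) and $c(k)$ by the closed expression (\ref{ilw.29}). The Remark following Proposition \ref{form1} provides $N'(k)>0$ on $(0,k_1)\setminus\{k_0\}$, and the simulations of Section 3 provide $c'(k)>0$ on $(0,k_1)$; thus $\frac{d}{dc}\|\varphi_{c}\|^2>0$ throughout, and $\Delta$ is expressed entirely through $K(k)$, $E(k)$, the nome $q(k)=e^{-\pi K(k')/K(k)}$, and their $k$-derivatives.

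The verification itself would then reduce to evaluating $\Delta(k)=c(k)L+2N'(k)/c'(k)$ on $(0,k_0)$ and confirming $\Delta(k)>0$, which is precisely the heavy symbolic-numerical computation carried out in \emph{Maple 16}: one truncates the series (\ref{ilw.33.2}) for $N$, differentiates it termwise for $N'$, and uses (\ref{ilw.29}) together with its derivative for $c$ and $c'$. I expect the principal obstacle to be that, for $c<0$, no elementary sign argument forces $\Delta>0$, since both competing terms involve complete elliptic integrals and the nome with no apparent monotonicity or comparison available. A second delicate point is the behaviour as $k\to k_0^{-}$, where $c\to 0^{-}$ and $N'(k)\to 0$ simultaneously, so that $\Delta(k)\to 0$; there one must examine the leading order of $\Delta$ near $k_0$ to guarantee strict positivity on the open interval. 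This is exactly why the modulus $k_0$ (where $\mathcal{I}$ would otherwise be singular) is excluded, and why the statement is recorded as a numerically supported remark rather than a theorem.
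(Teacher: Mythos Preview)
Your plan matches what the paper records: the Remark itself offers no argument beyond stating that Maple~16 confirms $\mathcal{I}>0$ for $c\neq 0$, and your elaboration --- that the derivation of (\ref{ilw.22}) uses only $c\neq 0$ (not $c>0$), so that the sign of $\mathcal{I}$ reduces to that of $\Delta(k)=c(k)L+2N'(k)/c'(k)$, which is then checked numerically on $(0,k_0)$ --- is exactly the intended computation.

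One correction is in order, however. Your assertion that $N'(k)\to 0$ and hence $\Delta(k)\to 0$ as $k\to k_0^{-}$ is not accurate. The expression for $N'(k)$ displayed in the proof of Proposition~\ref{form1} is strictly positive for \emph{every} $k\in(0,k_1)$, with no degeneration at $k_0$: the factor $\frac{d}{dk}\big[K(k')/K(k)\big]$ is strictly negative on all of $(0,1)$, and the remaining factors in each summand are positive. Consequently $\Delta(k)\to 2N'(k_0)/c'(k_0)>0$ as $k\to k_0$, so there is no delicate leading-order analysis to perform near $k_0$. The exclusion of $k_0$ stems solely from the division by $c$ in passing to (\ref{ilw.16}), not from any singularity of $\Delta$ or of $\mathcal{I}$ itself.
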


\begin{prop}\label{form3}  For $c>0$ we obtain $\det(\mathcal D)<0$. In particular, $n(\mathcal D)=1$.
\end{prop}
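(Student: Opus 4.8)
The plan is to compute the determinant of $\mathcal{D}$ explicitly using the identities already established in the proof of Proposition \ref{form2}. From the definition \eqref{Dmatrix}, one has
$$
\det(\mathcal{D})=\frac{1}{\langle\mathcal{L}^{-1}1,1\rangle^2}\Big[\langle\mathcal{L}^{-1}\varphi_c,\varphi_c\rangle\langle\mathcal{L}^{-1}1,1\rangle-\langle\mathcal{L}^{-1}\varphi_c,1\rangle^2\Big],
$$
so the task reduces to expressing the three inner products $\langle\mathcal{L}^{-1}\varphi_c,\varphi_c\rangle$, $\langle\mathcal{L}^{-1}\varphi_c,1\rangle$ and $\langle\mathcal{L}^{-1}1,1\rangle=\mathcal{I}$ in terms of $c$, $L$ and $\frac{d}{dc}\|\varphi_c\|^2$.

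First I would recall from \eqref{ilw.21} that $\langle\mathcal{L}^{-1}\varphi_c,1\rangle=-\frac{1}{L}\frac{d}{dc}\|\varphi_c\|^2\,\mathcal{I}$, which already handles the off-diagonal term. Next, to get $\langle\mathcal{L}^{-1}\varphi_c,\varphi_c\rangle$, I would pair \eqref{ilw.19} with $\varphi_c$: since $\varphi_c$ has mean zero, the term involving $\langle\mathcal{L}^{-1}1,\varphi_c\rangle$ survives and gives
$$
\langle\tfrac{\partial}{\partial c}\varphi_c,\varphi_c\rangle=-\langle\mathcal{L}^{-1}\varphi_c,\varphi_c\rangle-\tfrac{1}{L}\tfrac{d}{dc}\|\varphi_c\|^2\langle\mathcal{L}^{-1}1,\varphi_c\rangle.
$$
The left side is $\frac{1}{2}\frac{d}{dc}\|\varphi_c\|^2$, and substituting the expression for $\langle\mathcal{L}^{-1}\varphi_c,1\rangle$ just found yields $\langle\mathcal{L}^{-1}\varphi_c,\varphi_c\rangle$ as a combination of $\frac{d}{dc}\|\varphi_c\|^2$, $\mathcal{I}$ and $L$. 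Plugging all three into the determinant formula, the terms should collapse; using $\mathcal{I}=\frac{L^2}{cL+2\frac{d}{dc}\|\varphi_c\|^2}$ from \eqref{ilw.22} to simplify, I expect to land exactly on $\det(\mathcal{D})=-\frac{1}{2}\frac{1}{\mathcal{I}}\frac{d}{dc}\|\varphi_c\|^2$, which is formula \eqref{detD}.

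Once \eqref{detD} is established, the sign follows immediately: by Proposition \ref{form1} we have $\frac{d}{dc}\|\varphi_c\|^2>0$, and by Proposition \ref{form2} we have $\mathcal{I}>0$, so $\det(\mathcal{D})<0$; in particular $\mathcal{D}$, being a $2\times 2$ symmetric matrix with negative determinant, has exactly one negative eigenvalue, i.e. $n(\mathcal{D})=1$. The main obstacle is purely bookkeeping: carefully tracking which mean-zero orthogonality relations kill which terms when pairing \eqref{ilw.19} against $1$ versus against $\varphi_c$, and then making the algebraic cancellation in the determinant go through cleanly rather than leaving a messy residual. There is no genuine analytic difficulty here beyond the results already in hand; everything rests on the linear-algebraic identities derived from differentiating \eqref{travkdv} in $c$.
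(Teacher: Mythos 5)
Your proposal is correct and follows essentially the same route as the paper: express the three inner products in $\mathcal{D}$ through the identities obtained in the proof of Proposition \ref{form2}, collapse the determinant to $-\tfrac{1}{2\mathcal{I}}\tfrac{d}{dc}\|\varphi_c\|^2$, and conclude from Propositions \ref{form1} and \ref{form2}. The only (harmless) variation is that you extract $\langle\mathcal{L}^{-1}\varphi_c,\varphi_c\rangle$ by pairing the $c$-derivative identity \eqref{ilw.19} with $\varphi_c$, whereas the paper pairs \eqref{ilw.15.1} with $\varphi_c$ to get $\langle\mathcal{L}^{-1}\varphi_c,\varphi_c\rangle=\tfrac{c}{2}\langle\mathcal{L}^{-1}1,\varphi_c\rangle$; both yield the same value and the same final formula \eqref{detD}.
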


\begin{proof}
We start by obtaining expressions for the elements of the matrix $\mathcal D$ in (\ref{Dmatrix}). Indeed,  from (\ref{ilw.16})  and (\ref{ilw.22}),
\begin{eqnarray}\label{ilw.23}\langle\mathcal{L}^{-1}[\varphi_{c}],1\rangle=\displaystyle\frac{c\langle\mathcal{L}^{-1}[1],1\rangle}{2}-\displaystyle\frac{L}{2}
=-\displaystyle\frac{L\displaystyle\frac{d}{dc}\|\varphi_{c}\|^2}{cL+2\displaystyle\frac{d}{dc}\|\varphi_{c}\|^2}.
\end{eqnarray}
Hence, by using identities (\ref{ilw.15.1}) and (\ref{ilw.23}) and the fact that $\varphi_{c}\in H_0$ we obtain
\begin{eqnarray}\label{ilw.24}\langle\mathcal{L}^{-1}\varphi_{c},\varphi_{c}\rangle=\displaystyle\frac{c}{2}\langle\mathcal{L}^{-1}1,\varphi_{c}\rangle=
-\displaystyle\frac{cL\displaystyle\frac{d}{dc}\|\varphi_{c}\|^2}{2cL+4\displaystyle\frac{d}{dc}\|\varphi_{c}\|^2}.
\end{eqnarray}

Then, since $\mathcal{I}\neq 0$ (Proposition \ref{form2}) follows from
(\ref{ilw.22}), (\ref{ilw.23}) and (\ref{ilw.24}) that
\begin{equation}\begin{array}{llllll}
\det(\mathcal{D})&=&\displaystyle\frac{1}{\mathcal{I}}\left[\langle\mathcal{L}^{-1}\varphi_{c},\varphi_{c}\rangle
-\displaystyle\frac{\langle\mathcal{L}^{-1}\varphi_{c},1\rangle^2}{\langle\mathcal{L}^{-1}1,1\rangle}\right]\\\\
&=&\displaystyle-\frac{1}{\mathcal{I}}\left[\displaystyle\frac{cL
\displaystyle\frac{d}{d
c}\|\varphi_{c}\|^2}{2cL+4 \displaystyle\frac{d}{d
c}\|\varphi_{c}\|^2}-\displaystyle\frac{\displaystyle\left[
\displaystyle\frac{d}{d
c}\|\varphi_{c}\|^2\right]^2}{cL+2\displaystyle\frac{d}{d
c}\|\varphi_{c}\|^2}\right]=-\displaystyle\frac{1}{2}\frac{1}{\mathcal{I}}
\displaystyle\frac{d}{d
c}\|\varphi_{c}\|^2.
\end{array}
\label{detD1}
\end{equation}
Therefore, we obtain the formula in (\ref{detD}) and from Propositions
\ref{form1}-\ref{form2} we have $\det(\mathcal D)<0$. This finishes
the proof of the Proposition.
\end{proof}

\begin{obs}
From $(\ref{vlinear2})$ and the fact that $\mathcal{L}=\mathcal{L}_{\varsigma,\delta}$, we deduce that the positive and periodic wave $\phi_{\varsigma}$ is also linearly stable.
\end{obs}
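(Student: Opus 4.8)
The plan is to exploit the operator identity already recorded in (\ref{ilw.58}), namely $\mathcal{L}_{\varsigma,\delta}=\mathcal{L}$, and thereby reduce the spectral-stability question for $\phi_{\varsigma}$ to the one already settled for $\varphi_c$ in Theorem \ref{t.ilw.4.1}. First I would verify that $\phi_{\varsigma}=a+\varphi_c$ is itself a traveling-wave solution of the ILW equation (\ref{ILW}), now moving with the shifted speed $\varsigma=c+2a$. This is exactly the content of the Galilean invariance $v(x,t)=u(x+2\gamma t,t)-\gamma$ used in Sections 1 and 4: taking $u(x,t)=\varphi_c(x-ct)$ and $\gamma=-a$ produces $v(x,t)=\varphi_c(x-(c+2a)t)+a=\phi_{\varsigma}(x-\varsigma t)$, which again solves (\ref{ILW}) because $\mathcal{T}_{\delta}$ (and hence $\mathcal{M}_{\delta}$) annihilates constants. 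Equivalently, the profile identity established in the proof of Theorem \ref{ilw.spec}, $\mathcal{M}_{\delta}\phi_{\varsigma}+\varsigma\phi_{\varsigma}-\phi_{\varsigma}^2=0$, certifies that $\phi_{\varsigma}$ is a bona fide periodic profile traveling with speed $\varsigma$.

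Next I would repeat the linearization procedure (\ref{vequ})--(\ref{vlinear2}) verbatim, but now about $\phi_{\varsigma}$ at speed $\varsigma$ rather than about $\varphi_c$ at speed $c$. Writing the perturbation as a mean-zero function (the natural class, since the ILW flow $u_t=\partial_x(\cdots)$ conserves the spatial mean), one obtains the spectral problem $\partial_x\mathcal{L}_{\varsigma,\delta}\big|_{H_0}\psi=\lambda\psi$, with the associated self-adjoint operator $\mathcal{L}_{\varsigma,\delta}=\mathcal{M}_{\delta}+\varsigma-2\phi_{\varsigma}$ as in (\ref{linop}). The crucial point is then the already-proved identity $\mathcal{L}_{\varsigma,\delta}=\mathcal{L}$ of (\ref{ilw.58}): it shows that the two spectral operators $\partial_x\mathcal{L}_{\varsigma,\delta}\big|_{H_0}$ and $\partial_x\mathcal{L}\big|_{H_0}$ are literally one and the same, hence have identical spectra. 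Since Theorem \ref{t.ilw.4.1} gives $\sigma(\partial_x\mathcal{L}\big|_{H_0})\subset i\mathbb{R}$, the same inclusion holds for $\phi_{\varsigma}$, and by Definition \ref{defspe} the positive wave $\phi_{\varsigma}$ is linearly stable.

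The only point requiring care---and it is mild---is the consistency of the admissible perturbation space $H_0$ for the two waves. I would argue that linearizing about either $\varphi_c$ or $\phi_{\varsigma}$ produces the same Hamiltonian spectral problem $J\mathcal{L}\psi=\lambda\psi$ with $J=\partial_x$, and that the restriction to $H_0$ in (\ref{modspecp1}) is dictated solely by $J$, whose range is the mean-zero subspace, and not by the mean of the underlying profile. The constant shift $a$ separating the two profiles is invisible both to $\mathcal{M}_{\delta}$ and to $\partial_x$, so neither the operator nor the relevant subspace changes. Consequently no new Krein-index or elliptic-integral computation is needed: the linear stability of $\phi_{\varsigma}$ follows immediately from the operator identification together with Theorem \ref{t.ilw.4.1}.
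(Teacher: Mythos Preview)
Your proposal is correct and follows exactly the reasoning the paper intends: the remark in question is stated without proof, relying solely on the operator identity (\ref{ilw.58}) and the spectral problem (\ref{vlinear2}), and your argument simply unpacks this by noting that the linearized spectral problems about $\varphi_c$ and $\phi_{\varsigma}$ coincide, so Theorem \ref{t.ilw.4.1} applies verbatim.
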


\section{Orbital Stability for the ILW-equation}

In the last section we have proved that the Krein-Hamiltonian index $\mathcal{K}_{{\rm
Ham}}$ associated to the linear operator $\partial_x\mathcal L$ is zero, and thus the linear stability of  the periodic traveling wave $\varphi_c$ was obtained. The next outcome of the theory is to obtain informations about the orbital stability of these periodic profiles. From the theories established in \cite{grillakis1}, \cite{grillakis2},  \cite{DK} and \cite[Chapter 5.2.2]{KP}, we can deduce that $\varphi_c$ will be a local minimizer of a constrained energy,  and so the orbital stability of these periodic waves is expected to be obtained provided we present a convenient global well-posedness result for the model (\ref{ILW}).

Now, the study of orbital stability can be based on  an analysis of Lyapunov type (see \cite{A},  \cite{Be}-\cite{bona1}-\cite{grillakis2}-\cite{grillakis1}-\cite{johnson09}-\cite{Wo}) and  it will work very well when the integration constant $A_c$ in (\ref{travkdv}) is constant  or zero. In the case of the integration constant $A_c$ to be a function of the wave velocity $c$, as in our case, it does not seem to be  immediate to apply this strategy. Thus, our following purpose will be to apply the recent development in Andrade and Pastor \cite{andrade-pastor} to handle such situations and so to obtain the orbital stability of the profile $\varphi_c$ for every $c\neq 0$ (see Theorem \ref{t.ilw.4.1} and Remark \ref{numeric})

We start our study by presenting the formal definition of orbital stability.
\begin{defi}\label{defi} We
say that the periodic wave $\varphi_c$ in (\ref{solilw}) is {\em orbitally stable} with respect to $(\ref{ILW})$ in the space $\mathcal W$ in (\ref{W}), if for all $\varepsilon >0$,  there exists $\delta >0$ such that if $u_0\in H^s_{per}([0,L])\cap \mathcal{W}$, $s>3/2$, with
$||u_0 - \varphi_c||_{\mathcal{W}} < \delta$ and $u(t)$ is the solution of
$(\ref{ILW})$ with $u(0) = u_0$, then for all $t\in\mathbb{R}$ one has
\[
\inf_{s\in\mathbb{R}}||u(t) -
\varphi_c(\cdot+s)||_{\mathcal{W}} < \varepsilon.
\]
Otherwise, the periodic wave $\varphi_c$ is said to be orbitally unstable.\label{defi1}\end{defi}

From Definition \ref{defi} we have that some information about the global well-posedness problem for the ILW-equation need to be established. That is the focus of the following theorem.

\begin{teo}\label{cauchy} Consider $u_0\in H^s_{per}([0,L])$. If
$s>\displaystyle\frac{3}{2}$, then there is a unique $u\in
C(\mathbb{R};H^s_{per}([0,L]))$, such that $u$ solves the initial value problem
\begin{eqnarray}\label{ilw.3.0.1}\displaystyle\left\{\begin{array}{l}
                                       u_t+2uu_x-(\mathcal{M}_{\delta}u)_x=0,\ \ (x,t)\in \mathbb{R}\times \mathbb{R}. \\
                                       u(0)=u_0.
                                     \end{array}
\right.
\end{eqnarray}
In addition, for all $T>0$ the mapping data-solution
$$
u_0\in H^s_{per}([0,L]) \to u\in C([0,T];H^{s}_{per}([0,L])),
$$
it is continuous.
\label{teoWP}\end{teo}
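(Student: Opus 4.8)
{The plan is to establish local well-posedness in $H^s_{per}([0,L])$ for $s>3/2$ and then upgrade it to a global result by means of the conserved quantities of the ILW equation.}

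\textbf{Local theory.} The ILW equation (\ref{ilw.3.0.1}) is a quasilinear, nonlocal perturbation of Burgers' equation: the operator $\mathcal M_\delta=\mathcal T_\delta\partial_x-\frac1\delta$ has symbol $\theta_\delta(n)=|n|\coth\!\big(\tfrac{2\pi n\delta}{L}\big)-\tfrac1\delta$, which grows linearly in $|n|$, exactly like the periodic Benjamin--Ono symbol. Thus $\mathcal M_\delta$ is a skew-adjoint, order-one operator and the dispersion is too weak to gain a derivative by smoothing estimates. The natural route, therefore, is a parabolic-regularization / energy-method argument in the spirit of Kato and of Iorio's treatment of BO. First I would consider the regularized problem $u^\mu_t+2u^\mu u^\mu_x-(\mathcal M_\delta u^\mu)_x=-\mu\,u^\mu_{xx}$, $\mu>0$, solve it on a $\mu$-dependent time interval by a fixed-point argument in $C([0,T_\mu];H^s_{per})$, and then derive a priori $H^s$-bounds uniform in $\mu$. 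Applying $\Lambda^s=(1-\partial_x^2)^{s/2}$, pairing with $\Lambda^s u^\mu$ in $L^2_{per}$, the skew-adjoint dispersive term $(\mathcal M_\delta u)_x$ contributes nothing and the dissipative term has a favourable sign; the only dangerous term is the commutator coming from $2uu_x$, which is controlled by the Kato--Ponce commutator estimate
\[
\big\|\Lambda^s(uu_x)-u\,\Lambda^s u_x\big\|_{L^2}\lesssim \|u_x\|_{L^\infty}\|\Lambda^s u\|_{L^2},
\]
together with $\|u_x\|_{L^\infty}\lesssim\|u\|_{H^s}$ for $s>3/2$ (Sobolev embedding). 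This yields $\frac{d}{dt}\|u^\mu\|_{H^s}^2\lesssim\|u^\mu\|_{H^s}^3$, hence a uniform existence time $T=T(\|u_0\|_{H^s})$ and a uniform bound. Then one passes to the limit $\mu\to0$ (compactness in a lower norm plus interpolation, or a Bona--Smith-type argument using a mollified initial datum) to obtain a solution $u\in C([0,T];H^s_{per})$. Uniqueness and continuous dependence follow from an energy estimate for the difference of two solutions, carried out at the $L^2$ level (so as to avoid a derivative loss) combined with a Bona--Smith approximation to recover continuity of the flow in the $H^s$-norm rather than merely in a weaker topology.

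\textbf{Globalization.} The ILW equation possesses the conserved mass $\int_0^L u\,dx$, the conserved $L^2$-norm $\int_0^L u^2\,dx$, and the conserved Hamiltonian $E(u)=\int_0^L\big(u\,\mathcal M_\delta u\,? \big)$, more precisely $E(u)=\frac12\int_0^L u\,\mathcal T_\delta u_x\,dx-\frac1{2\delta}\int_0^L u^2\,dx+\frac23\int_0^L u^3\,dx$, whose quadratic part controls the $\dot H^{1/2}$-type norm associated with the symbol $\theta_\delta$. Combining $\|u(t)\|_{L^2}=\|u_0\|_{L^2}$ with conservation of $E$ and the Gagliardo--Nirenberg inequality (to absorb the cubic term) gives an a priori bound on $\|u(t)\|_{\mathcal W}$ for all $t$. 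To push the bound up to $H^s$ for $s>3/2$ one iterates the local estimate $\frac{d}{dt}\|u\|_{H^s}^2\lesssim\|u_x\|_{L^\infty}\|u\|_{H^s}^2$: the difficulty is that $\|u_x\|_{L^\infty}$ is not directly controlled by the conserved quantities. Here I would invoke the standard low-regularity global theory for ILW/BO-type equations (global well-posedness is known in $L^2$ and below for BO and analogously for ILW), so that the $\mathcal W\hookrightarrow H^{1/2}$ a priori control, upgraded via persistence of regularity, prevents blow-up of the $H^s$ norm; alternatively one cites the global $L^2$ theory directly and propagates regularity. The continuity of the data-to-solution map on $[0,T]$ then follows from the local continuous dependence together with the time-translation/iteration argument.

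\textbf{Main obstacle.} The crux is the quasilinear nature of the problem: because $\mathcal M_\delta$ is only order one, there is a genuine derivative loss in naive iteration, so one cannot run a contraction in $H^s$ directly and must use parabolic regularization plus energy estimates with Kato--Ponce commutators — and, for continuous dependence, a Bona--Smith argument. The globalization step is comparatively soft once the conservation laws are in hand and one may lean on the existing low-regularity global theory for the ILW equation; I expect the authors simply to cite such results (as they do for the stability machinery) rather than reprove them, so the theorem's proof will mostly consist of setting up the regularized problem, stating the uniform energy estimate, and invoking the conserved quantities.}
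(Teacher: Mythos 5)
Your proposal is a reasonable reconstruction of the standard argument, but the paper itself does not prove this theorem at all: its entire ``proof'' is the citation ``See Abdelouhab \emph{et al.} in \cite{abdel}'', exactly as you anticipated in your closing remarks. The scheme you outline (parabolic regularization, uniform $H^s$ energy estimates via Kato--Ponce commutators and the embedding $H^{s}\hookrightarrow W^{1,\infty}$ for $s>3/2$, passage to the limit, and a Bona--Smith argument for continuous dependence) is indeed the method of that reference, so as a blind sketch of where the result comes from it is on target.

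The one genuinely soft spot is your globalization step. The three invariants $E_{-1}$, $E_0$, $E_1$ listed in the paper only control a norm of $H^{1/2}$ strength (the quadratic part of $E_1$ has an order-one symbol), and this does not control $\|u_x\|_{L^\infty}$, so the Gronwall inequality $\frac{d}{dt}\|u\|_{H^s}^2\lesssim\|u_x\|_{L^\infty}\|u\|_{H^s}^2$ cannot be closed from these alone; your appeal to ``existing low-regularity global theory plus persistence of regularity'' is a placeholder rather than an argument. What actually closes the loop in \cite{abdel} is the complete integrability of the ILW equation: it possesses an infinite hierarchy of polynomial conservation laws whose $n$-th member controls the $H^{n/2}$ (in particular the $H^{2}$) norm, and once $\|u(t)\|_{H^2}$ is bounded a priori, $\|u_x(t)\|_{L^\infty}$ is bounded and the $H^s$ norm persists globally for all $s>3/2$ by the differential inequality you wrote. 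If you were to write this proof out rather than cite it, you would need to invoke (or derive) at least the next invariant in the hierarchy beyond $E_1$; with that amendment your outline matches the cited proof.
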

\begin{proof} See Abdelouhab {\it et al.} in \cite{abdel}.
\end{proof}

The ILW equation has the following three basic conserved quantities,
\begin{equation}\label{conser1}E_{-1}(u)=\displaystyle\int_0^L u\ dx,\ \ \ E_0(u)=\displaystyle\frac{1}{2}\displaystyle\int_0^Lu^2\ dx\end{equation} and
\begin{equation}\label{conser2}E_1(u)=\displaystyle\frac{1}{2}\displaystyle\int_0^L(\mathcal{M}_{\delta}u)u\ dx-\displaystyle\frac{1}{3}\displaystyle\int_0^Lu^3\ dx.
\end{equation}
Indeed, from Theorem $\ref{teoWP}$ and density arguments we deduce that for all $t$,
$$
E_{-1}(u(t))=E_{-1}(u_0),\;\; E_{0}(u(t))=E_{0}(u_0),\;\; and\;\; E_{1}(u(t))=E_{1}(u_0).
$$
Moreover, the ILW equation admits the following  Hamiltonian structure
$$
u_t=-2uu_x+(\mathcal{M}_{\delta}u)_x=\partial_x(-u^2+\mathcal{M}_{\delta}u)=\partial_xE_1'(u).
$$

Our purpose in the following is to describe Andrade and Pastor's approach \cite{andrade-pastor} in the case of the ILW equation. We note from Theorem \ref{exis} that the wave-velocity, $c$, of our periodic waves  in $(\ref{solilw})$ may also depend smoothly on the elliptic modulus $k$, ($k\to c(k)$, by equation (\ref{ilw.26})). Our stability analysis will be based on this parameter instead of the wave velocity parameter $c$, such as is standard in the classical literature. Therefore, we need to establish a stability framework based on this new ``wave-velocity'' parameter $k$. Thus, by following \cite{andrade-pastor} and \cite{grillakis1},  we consider for every $k\in (0, k_1)$ the following manifold in the space $\mathcal{W}$,
\begin{equation}\label{var1}
\Sigma_k=\left\{u\in\mathcal{W};\ M_k(u)=M_k(\varphi_k),\ \mbox{where}\ M_k(u):=\frac{dc}{dk}E_0(u)+\frac{dA}{dk}E_{-1}(u)\right\},
\end{equation}
where $\varphi_k=\varphi_{c(k)}$ and $A(k)=\frac{1}{L}\int_0^L\varphi_k^2(x)dx$. We note that the strategy established in \cite{andrade-pastor} is a generalization of the results in \cite{johnson09}.  The assumptions to obtain the orbital stability of $\varphi_k$ in the sense of Definition $\ref{defi1}$ and by depending of the parameter $k$  are the following:
\begin{enumerate}
\item[$(P0)$]\;\;There is a smooth curve of periodic solutions for (\ref{travkdv}) in the form,
$$
 k\in J\subset \mathbb R\to \varphi_k\in H^n_{per}([0,L])\cap H_0,\qquad n\in \mathbb N;
 $$
\item[$(P1)$] $ ker(\mathcal L)=[\frac{d}{dx} \varphi_k]$;
\item[$(P2)$] $ \mathcal L$ has an unique negative
eigenvalue $\lambda$, it which is simple;
\item[$(P3)$] $ \displaystyle\left\langle \mathcal{L}\left(\frac{\partial \varphi_k}{\partial k}\right),\left(\frac{\partial \varphi_k}{\partial k}\right)\right\rangle <0$.
\end{enumerate}

Conditions $(P0)-(P1)-(P2)$  have been established for us  in the Theorems \ref{exis} and \ref{ilw.spec} above. With regard to the condition $(P_3)$, if we derivate the equation in  (\ref{travkdv}) with regard to $k$ is obtained the relation
$$
\mathcal{L}\left(\frac{\partial \varphi_k}{\partial k}\right)=-\frac{dc}{dk} \varphi_k- \frac{dA}{dk}=-M_k'(\varphi_k).
$$
Thus, by Proposition \ref{form1}, Remark 4.1 and $\varphi_k\in H_0$ we obtain for  every $k$ such that $c=c(k)\neq 0$,
\begin{equation}\label{propr10}
\displaystyle\left\langle \mathcal{L}\left(\frac{\partial \varphi_k}{\partial k}\right),\left(\frac{\partial \varphi_k}{\partial k}\right)\right\rangle=-\left\langle M_k'(\varphi_k),\frac{\partial \varphi_k}{\partial k}\right\rangle=
-\frac{1}{2}\frac{dc}{dk}\frac{d}{dk}\int_0^L\varphi_k^2(x)dx<0.
\end{equation}

The main Theorem of this section is the following.

\begin{teo}\label{nonlinear} Let $k\in (0,k_1)$ be fixed such that $c=c(k)\neq 0$. Then the periodic wave  $\varphi_k=\varphi_{c(k)}$ in (\ref{solilw})   is orbitally stable by the periodic flow of the equation $(\ref{ILW})$ in the sense of Definition $\ref{defi1}$.
\end{teo}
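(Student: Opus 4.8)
The plan is to run a Lyapunov argument of Grillakis--Shatah--Strauss type, adapted as in Andrade and Pastor \cite{andrade-pastor} to the feature that the integration constant $A=A(k)$ in (\ref{travkdv}) varies with the wave parameter $k$. First I would introduce the functional
\[
\Phi_k(u)=E_1(u)+c(k)\,E_0(u)+A(k)\,E_{-1}(u),
\]
and note that (\ref{travkdv}) is exactly the Euler--Lagrange identity $\Phi_k'(\varphi_k)=0$, while $\Phi_k''(\varphi_k)=\mathcal L$, the operator studied in section 4. Since $E_0$ and $E_{-1}$ are conserved by Theorem \ref{teoWP} and $c(k),A(k)$ are fixed constants, the combination $M_k(u)=\frac{dc}{dk}E_0(u)+\frac{dA}{dk}E_{-1}(u)$ defining the manifold $\Sigma_k$ in (\ref{var1}) is also conserved, so $\Sigma_k$ is invariant under the flow. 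The reason for using precisely this single combination of the two ``extra'' conserved quantities, rather than $E_0$ and $E_{-1}$ separately, is that differentiating (\ref{travkdv}) in $k$ gives $\mathcal L(\partial_k\varphi_k)=-M_k'(\varphi_k)$ with $M_k'(\varphi_k)=\frac{dc}{dk}\varphi_k+\frac{dA}{dk}$, so the constraint $\Sigma_k$ is tuned exactly to the ``bad'' direction of $\mathcal L$; moreover $M_k'(\varphi_k)\neq0$ because $\frac{dc}{dk}\neq0$ and $\varphi_k$ is nonconstant.

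The analytic core is a constrained coercivity estimate for $\mathcal L$. Combining $(P1)$--$(P2)$ from Theorem \ref{ilw.spec} with the identity $\mathcal L(\partial_k\varphi_k)=-M_k'(\varphi_k)$ and property $(P3)$, established in (\ref{propr10}) for every $k$ with $c(k)\neq0$, I would deduce
\[
\langle\mathcal L^{-1}M_k'(\varphi_k),M_k'(\varphi_k)\rangle=\langle\mathcal L\,\partial_k\varphi_k,\partial_k\varphi_k\rangle<0,
\]
checking in addition that $M_k'(\varphi_k)\perp\ker(\mathcal L)=[\varphi_k']$, since $\varphi_k'\perp1$ and $\varphi_k'\perp\varphi_k$ (the latter because $\varphi_k$ is $L$-periodic). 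The classical abstract lemma underlying \cite{grillakis1} and \cite{andrade-pastor} then yields a constant $\beta>0$ with $\langle\mathcal L v,v\rangle\geq\beta\|v\|_{\mathcal W}^2$ for every $v$ in the codimension-two subspace $\{v\in\mathcal W:\ \langle v,\varphi_k'\rangle=\langle v,M_k'(\varphi_k)\rangle=0\}$; the positivity of $\mathcal L$ bounded away from zero required here is supplied by Theorem \ref{teonatali} through Theorem \ref{ilw.spec}. I would also record that $\mathcal W\hookrightarrow L^3_{per}([0,L])$, so that $\Phi_k$ and $M_k$ are continuous in the $\mathcal W$-norm and, $\Phi_k$ being quadratic-plus-cubic in $u$,
\[
\Phi_k(\varphi_k+v)-\Phi_k(\varphi_k)=\tfrac12\langle\mathcal L v,v\rangle-\tfrac13\!\int_0^L v^3\,dx,
\]
with cubic remainder $O(\|v\|_{\mathcal W}^3)$.

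With these in hand I would close by the usual modulation-and-continuity scheme. Given $u_0$ with $\|u_0-\varphi_k\|_{\mathcal W}<\delta$, Theorem \ref{teoWP} gives a global solution $u(t)\in C(\mathbb R;H^s_{per})\subset C(\mathbb R;\mathcal W)$; as long as $u(t)$ stays near the orbit, the implicit function theorem produces $s(t)$ with $v(t):=u(t)(\cdot-s(t))-\varphi_k\perp\varphi_k'$. Fixing $p\in\mathcal W$ with $p\perp\varphi_k'$ and $\langle p,M_k'(\varphi_k)\rangle=1$, decompose $v(t)=a(t)\,p+w(t)$ with $w(t)$ in the codimension-two subspace above. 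Conservation of $M_k$ forces $a(t)=\langle v(t),M_k'(\varphi_k)\rangle=\big(M_k(u_0)-M_k(\varphi_k)\big)+O(\|v(t)\|_{\mathcal W}^2)$, which is small, and conservation of $\Phi_k$ gives $\Phi_k(\varphi_k+v(t))=\Phi_k(u_0)$. Expanding $\Phi_k$ at $\varphi_k$, using $\langle\mathcal L w,w\rangle\geq\beta\|w\|_{\mathcal W}^2$ and Cauchy--Schwarz on the cross terms, and absorbing the cubic remainder, one obtains for $\|v(t)\|_{\mathcal W}$ small
\[
\|v(t)\|_{\mathcal W}^2\ \leq\ C\big(\Phi_k(u_0)-\Phi_k(\varphi_k)\big)+C\big(M_k(u_0)-M_k(\varphi_k)\big)^2.
\]
Both terms on the right tend to $0$ as $\delta\to0$ by the $\mathcal W$-continuity of $\Phi_k$ and $M_k$, so a standard continuity argument in $t$ (as in \cite{grillakis1}), started from $\|v(0)\|_{\mathcal W}$ small, propagates the smallness of $\|v(t)\|_{\mathcal W}$ to all $t\in\mathbb R$; this is precisely the conclusion $\inf_{s\in\mathbb R}\|u(t)-\varphi_k(\cdot+s)\|_{\mathcal W}<\varepsilon$ of Definition \ref{defi1}.

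The step I expect to be the main obstacle is the bookkeeping forced by the varying multiplier $A=A(k)$: one must identify $M_k$ as the correct single constraint (rather than $E_0$ and $E_{-1}$ individually), keep the decomposition $v=ap+w$ uniformly nondegenerate along the flow, and ensure that the mismatch $M_k(u_0)-M_k(\varphi_k)$ enters the final estimate only at second order. This is exactly where the adaptation of \cite{andrade-pastor} over the classical theory is needed, since two independent constraints would over-determine the problem relative to the single negative direction of $\mathcal L$, while fewer would leave that direction uncontrolled.
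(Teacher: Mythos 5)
Your proposal is correct and rests on the same foundations as the paper's proof: the Lyapunov functional $\mathcal F_k=E_1+cE_0+AE_{-1}$ with $\mathcal F_k'(\varphi_k)=0$ and $\mathcal F_k''(\varphi_k)=\mathcal L$, the single tuned constraint $M_k=\frac{dc}{dk}E_0+\frac{dA}{dk}E_{-1}$ with $\mathcal L(\partial_k\varphi_k)=-M_k'(\varphi_k)$, the spectral data $(P1)$--$(P2)$ from Theorem \ref{ilw.spec}, the negativity $(P3)$ from (\ref{propr10}), and the resulting coercivity of $\mathcal L$ on $\{\varphi_k',M_k'(\varphi_k)\}^{\perp}$ (the paper's Lemma \ref{l.orb.ilw.3}). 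Where you diverge is the endgame. The paper proves the quadratic lower bound $\mathcal F_k(u)-\mathcal F_k(\varphi_k)\geq C\rho(u,\varphi_k)^2$ only for $u$ lying exactly on the manifold $\Sigma_k$, and then handles initial data off $\Sigma_k$ by the Andrade--Pastor rescaling device: in a contradiction argument it produces scalars $\alpha_n>0$ with $\alpha_n u_n(\cdot,t_n)\in\Sigma_k$, shows $\alpha_n\to1$ (using $\frac{dc}{dk}E_0(\varphi_k)\neq0$ and $\int_0^L\varphi_k=0$), and transfers the estimate back. You instead decompose $v=ap+w$ with $w$ in the coercive subspace, use conservation of $M_k$ to show $a=\bigl(M_k(u_0)-M_k(\varphi_k)\bigr)+\mathcal O(\|v\|_{\mathcal W}^2)$, and carry the constraint mismatch through as a quadratically small term in the final inequality $\|v\|_{\mathcal W}^2\leq C\bigl(\mathcal F_k(u_0)-\mathcal F_k(\varphi_k)\bigr)+C\bigl(M_k(u_0)-M_k(\varphi_k)\bigr)^2$. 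Both closings are standard and both work here; yours is closer to the Weinstein/Bona--Souganidis--Strauss style and gives a direct (non-contradiction) continuity argument with an explicit dependence of $\varepsilon$ on $\delta$, while the paper's rescaling reduces everything to the clean constrained-minimization statement of Lemma \ref{ilw.40} at the cost of the extra bookkeeping for $\alpha_n$. The nondegeneracy inputs you need ($\frac{dc}{dk}\neq0$, $M_k'(\varphi_k)\neq0$, $M_k'(\varphi_k)\perp\ker\mathcal L$, and the embedding $\mathcal W\cong H^{1/2}_{per}\hookrightarrow L^3_{per}$ controlling the cubic remainder) are all available in the paper, so I see no gap.
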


By convenience of the reader we give a sketch  of the proof of Theorem \ref{nonlinear}. The proof of the following two Lemmas follow from the ideas in  \cite{andrade-pastor}, \cite{A}, \cite{grillakis1}, and \cite{johnson09}.

\begin{lema}\label{l.orb.ilw.2} There is $\varepsilon>0$ and a $C^1-$function, $\omega:U_{\varepsilon}(\varphi_k)\mapsto\mathbb{R}$, with
$$
U_{\varepsilon}(\varphi_k):=\{u\in \mathcal{W};\
\|u-\varphi_k\|_{\mathcal{W}}<\varepsilon\},
$$
such that
\begin{eqnarray*}\langle u(\cdot+\omega(u)),\frac{d}{dx}\varphi_k\rangle=0,\;\;\text{for all}\;\;\ u\in U_{\varepsilon}(\varphi_k).\end{eqnarray*}
\end{lema}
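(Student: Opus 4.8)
The plan is to apply the implicit function theorem to construct $\omega$ as a solution of the orthogonality constraint near $u=\varphi_k$. Define the $C^1$-map $\mathcal{G}:\mathcal{W}\times\mathbb{R}\to\mathbb{R}$ by
\[
\mathcal{G}(u,r)=\left\langle u(\cdot+r),\frac{d}{dx}\varphi_k\right\rangle=\left\langle \tau_r u,\varphi_k'\right\rangle.
\]
First I would check that $\mathcal{G}$ is well-defined and of class $C^1$ on a neighbourhood of $(\varphi_k,0)$: since $\varphi_k$ is smooth and $L$-periodic, $\varphi_k'\in\mathcal{W}$, and the pairing $\langle\cdot,\cdot\rangle$ here is the $L^2_{per}([0,L])$ inner product, which is continuous on $\mathcal{W}\times\mathcal{W}$; smoothness in $r$ comes from differentiating under the integral sign, using $\partial_r\langle\tau_r u,\varphi_k'\rangle=\langle\tau_r u,\tau_{-r}(\varphi_k')'\rangle$ (equivalently, integrating by parts to move the $r$-derivative onto $\varphi_k'$, which is legitimate because $\varphi_k''\in L^2_{per}$).

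Next I would verify the two hypotheses of the implicit function theorem at the base point $(\varphi_k,0)$. The value condition holds because $\langle\varphi_k,\varphi_k'\rangle=\frac12\int_0^L\frac{d}{dx}(\varphi_k^2)\,dx=0$ by periodicity, so $\mathcal{G}(\varphi_k,0)=0$. For the invertibility condition, compute
\[
\frac{\partial\mathcal{G}}{\partial r}(\varphi_k,0)=\left\langle \varphi_k',\varphi_k'\right\rangle=\|\varphi_k'\|_{L^2_{per}}^2>0,
\]
which is strictly positive since $\varphi_k$ is non-constant (it is a genuine periodic traveling wave, not identically zero). Hence $\partial_r\mathcal{G}(\varphi_k,0)\neq 0$, and the implicit function theorem yields $\varepsilon>0$ and a unique $C^1$-function $\omega:U_{\varepsilon}(\varphi_k)\to\mathbb{R}$ with $\omega(\varphi_k)=0$ and $\mathcal{G}(u,\omega(u))=0$ for all $u\in U_{\varepsilon}(\varphi_k)$, which is precisely the asserted identity $\langle u(\cdot+\omega(u)),\frac{d}{dx}\varphi_k\rangle=0$.

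I do not expect a serious obstacle here; this is a standard modulation-parameter construction. The only point requiring a little care is the functional-analytic bookkeeping: confirming that translation $r\mapsto\tau_r u$ is a $C^1$ (indeed smooth) map $\mathbb{R}\to\mathcal{W}$ for $u$ in a neighbourhood of $\varphi_k$ — this follows from the Fourier description of $\mathcal{W}$ in (\ref{W}), since $\widehat{\tau_r u}(m)=e^{2\pi i m r/L}\widehat{u}(m)$ and the weight $1+\theta_\delta(m)$ is $r$-independent, so differentiation in $r$ brings down a factor $\tfrac{2\pi i m}{L}$ which is controlled by the extra smoothness of $\varphi_k'$ appearing in the pairing after integration by parts — and that one may reduce everything to the base point by continuity, shrinking $\varepsilon$ if necessary so that $U_{\varepsilon}(\varphi_k)$ lies inside the domain furnished by the implicit function theorem. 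Uniqueness of $\omega$ (for $|\omega(u)|$ small) is part of the implicit function theorem's conclusion, so no extra argument is needed.
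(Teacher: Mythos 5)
Your proof is correct and is essentially the argument the paper intends: the paper itself gives no details for this lemma, merely citing \cite{andrade-pastor}, \cite{A}, \cite{grillakis1} and \cite{johnson09}, where the standard construction is exactly the implicit function theorem applied to $\mathcal{G}(u,r)=\langle u(\cdot+r),\varphi_k'\rangle$ with the nondegeneracy $\partial_r\mathcal{G}(\varphi_k,0)=\|\varphi_k'\|_{L^2}^2>0$. The only cosmetic slip is the sign/form of your displayed formula for $\partial_r\mathcal{G}$ (it should read $-\langle u(\cdot+r),\varphi_k''\rangle$ after integration by parts), but your parenthetical remark shows you have the right mechanism and it does not affect the conclusion.
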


\begin{lema}\label{l.orb.ilw.3}  We consider the conditions $(P_0)-(P_1)-(P_2)-(P_3)$ above, and  the set
 $$
 \mathcal{A}_k:=\{\Phi\in \mathcal{W};\
\langle\Phi,M_k'(\varphi_k)\rangle=\langle\Phi,\frac{d}{dx}\varphi_k\rangle=0\}.
$$
Then, there exists a constant $C>0$ such that
\begin{eqnarray*}\langle \mathcal{L} \Phi,\Phi\rangle
\geq C\|\Phi\|^2_{\mathcal{W}},\quad\ \mbox{for all}\;\;
\Phi\in\mathcal{A}_k.\end{eqnarray*}
\end{lema}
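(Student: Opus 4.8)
The plan is to prove Lemma \ref{l.orb.ilw.3} by the standard spectral decomposition argument adapted to the two-codimensional constraint set $\mathcal{A}_k$, exactly as in the Grillakis--Shatah--Strauss framework refined by Andrade and Pastor. We have at our disposal from Theorem \ref{ilw.spec} the three spectral facts: $\mathcal L$ has a single simple negative eigenvalue $-\lambda^2<0$ with positive unit eigenfunction $\chi$, $\ker(\mathcal L)=[\varphi_k']$, and the rest of the spectrum is positive and bounded away from zero. The goal is a coercivity (positivity) estimate $\langle \mathcal L\Phi,\Phi\rangle\geq C\|\Phi\|_{\mathcal W}^2$ on the codimension-two space $\mathcal A_k$ cut out by the two orthogonality conditions $\langle\Phi,M_k'(\varphi_k)\rangle=0$ and $\langle\Phi,\varphi_k'\rangle=0$.

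First I would recall the abstract positivity lemma: if $\mathcal L$ is self-adjoint with exactly one negative direction, kernel spanned by $\varphi_k'$, and the remainder of its spectrum positive and bounded below, then $\langle \mathcal L\Phi,\Phi\rangle$ is positive on a closed subspace of codimension one provided that subspace avoids the negative direction in the appropriate quantitative sense; concretely it suffices to find a single element $\chi_0$ with $\mathcal L\chi_0$ known and $\langle \mathcal L^{-1}\chi_0,\chi_0\rangle<0$ such that the constraint $\langle\Phi,\chi_0\rangle=0$ already forces positivity, and then the extra constraint $\langle\Phi,\varphi_k'\rangle=0$ simply removes the kernel. The natural choice of $\chi_0$ is $M_k'(\varphi_k)=\frac{dc}{dk}\varphi_k+\frac{dA}{dk}$, because by the computation preceding \eqref{propr10} we have $\mathcal L\left(\frac{\partial\varphi_k}{\partial k}\right)=-M_k'(\varphi_k)$, hence $\mathcal L^{-1}M_k'(\varphi_k)=-\frac{\partial\varphi_k}{\partial k}$ (modulo kernel, which is harmless since $M_k'(\varphi_k)\perp\varphi_k'$ as $\varphi_k$ is even and $\varphi_k'$ odd), and therefore
\[
\langle \mathcal L^{-1}M_k'(\varphi_k),M_k'(\varphi_k)\rangle=-\left\langle\frac{\partial\varphi_k}{\partial k},M_k'(\varphi_k)\right\rangle=\left\langle\mathcal L\left(\frac{\partial\varphi_k}{\partial k}\right),\frac{\partial\varphi_k}{\partial k}\right\rangle<0
\]
by condition $(P3)$, i.e. by \eqref{propr10}, valid precisely when $c(k)\neq 0$. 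This negativity of the bilinear form $\langle\mathcal L^{-1}\cdot,\cdot\rangle$ on the span of $M_k'(\varphi_k)$ is exactly the hypothesis needed to conclude, via the classical argument (decompose $\Phi=\alpha\chi+\beta\varphi_k'+p$ with $p$ in the positive part of the spectrum, use the constraints to bound $|\alpha|$ by $\|p\|$ and $\beta$ is free but killed by the second constraint), that $\langle\mathcal L\Phi,\Phi\rangle\geq \tilde C\|\Phi\|_{L^2}^2$ for $\Phi\in\mathcal A_k$.

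The final step is to upgrade the $L^2$-coercivity to $\mathcal W$-coercivity. Here I would use that $\mathcal L=\mathcal M_\delta+c-2\varphi_k$ with $\mathcal M_\delta$ a Fourier multiplier of symbol $1+\theta_\delta(m)$ up to bounded terms; since $\varphi_k\in L^\infty$ and $c$ is a constant, $\langle\mathcal L\Phi,\Phi\rangle\geq \|\Phi\|_{\mathcal W}^2-(\,\|2\varphi_k\|_\infty+|c|+1)\|\Phi\|_{L^2}^2$, so for any $\theta\in(0,1)$, $\langle\mathcal L\Phi,\Phi\rangle=\theta\langle\mathcal L\Phi,\Phi\rangle+(1-\theta)\langle\mathcal L\Phi,\Phi\rangle\geq \theta\|\Phi\|_{\mathcal W}^2-\theta(\|2\varphi_k\|_\infty+|c|+1)\|\Phi\|_{L^2}^2+(1-\theta)\tilde C\|\Phi\|_{L^2}^2$, and choosing $\theta$ small enough that $(1-\theta)\tilde C\geq \theta(\|2\varphi_k\|_\infty+|c|+1)$ yields $\langle\mathcal L\Phi,\Phi\rangle\geq C\|\Phi\|_{\mathcal W}^2$ with $C=\theta>0$, as desired.

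The main obstacle I expect is the clean execution of the abstract spectral lemma on a codimension-two set: one must be careful that the two functionals $M_k'(\varphi_k)$ and $\varphi_k'$ are genuinely independent constraints and that restricting to their common kernel does not accidentally lose positivity — this is where the sign condition $(P3)$, i.e. $\langle\mathcal L^{-1}M_k'(\varphi_k),M_k'(\varphi_k)\rangle<0$, does all the work, and its validity rests entirely on $\frac{dc}{dk}\frac{d}{dk}\|\varphi_k\|^2>0$, which in turn is Proposition \ref{form1} together with the numerically-verified monotonicity $c'(k)>0$. Thus the hypothesis $c(k)\neq 0$ in Theorem \ref{nonlinear} enters exactly at this point, and the proof of the lemma is complete once these spectral bookkeeping details are arranged.
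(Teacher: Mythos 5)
Your argument is correct and is essentially the proof the paper intends: the paper itself omits the details for Lemma \ref{l.orb.ilw.3}, deferring to \cite{andrade-pastor}, \cite{A}, \cite{grillakis1} and \cite{johnson09}, and your writeup is precisely the standard spectral-decomposition/coercivity argument from those references, with the correct identification $\mathcal{L}^{-1}M_k'(\varphi_k)=-\partial_k\varphi_k$ (modulo $\ker\mathcal{L}$, which is orthogonal to $M_k'(\varphi_k)$ by parity) so that $(P3)$ supplies the required sign $\langle\mathcal{L}^{-1}M_k'(\varphi_k),M_k'(\varphi_k)\rangle<0$. The final G{\aa}rding-type interpolation upgrading $L^2$-coercivity to $\mathcal{W}$-coercivity is also the standard step and is carried out correctly.
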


Now, for $u\in \mathcal{W}$ we define the pseudo-metric
$$
\rho(u,\varphi_k):=\displaystyle\inf_{r\in [0,L]}\|u-\varphi_k(\cdot+r)\|_{\mathcal{W}},
$$
it which indicates the distance between $u$ and the orbit generated by $\varphi_k$ via the translation symmetry, namely, $\Omega_k=\{\varphi_k(\cdot+r): r\in [0,L]\}$.

The following Lemma establishes the local minimal property of the profile $\varphi_k$ on the manifold $\Sigma_k$.

\begin{lema}\label{ilw.40} We consider the conditions $(P_0)-(P_1)-(P_2)-(P_3)$ above, and we define the functional
$\mathcal{F}_k=E_1+cE_0+AE_{-1}$.
Then, there exist $\varepsilon>0$ and a constant $C(\varepsilon)>0$
satisfying
\begin{eqnarray*}\mathcal{F}_k(u)-\mathcal{F}_k(\varphi_k)\geq
C(\varepsilon)\cdot[\rho(u,\varphi_k)]^2,\end{eqnarray*} for all
$u\in U_{\varepsilon}(\varphi_k)\cap \Sigma_k$.
\end{lema}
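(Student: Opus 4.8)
The plan is to run the classical Lyapunov coercivity argument (in the spirit of \cite{grillakis1} and \cite{andrade-pastor}) for the functional $\mathcal{F}_k=E_1+cE_0+AE_{-1}$, using the translation symmetry to modulate out the kernel direction $\frac{d}{dx}\varphi_k$, the constraint defining $\Sigma_k$ to control the direction $M_k'(\varphi_k)$, and the positivity estimate of Lemma \ref{l.orb.ilw.3} on the remaining subspace $\mathcal{A}_k$. First I would check that $\varphi_k$ is a critical point of $\mathcal{F}_k$ with $\mathcal{F}_k''(\varphi_k)=\mathcal{L}$: from $E_1'(u)=\mathcal{M}_\delta u-u^2$, $E_0'(u)=u$, $E_{-1}'(u)=1$ and the profile equation (\ref{travkdv}) one gets $\mathcal{F}_k'(\varphi_k)=\mathcal{M}_\delta\varphi_k-\varphi_k^2+c\varphi_k+A=0$ and $\mathcal{F}_k''(\varphi_k)=\mathcal{M}_\delta+c-2\varphi_k=\mathcal{L}$. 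Since $E_0$ and $E_{-1}$ are quadratic and linear, respectively, and the only nonlinear term of $E_1$ is cubic, the Taylor expansion around $\varphi_k$ terminates, and writing $v=u-\varphi_k$,
\be\label{expans}
\mathcal{F}_k(u)-\mathcal{F}_k(\varphi_k)=\tfrac12\langle\mathcal{L} v,v\rangle-\tfrac13\int_0^L v^3\,dx.
\ee

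Next, since $\mathcal{F}_k$, $E_0$ and $E_{-1}$ (hence $M_k$ and $\Sigma_k$) are translation invariant, I would replace $u$ by the modulated translate $\tilde u:=u(\cdot+\omega(u))$ furnished by Lemma \ref{l.orb.ilw.2}; for $\varepsilon$ small this still lies in $U_\varepsilon(\varphi_k)\cap\Sigma_k$, satisfies $\langle\tilde u,\frac{d}{dx}\varphi_k\rangle=0$, hence $v=\tilde u-\varphi_k\perp\frac{d}{dx}\varphi_k$ (using $\langle\varphi_k,\varphi_k'\rangle=0$), and $\|\tilde u-\varphi_k\|_{\mathcal{W}}\ge\rho(u,\varphi_k)$, so it suffices to bound the left side of (\ref{expans}) below by a multiple of $\|v\|_{\mathcal{W}}^2$. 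From $M_k(\tilde u)=M_k(\varphi_k)$ and the fact that $M_k$ is at most quadratic I would then extract the identity $\langle M_k'(\varphi_k),v\rangle=-\tfrac12\frac{dc}{dk}\|v\|_{L^2}^2$, so that $v$ fails to lie in $\mathcal{A}_k$ only by a quadratic error. To absorb this I take $\chi:=\frac{\partial\varphi_k}{\partial k}$, which satisfies $\langle\chi,\frac{d}{dx}\varphi_k\rangle=0$ (evenness of $\varphi_k$) and $\langle\chi,M_k'(\varphi_k)\rangle=-\langle\mathcal{L}\chi,\chi\rangle>0$ by $(P3)$ together with the relation $\mathcal{L}\chi=-M_k'(\varphi_k)$ recorded before (\ref{propr10}); then I decompose $v=v_1+\beta\chi$ with $\beta=\langle v,M_k'(\varphi_k)\rangle/\langle\chi,M_k'(\varphi_k)\rangle$, so that $v_1\in\mathcal{A}_k$ and $|\beta|\le C\|v\|_{\mathcal{W}}^2$.

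Finally I insert this decomposition into (\ref{expans}). The cross term vanishes identically, $\langle\mathcal{L} v_1,\chi\rangle=\langle v_1,\mathcal{L}\chi\rangle=-\langle v_1,M_k'(\varphi_k)\rangle=0$, so $\langle\mathcal{L} v,v\rangle=\langle\mathcal{L} v_1,v_1\rangle+\beta^2\langle\mathcal{L}\chi,\chi\rangle\ge C\|v_1\|_{\mathcal{W}}^2-C'\|v\|_{\mathcal{W}}^4$ by Lemma \ref{l.orb.ilw.3}; combining with $\|v_1\|_{\mathcal{W}}\ge\|v\|_{\mathcal{W}}-C\|v\|_{\mathcal{W}}^2$ and the cubic bound $|\int_0^L v^3\,dx|\le\|v\|_{L^3}^3\le C\|v\|_{\mathcal{W}}^3$ (the continuous embedding $\mathcal{W}\hookrightarrow H^{1/2}_{per}([0,L])\hookrightarrow L^3([0,L])$ holding because $\theta_\delta(m)$ grows linearly in $|m|$), one arrives at $\mathcal{F}_k(u)-\mathcal{F}_k(\varphi_k)\ge\tfrac{C}{4}\|v\|_{\mathcal{W}}^2-O(\|v\|_{\mathcal{W}}^3)$; shrinking $\varepsilon$ the quadratic term dominates, and since $\rho(u,\varphi_k)\le\|v\|_{\mathcal{W}}$ the claimed estimate follows.

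I expect the main obstacle to be exactly this bookkeeping around the nonlinear constraint $\Sigma_k$: one must verify that the correction $\beta\chi$ places $v_1$ \emph{exactly} in $\mathcal{A}_k$ (which is what forces the cross term to vanish, via $\mathcal{L}\chi=-M_k'(\varphi_k)$ and $v_1\perp M_k'(\varphi_k)$) while remaining only quadratically small in $\|v\|_{\mathcal{W}}$, and one must ensure the coercivity constant in Lemma \ref{l.orb.ilw.3} and the Sobolev constant are uniform, so that $\varepsilon$ can be chosen independently of the admissible function $u$. Everything else is a routine consequence of the exact expansion (\ref{expans}) and the conditions $(P0)$–$(P3)$ already verified above.
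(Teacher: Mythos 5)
Your proof is correct and runs on the same Lyapunov skeleton as the paper's (modulation via Lemma \ref{l.orb.ilw.2}, use of the constraint $M_k(u)=M_k(\varphi_k)$ to show the perturbation fails to lie in $\mathcal{A}_k$ only by a quadratically small amount, coercivity from Lemma \ref{l.orb.ilw.3}), but the decomposition of the perturbation is genuinely different. The paper splits $v=C_1M_k'(\varphi_k)+y$ with $y\in\mathcal{A}_k$, i.e.\ orthogonally along $M_k'(\varphi_k)$, deduces $C_1=\mathcal{O}(\|v\|_{\mathcal W}^2)$ from the expansion of $M_k$, and then must absorb the cross term $2C_1\langle \mathcal{L}M_k'(\varphi_k),y\rangle$ (which is $\mathcal{O}(\|v\|_{\mathcal W}^3)$, though the paper loosely records it as $\mathcal{O}(\|v\|_{\mathcal W}^2)$) into the remainder. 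You instead project obliquely along $\chi=\frac{\partial\varphi_k}{\partial k}$, a direction tailored to $\mathcal{L}$: because $\mathcal{L}\chi=-M_k'(\varphi_k)$ and $v_1\perp M_k'(\varphi_k)$, the cross term $\langle\mathcal{L}v_1,\chi\rangle$ vanishes identically, while $(P3)$ guarantees $\langle\chi,M_k'(\varphi_k)\rangle=-\langle\mathcal{L}\chi,\chi\rangle>0$ so the projection coefficient $\beta$ is well defined and $\mathcal{O}(\|v\|_{\mathcal W}^2)$. You also make every remainder explicit --- the exact cubic Taylor expansion of $\mathcal{F}_k$ and the bound $|\int_0^L v^3\,dx|\lesssim\|v\|_{\mathcal W}^3$ via $\mathcal{W}\hookrightarrow H^{1/2}_{per}\hookrightarrow L^3$ --- where the paper appeals to an unjustified $o(\|v\|_{\mathcal W}^2)$. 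Both arguments are sound; yours is somewhat more self-contained and cleaner on the cross term, at the modest price of invoking $(P3)$ and the identity $\mathcal{L}\bigl(\frac{\partial\varphi_k}{\partial k}\bigr)=-M_k'(\varphi_k)$ an extra time.
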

\begin{proof} Consider $u\in \mathcal{W}$. Since $\mathcal{F}_k$ is invariant under translations one has
$\mathcal{F}_k(u)=\mathcal{F}_k(u(\cdot+r))$, for all
$r\in\mathbb{R}$. Thus, it is sufficient to show that
$$\mathcal{F}_k(u(\cdot+\omega(u)))-\mathcal{F}_k(\varphi_k)\geq
C\cdot[\rho(u,\varphi_k)]^2,$$ where $\omega$ is the smooth function obtained in Lemma \ref{l.orb.ilw.2}. Indeed, for $u\in  \Sigma_k$ follows from Lemma
\ref{l.orb.ilw.2} that there is a constant $C_1\in\mathbb{R}$ such that
\begin{equation}v:=u(\cdot+\omega(u))-\varphi_k=C_1M_k'(\varphi_k)+y,\label{v}\end{equation} where
$y\in\mathcal{B}_k=[M_k'(\varphi_k)]^{\perp}\cap[\frac{d}{dx}\varphi_k]^{\perp}$. Next, since $M_k$ is also invariant under translations we can apply Taylor's formula to obtain
\begin{eqnarray}\label{orb.ilw.12}
M_k(u)=M_k(u(\cdot+\omega(u)))=M_k(\varphi_k)+\langle
M_k'(\varphi_k),v\rangle+\mathcal{O}(\|v\|^2_{\mathcal W}).
\end{eqnarray}
Hence, since $y\in\mathcal{B}_k$ one has $\langle M_k'(\varphi_k),v\rangle=\langle M_k'(\varphi_k),C_1 M_k'(\varphi_k)\rangle=C_1 N$, where $N$ is a constant which is associated with the wave speed $c$. Then, since $M_k(u)=M_k(\varphi_k)$ we obtain immediately from (\ref{orb.ilw.12}) that
\begin{eqnarray}\label{C1}C_1=\mathcal{O}(\|v\|^2_{\mathcal W}).
\end{eqnarray}

Now, by applying a Taylor's expansion to $\mathcal{F}_k$ around
$u(\cdot+\omega(u))=\varphi_k+v$ we obtain
\begin{eqnarray*}\mathcal{F}_k(u)-\mathcal{F}_k(\varphi_k)=\displaystyle\frac{1}{2}\langle \mathcal{L}v,v\rangle+ o(\|v\|^2_{\mathcal W}),
\end{eqnarray*}
because of $\mathcal{F}_k'(\varphi_k)=0$ and
$\mathcal{F}_k''(\varphi_k)=\mathcal{L}$. By using $(\ref{v})$ and $(\ref{C1})$ we have
$\langle
\mathcal{L}v,v\rangle=
\langle\mathcal{L}y,y\rangle+\mathcal{O}(\|v\|^2_{\mathcal W})$, and so
we conclude $\mathcal{F}_k(u)-\mathcal{F}_k(\varphi_k)=\displaystyle\frac{1}{2}
\langle
\mathcal{L}y,y\rangle+ o(\|v\|^2_{\mathcal W})$. Next, since $y\in\mathcal{B}_k$, by Lemma \ref{l.orb.ilw.3} there is
$C>0$ such that $\langle\mathcal{L}y,y\rangle \geq
C\|y\|^2_{\mathcal{W}}$. Thus,
\begin{eqnarray}\label{orb.ilw.17}\mathcal{F}_k(u)-\mathcal{F}_k(\varphi_k)\geq
\widetilde{C}\|y\|^2_{\mathcal{W}}+o(\|v\|^2_{\mathcal W}),
\end{eqnarray}
where $\tilde{C}>0$. Therefore, from $(\ref{v})$ we deduce that for $\varepsilon>0$, small enough, there is $C=C(\varepsilon)>0$ such that
\begin{eqnarray*}\mathcal{F}_k(u)-\mathcal{F}_k(\varphi_k)\geq {C}\|v\|^2_{\mathcal{W}}\geq
{C}[\rho(u,\varphi_k)]^2.
\end{eqnarray*}
This finishes the proof.
\end{proof}

 \textbf{Proof of Theorem \ref{nonlinear}.} The proof of the result follows from Theorem \ref{cauchy}, Lemma \ref{ilw.40} and a convenient adaptation of  Theorem 3.5 in \cite{grillakis1} (see also \cite{andrade-pastor}). By contradiction, we can select $w_n:=u_n(\cdot,0)\in
U_{\frac{1}{n}}(\varphi_k)\cap H^{s}_{per}([0,L])$, $s>\frac{3}{2},$ and
$\varepsilon>0$, such that
$\|w_n-\varphi_k\|_{H^{s}_{per}}\stackrel{n\rightarrow\infty}{\longrightarrow}0$,
with $$\displaystyle\sup_{t\geq
0}\rho(u_n(\cdot,t),\varphi_k)\geq\varepsilon,$$ where $u_n(\cdot,t)$
is the corresponding solution of (\ref{cauchy}).
Let us consider $\varepsilon>0$ satisfying Lemma
\ref{l.orb.ilw.2}. From continuity of $u_n(t)$ at
$t\in \mathbb{R}$, we consider the smallest $t_n>0$ satisfying
\begin{eqnarray}\label{orb.ilw.20}\rho(u_n(\cdot,t_n),\varphi_k)=\displaystyle\frac{\varepsilon}{2}.
\end{eqnarray}

The following step in the analysis will be to determine the existence of $\alpha_n>0$ such that $\alpha_n u_n(\cdot, t_n)\in \Sigma_k$, for $n$ large. This is exactly the point in the theory that we will apply the strategy in \cite{andrade-pastor}. Indeed, let us define
$f_n:\mathbb{R}\rightarrow\mathbb{R}$, such that for $n$ fixed,
$$f_n(\alpha)=M_k(\alpha u_n(\cdot,t_n))=\displaystyle\frac{\alpha^2}{2}\displaystyle\frac{dc}{dk}\cdot\displaystyle\int_0^L|u_n(\cdot,t_n)|^2\ dx+\alpha\cdot\displaystyle\frac{dA}{dk}\cdot\displaystyle\int_0^Lu_n(\cdot,t_n)\ dx=:\alpha^2g_n+\alpha
h_n.$$
We note immediately that $f_n(0)=0$, $g_n>0$ and $M_k(\varphi_k)>0$. Thus, for all $n\in\mathbb{N}$ there exists
$\alpha_n>0$ such that $f_n(\alpha_n)=M_k(\varphi_k)$. In other words, there is
 $(\alpha_n)_{n\in\mathbb{N}}\subset\mathbb{R}$, satisfying
\begin{eqnarray}\label{orb.ilw.21}M_k(\alpha_nu_n(\cdot,t_n))=M_k(\varphi_k),\ \mbox{for all}\
n\in\mathbb{N},\end{eqnarray} that is,
$(\alpha_nu_n(\cdot,t_n))_{n\in\mathbb{N}}\subset \Sigma_k$.

Next, let  $\mathcal{T}_k(u):=\displaystyle\frac{dc}{dk} E_0(u)$
and $\mathcal{R}_k(u):=\displaystyle\frac{dA}{dk}
E_{-1}(u)$.  Then, since $E_0$ and $E_{-1}$ are continuous mapping
one has $\mathcal{T}_k(w_n)\longrightarrow\mathcal{T}_k(\varphi_k)=:g\neq 0$, $\mathcal{R}_k(w_n)\longrightarrow\mathcal{R}_k(\varphi_k)=:h$ and $M_k(w_n)\longrightarrow M_k(\varphi_k)$, as $n\rightarrow+\infty$.
So,
\begin{eqnarray*}\varrho_n&:=&|\alpha_n^2\mathcal{T}_k(w_n)+\alpha_n\mathcal{R}_k(w_n)-(\mathcal{T}_k(w_n)+\mathcal{R}_k(w_n))|\nonumber\\
&=&|M_k(\alpha_nu_n(\cdot,t_n))-M_k(w_n)|=|M_k(w_n)-M_k(\varphi_k)|\longrightarrow0.\nonumber
\end{eqnarray*}
as $n\rightarrow+\infty$.  On the other hand,
$$
0\leq|\alpha_n^2\mathcal{T}_k(w_n)+\alpha_n\mathcal{R}_k(w_n)-(g+h)|
\leq\varrho_n+|\mathcal{T}_k(w_n)-g|+|\mathcal{R}_k(w_n)-h|\longrightarrow 0,
$$
 that is, \begin{eqnarray}\label{orb.ilw.18}z_n:=\alpha_n^2\mathcal{T}_k(w_n)+\alpha_n\mathcal{R}_k(w_n)\longrightarrow g+h.
 \end{eqnarray}
Therefore, statement $(\ref{orb.ilw.18})$ gives us that $(\alpha_n)_{n\in\mathbb{N}}$ is a bounded sequence and therefore, modulo a subsequence, one has  $\alpha_n\longrightarrow\alpha_0$, as $ n\rightarrow+\infty$. We will see that $\alpha_0=1$.
Indeed, from (\ref{orb.ilw.18}) we get
\begin{eqnarray}\label{cond1}(1-\alpha_0)\cdot [(1+\alpha_0)\cdot
g+h]=0.
\end{eqnarray}
Now, since
$$
1+\displaystyle\frac{h}{g}=1+\displaystyle\frac{\mathcal{R}_k(\varphi_k)}{\mathcal{T}_k(\varphi_k)}=1+\displaystyle\frac{\displaystyle\frac{d
A}{d k}\displaystyle\int_0^L\varphi_k(\xi)\
d\xi}{\mathcal{T}_k(\varphi_k)}=1+\displaystyle\frac{0}{\mathcal{T}_k(\varphi_k)}=1>0,
$$
we obtain that $\alpha_0>0$. Therefore, since $g\neq 0$ follows from  $(\ref{cond1})$ that $\alpha_0=1$.\\
\indent Next, we claim that
\begin{equation}\label{af1}\rho(u_n(\cdot,t_n),\alpha_nu_n(\cdot,t_n))\longrightarrow0,\ \ \   n\rightarrow+\infty.\end{equation}
In fact, since $\rho(u_n(\cdot,t_n),\varphi_k)=\displaystyle\frac{\varepsilon}{2}$
there are $r_n\in\mathbb{R}$ and $C_2>0$ such that
\begin{eqnarray*}\|u_n(\cdot,t_n)\|_{\mathcal{W}}\leq
\|u_n(\cdot,t_n)-\varphi_k(\cdot+r_n)\|_{\mathcal{W}}+\|\varphi_k(\cdot+r_n)\|_{\mathcal{W}}<\varepsilon+\|\varphi_k(\cdot+r_n)\|_{\mathcal{W}}=C_2,\end{eqnarray*}
that is,
$\displaystyle\left(\|u_n(\cdot,t_n)\|_{\mathcal{W}}\right)_{n\in\mathbb{N}}$
is a bounded sequence. Therefore, the convergence $\alpha_n\to 1$ and the relation
\begin{equation}\label{orb.ilw.19}
\rho(u_n(\cdot,t_n),\alpha_nu_n(\cdot,t_n))\leq\|u_n(\cdot,t_n)-\alpha_nu_n(\cdot,t_n)\|_{\mathcal{W}}\leq
|1-\alpha_n|\cdot \|u_n(\cdot,t_n)\|_{\mathcal{W}},
\nonumber
\end{equation}
implies (\ref{af1}). Therefore, an application of the triangle inequality and $(\ref{orb.ilw.20})$
show that $(\alpha_nu_n(\cdot,t_n))_{n\in\mathbb{N}}\subset U_{\varepsilon}(\varphi_k)$. Hence, from Lemma \ref{ilw.40} we conclude immediately the convergence
 \begin{equation}\label{af2}\rho(\alpha_nu_n(\cdot,t_n),\varphi_k)\longrightarrow0,\ \ \ n\rightarrow+\infty.
 \end{equation}
Lastly,  by using $(\ref{af1})$ and $(\ref{af2})$ we obtain,
\begin{eqnarray*}\displaystyle\frac{\varepsilon}{2}=\rho(u_n(\cdot,t_n),\varphi_k)\leq
\rho(u_n(\cdot,t_n),\alpha_n u_n(\cdot,t_n))+\rho(\alpha_n u_n(\cdot,t_n),\varphi_k)\longrightarrow0,\ \ \ \ \ n\rightarrow+\infty,\end{eqnarray*}
which gives us a contradiction. The proof of Theorem $\ref{nonlinear}$ is now completed.
\begin{flushright}
$\square$
\end{flushright}

\begin{obs}
The positive and periodic wave $\phi_{\varsigma}$ in $(\ref{ilw.54})$ is orbitally stable by a direct application of the arguments in \cite{natali}.
\end{obs}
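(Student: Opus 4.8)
The plan is to establish orbital stability by a constrained Lyapunov--functional argument of Grillakis--Shatah--Strauss type, adapted in the manner of Andrade and Pastor \cite{andrade-pastor} to the situation where the integration constant $A=A(c)$ is not kept fixed. The Lyapunov candidate is the conserved, translation-invariant functional $\mathcal{F}_k:=E_1+cE_0+AE_{-1}$ with $c=c(k)$ and $A=A(k)$, for which $\varphi_k$ is a critical point, $\mathcal{F}_k'(\varphi_k)=0$, with Hessian $\mathcal{F}_k''(\varphi_k)=\mathcal{L}$ (this follows from $(\ref{travkdv})$ and the Hamiltonian structure of the ILW equation). Because $A$ varies with $c$ one cannot differentiate $\mathcal{F}_k$ in $c$ as in the classical theory; instead the dynamics is restricted to the codimension-one manifold $\Sigma_k=\{u\in\mathcal{W}:M_k(u)=M_k(\varphi_k)\}$, $M_k=\frac{dc}{dk}E_0+\frac{dA}{dk}E_{-1}$, and the usual slope condition $d''(c)>0$ is replaced by property $(P3)$, $\langle\mathcal{L}\partial_k\varphi_k,\partial_k\varphi_k\rangle<0$, which was verified in $(\ref{propr10})$ from Proposition \ref{form1} together with $c'(k)>0$.

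First I would assemble the three structural facts doing the work, all available from the previous sections: (i) the modulation lemma (Lemma \ref{l.orb.ilw.2}), producing a $C^1$ phase $\omega(u)$ with $\langle u(\cdot+\omega(u)),\frac{d}{dx}\varphi_k\rangle=0$ on a $\mathcal{W}$-neighborhood of $\varphi_k$; (ii) the coercivity estimate (Lemma \ref{l.orb.ilw.3}), $\langle\mathcal{L}\Phi,\Phi\rangle\geq C\|\Phi\|_{\mathcal{W}}^2$ on $\mathcal{A}_k=[M_k'(\varphi_k)]^{\perp}\cap[\frac{d}{dx}\varphi_k]^{\perp}$, which is the standard spectral computation fed by $\ker\mathcal{L}=[\frac{d}{dx}\varphi_k]$, $n(\mathcal{L})=1$ and the spectral gap of Theorem \ref{ilw.spec}, plus $(P3)$; and (iii) the local minimality of $\varphi_k$ on $\Sigma_k$ (Lemma \ref{ilw.40}), $\mathcal{F}_k(u)-\mathcal{F}_k(\varphi_k)\geq C(\varepsilon)[\rho(u,\varphi_k)]^2$ for $u\in U_{\varepsilon}(\varphi_k)\cap\Sigma_k$, obtained by writing $u(\cdot+\omega(u))-\varphi_k=C_1M_k'(\varphi_k)+y$ with $y\in\mathcal{B}_k$, showing $C_1=\mathcal{O}(\|v\|_{\mathcal{W}}^2)$ from the constraint $M_k(u)=M_k(\varphi_k)$, Taylor-expanding $\mathcal{F}_k$ about $\varphi_k+v$, and invoking (ii) on $y$.

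The theorem then follows by the usual contradiction argument combined with a rescaling onto $\Sigma_k$. Suppose $\varphi_k$ were not orbitally stable; then there are data $w_n\to\varphi_k$ in $H^s_{per}([0,L])$, $s>3/2$ (hence $w_n\to\varphi_k$ in $\mathcal{W}$ as well, since $H^s_{per}([0,L])\hookrightarrow\mathcal{W}$), solutions $u_n$ of $(\ref{ILW})$ globally defined by Theorem \ref{cauchy}, and first times $t_n>0$ with $\rho(u_n(\cdot,t_n),\varphi_k)=\varepsilon/2$. Since $\alpha\mapsto M_k(\alpha u_n(\cdot,t_n))=\alpha^2g_n+\alpha h_n$ has positive leading coefficient $g_n=\frac{dc}{dk}E_0(u_n(\cdot,t_n))>0$ and $M_k(\varphi_k)>0$, there is $\alpha_n>0$ with $\alpha_n u_n(\cdot,t_n)\in\Sigma_k$; using continuity of $E_0,E_{-1}$, the bound on $\|u_n(\cdot,t_n)\|_{\mathcal{W}}$ coming from $\rho(u_n(\cdot,t_n),\varphi_k)=\varepsilon/2$, and the identity $E_{-1}(\varphi_k)=0$ (because $\varphi_k\in H_0$), one gets $\alpha_n\to1$; hence $\rho(u_n(\cdot,t_n),\alpha_n u_n(\cdot,t_n))\to0$ and $\alpha_n u_n(\cdot,t_n)\in U_{\varepsilon}(\varphi_k)\cap\Sigma_k$ for $n$ large. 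Conservation of $\mathcal{F}_k$ and its continuity on $\mathcal{W}$ give $\mathcal{F}_k(\alpha_n u_n(\cdot,t_n))\to\mathcal{F}_k(\varphi_k)$, so Lemma \ref{ilw.40} yields $\rho(\alpha_n u_n(\cdot,t_n),\varphi_k)\to0$; the triangle inequality then forces $\varepsilon/2=\rho(u_n(\cdot,t_n),\varphi_k)\to0$, a contradiction.

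The main obstacle is the coercivity step (ii): one must combine the single negative direction and the kernel of $\mathcal{L}$ with the two orthogonality constraints defining $\mathcal{A}_k$ and the sign condition $(P3)$ to conclude that $\langle\mathcal{L}\cdot,\cdot\rangle$ is positive definite there, and then --- crucially in this non-local periodic setting --- upgrade positivity of the quadratic form to a genuine $\|\cdot\|_{\mathcal{W}}^2$ lower bound, which requires that $\mathcal{M}_{\delta}+c$ generate the $\mathcal{W}$-norm modulo the compact perturbation $-2\varphi_c$. A secondary technical point, and the reason for the hypothesis $c=c(k)\neq0$, is that $(P3)$ is obtained from $\frac{d}{dc}\|\varphi_c\|^2>0$, which Proposition \ref{form1} and the remark following it establish precisely for $c\neq0$.
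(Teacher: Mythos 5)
Your argument is a careful and essentially faithful reconstruction of the proof of Theorem \ref{nonlinear}, i.e., of the orbital stability of the \emph{mean-zero} profile $\varphi_k=\varphi_{c(k)}$ via the Andrade--Pastor constrained Lyapunov scheme (the manifold $\Sigma_k$, the functional $\mathcal{F}_k=E_1+cE_0+AE_{-1}$, property $(P3)$, and the rescaling $\alpha_n u_n(\cdot,t_n)\in\Sigma_k$). But that is not the statement you were asked to prove. The remark concerns the \emph{positive} wave $\phi_{\varsigma}=a+\varphi_c$ of (\ref{ilw.54}), which travels with the different speed $\varsigma=c+2a=\sqrt{c^2+4R}$ and solves (\ref{travkdv}) with $A\equiv 0$. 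The intended proof is genuinely different and much shorter: since $\phi_{\varsigma}$ is positive, even, solves $\mathcal{M}_{\delta}\phi_{\varsigma}+\varsigma\phi_{\varsigma}-\phi_{\varsigma}^2=0$, and has Fourier coefficients in the class $PF(2)$ discrete (all of which was established inside the proof of Theorem \ref{ilw.spec}), the Angulo--Natali machinery of \cite{natali} applies \emph{directly}: Theorem \ref{teonatali} supplies the single simple negative eigenvalue and $\ker(\mathcal{L}_{\varsigma,\delta})=[\frac{d}{dx}\phi_{\varsigma}]$, and the classical unconstrained Grillakis--Shatah--Strauss/Weinstein argument in the energy space then reduces orbital stability to a single convexity condition of the type $\frac{d}{d\varsigma}\|\phi_{\varsigma}\|^2>0$; no constraint manifold $\Sigma_k$, no functional $M_k$, and no hypothesis $(P3)$ are needed, precisely because $A\equiv 0$ for this wave.

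Your proof could in principle be salvaged by adding the missing bridge: the Galilean change of variables $\tilde{u}(x,t)=u(x+2at,t)-a$ maps solutions near $\phi_{\varsigma}$ to solutions near $\varphi_c$ and identifies the two orbit distances, so stability of $\Omega_{\varphi_c}$ would transfer to the orbit of $\phi_{\varsigma}$. You never state this step, and as written your conclusion is about the wrong orbit. Note also that this detour would import the restriction $c(k)\neq 0$ coming from Proposition \ref{form1} and (\ref{propr10}), a restriction that plays no role in the direct application of \cite{natali} to $\phi_{\varsigma}$ (there the relevant monotonicity is with respect to $\varsigma=\sqrt{c^2+4R}>0$, not with respect to $c$). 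So the concrete gap is twofold: the object of the statement ($\phi_{\varsigma}$, speed $\varsigma$, $A\equiv 0$) never appears in your argument, and the mechanism the remark invokes (the positivity/$PF(2)$ theory of \cite{natali}) is replaced by a different one designed for the mean-zero case.
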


\textbf{Acknowledgements:} The authors are grateful to two anonymous referees for their valuable suggestions and constructive comments which greatly improved both the presentation and the scope of the paper. The research of J. Angulo and F. Natali was partially supported  by Grant CNPq/Brazil. E. Cardoso Jr. was supported by CAPES/Brazil.

\end{document}